\title[minimal models of solvmanifolds with local systems]
{minimal models, formality and hard Lefschetz properties of solvmanifolds with local systems }
\author{Hisashi Kasuya}
\address[H.kasuya]{Graduate school of mathematical science \\
  University of tokyo\\
3-8-1 Komaba Meguro-ku Tokyo 153-8914\\
   Japan
}
\curraddr{}
\email{khsc@ms.u-tokyo.ac.jp}
\thanks{}
\keywords{solvmanifold, Sullivan's minimal model, formality, Hard Lefschetz property, flat connection, Higgs bundle, polycyclic group, algebraic hull}
\newcommand{\C}{\mathbb{C}}
\newcommand{\R}{\mathbb{R}}
\newcommand{\Q}{\mathbb{Q}}
\newcommand{\Z}{\mathbb{Z}}
\newcommand{\g}{\frak{g}}
\newcommand{\n}{\frak{n}}
\newcommand{\Aut}{\rm Aut}
\theoremstyle{plain}
\newtheorem{theorem}{Theorem}[section]
\theoremstyle{plain}
\newtheorem{construction}{Construction}[section]
\theoremstyle{plain}
\newtheorem{remark}{Remark}[section]
\theoremstyle{lemma}
\newtheorem{lemma}[theorem]{Lemma}
\theoremstyle{definition}
\theoremstyle{proposition}
\newtheorem{proposition}[theorem]{Proposition}
\theoremstyle{corollary}
\newtheorem{corollary}[theorem]{Corollary}
\theoremstyle{remark}
\newtheorem{example}{\bf Example}
\begin{document} 
\maketitle
\begin{abstract}
For a simply connected  solvable Lie group $G$ with a cocompact discrete subgroup $\Gamma $, we consider the space of differential forms on the solvmanifold $G/\Gamma$ with values  in certain flat bundle so that this space has a structure of a differential graded algebra(DGA).
We construct   Sullivan's minimal model of this  DGA.
This result is an extension of Nomizu's theorem  for ordinary coefficients in the nilpotent case.
By using this result, we  refine Hasegawa's result of formality of nilmanifolds  and Benson-Gordon's result of hard Lefschetz properties of nilmanifolds.
\end{abstract}
\tableofcontents
\section{Introduction}
The main purpose of this paper is to compute the de Rham cohomology of solvmanifolds with values in local coefficients associated to some diagonal representations by using of the invariant forms and the unipotent hulls.
The computations are natural extensions of Nomizu's computations of untwisted de Rham cohomology of nilmanifolds by the invariant forms in \cite{Nom}.
The computations give natural extensions of Hasegawa's result of formality of nilmanifolds (\cite{H}) and Benson and Gordon's result of hard Lefschetz properties of nilmanifolds (\cite{BG}).

First we explain the central tools of this paper called the unipotent hulls and algebraic hulls.
Let $G$ be a a simply connected solvable Lie group, there exists a unique algebraic group ${\bf H}_{G}$ called the algebraic hull of $G$ with an injection $\psi:G\to {\bf H}_{G}$ so that:\\
(1)  \ $\psi (G)$ is Zariski-dense in ${\bf H}_{G}$.\\
(2) \ The centralizer $Z_{{\bf H}_{G}}({\bf U}({\bf H}_{G}))$ of ${\bf U}({\bf H}_{G})$ is contained in  ${\bf U}({\bf H}_{G})$.\\
(3) \ $\dim {\bf U}({\bf H}_{G})=\dim G$.   \\
where we denote ${\bf U}({\bf H})$ the unipotent radical of an algebraic group $\bf H$.
We denote ${\bf U}_{G}={\bf U}({\bf H}_{G})$ and call it the unipotent hull of $G$.

We consider Hain's DGAs in \cite{Hai} which are expected to be effective techniques for studying de Rham homotopy theory of non-nilpotent spaces.
Let $M$ be a $C^{\infty}$-manifold and $\rho :\pi_{1}(M,x)\to (\C^{\ast})^{n}$ a representation and $\bf T$  the Zariski-closure of $\rho(\pi_{1}(M,x))$ in $ (\C^{\ast})^{n}$.
Let $\{V_{\alpha}\}$ be the set of one-dimensional representations for all characters $\alpha$ of $\bf T$ and $(E_{\alpha}, D_{\alpha})$ be a rank one flat bundle with the monodromy $\alpha \circ \rho$ and $A^{\ast}(M,E_{\alpha})$  the space of $E_{\rho}$-valued $C^{\infty}$-differential forms.
Denote $A^{\ast}(M, {\mathcal O}_{\rho})=\bigoplus_{\alpha} A^{\ast}(M,E_{\alpha})$ and $D=\bigoplus_{\alpha} D_{\alpha}$.
Then $(A^{\ast}(M, {\mathcal O}_{\rho}), D)$ is a cohomologically connected(i.e. the 0-th cohomology is isomorphic to the ground field) DGA.
In this paper we construct Sullivan's minimal model(\cite{Sul}) of such DGAs on solvmanifolds.

On simply connected solvable Lie groups, we consider DGAs of  left-invariant differential forms with local systems which are analogues of Hain's DGA's.
Suppose $G$ is a simply connected solvable Lie group and $\g$ is the Lie algebra of $G$.
Consider the adjoint representation ${\rm Ad}:G\to  {\rm Aut}(\g)$ and its derivation ${\rm ad}:\g\to D(\g)$ where $D(\g)$ be the Lie algebra of the derivations of $\g$.
We construct  representations of $\g$ and $G$ as following.
\begin{construction}\label{con}
Let $\n$ be the nilradical of $\g$.
There exists a subvector space (not necessarily Lie algebra) $V$ of $\g$ so that
$\g=V\oplus \n$ as the direct sum of vector spaces and for any  $A,B\in V$ $({\rm ad}_A)_{s}(B)=0$ where $(ad_A)_{s}$  is the semi-simple part of ${\rm ad}_{A}$ (see \cite[Proposition I\hspace{-.1em}I\hspace{-.1em}I.1.1] {DER}).
We define the  map ${\rm ad}_{s}:\g\to D(\g)$ as 
${\rm ad}_{sA+X}=({\rm ad}_{A})_{s}$ for $A\in V$ and $X\in \n$.
Then we have $[{\rm ad}_{s}(\g), {\rm ad}_{s}(\g)]=0$ and ${\rm ad}_{s}$ is linear (see \cite[Proposition I\hspace{-.1em}I\hspace{-.1em}I.1.1] {DER}).
Since we have $[\g,\g]\subset \n$,  the  map ${\rm ad}_{s}:\g\to D(\g)$ is a representation and the image ${\rm ad}_{s}(\g)$ is abelian and consists of semi-simple elements.
We denote by ${\rm Ad}_{s}:G\to {\rm Aut}(\g)$ the extension of ${\rm ad}_{s}$.
Then ${\rm Ad}_{s}(G)$ is diagonalizable.
\end{construction}

Let $\bf T$ be the Zariski-closure of ${\rm Ad}_{s}(G)$ in ${\rm Aut}(\g_{\C})$.
Then $\bf T$ is diagonalizable.
Let $\{ V_{\alpha}\}$ be the set of one-dimensional representations for all characters $\alpha$ of $\bf T$.
We consider $V_{\alpha}$ the representation of $\g$ which is the derivation of $\alpha\circ {\rm Ad}_{s}$.
Then we have the cochain complex of Lie algebra $(\bigwedge \g^{\ast}_{\C}\otimes V_{\alpha}, d_{\alpha})$.
Denote $A^{\ast}(\g_{\C},{\rm ad}_{s})= \bigoplus_{\alpha} \bigwedge \g^{\ast}_{\C}\otimes V_{\alpha}$ and $d=\bigoplus_{\alpha} d_{\alpha}$.
Then $(A^{\ast}(\g_{\C},{\rm ad}_{s}), d)$ is a cohomologically connected  DGA. 
In this paper we compute the cohomology of this DGA by the unipotent hull ${\bf U}_{G}$ of $G$.
Let ${\frak u}$ be the Lie algebra of ${\bf U}_{G}$ and $\bigwedge {\frak u}^{\ast}$ be the cochain complex of the dual space $\frak u^{\ast}$ of $\frak u$. We prove the following theorem.
\begin{theorem}[Theorem \ref{qua}]\label{mmmt} 
We have a quasi-isomorphism (i.e. a morphism which induces a cohomology isomorphism) of DGAs
\[\bigwedge {\frak u}^{\ast} \to A^{\ast}(\g_{\C},{\rm ad}_{s}).
\] 
Thus $\bigwedge {\frak u}^{\ast}$ is Sullivan's minimal model of $A^{\ast}(\g_{\C},{\rm ad}_{s})$.
\end{theorem}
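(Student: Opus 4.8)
The plan is to exhibit the map $\bigwedge{\frak u}^{\ast}\to A^{\ast}(\g_{\C},{\rm ad}_{s})$ as the inclusion of the subalgebra of ${\bf T}$-invariants and to prove it is a quasi-isomorphism by means of the weight decomposition under ${\bf T}$. The group ${\bf T}$ acts on $\g^{\ast}_{\C}$ through the dual of its tautological action on $\g_{\C}$ and on each $V_{\alpha}$ through $\alpha$; these actions are compatible with wedge and tensor products, so ${\bf T}$ acts on $A^{\ast}(\g_{\C},{\rm ad}_{s})$ by automorphisms of the DGA, and since $d=\bigoplus_{\alpha}d_{\alpha}$ is ${\bf T}$-equivariant it preserves the weight decomposition $A^{\ast}(\g_{\C},{\rm ad}_{s})=\bigoplus_{w}A^{\ast}_{w}$, the sum running over the characters $w$ of ${\bf T}$. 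Hence each $A^{\ast}_{w}$ is a subcomplex, the invariants $A^{\ast}_{0}$ form a sub-DGA, and $H^{\ast}(A^{\ast}(\g_{\C},{\rm ad}_{s}))=\bigoplus_{w}H^{\ast}(A^{\ast}_{w})$.

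First I would identify $A^{\ast}_{0}$ with $\bigwedge{\frak u}^{\ast}$. Fix an ${\rm Ad}_{s}$-eigenbasis $Y_{1},\dots,Y_{n}$ of $\g_{\C}$ with ${\rm Ad}_{s}(g)Y_{k}=\chi_{k}(g)Y_{k}$, and let $x_{1},\dots,x_{n}$ be the dual basis, so $x_{k}$ has weight $\chi_{k}^{-1}$. Because the characters of ${\bf T}$ form a group and $V_{\alpha}\otimes V_{\beta}=V_{\alpha\beta}$, the weight-zero monomials are exactly the products of the $n$ weight-zero $1$-forms $x_{k}\otimes v_{\chi_{k}}$ (with $v_{\chi_{k}}$ a generator of $V_{\chi_{k}}$); these freely generate $A^{\ast}_{0}$, which is therefore the Chevalley--Eilenberg complex of an $n$-dimensional Lie algebra. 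Using property (3) of the hull, $\dim{\frak u}=\dim\g=n$, and comparing the structure constants read off from $d$ with the explicit description of ${\frak u}$ coming from the construction of the hull, one identifies this Lie algebra with ${\frak u}$, so that $A^{\ast}_{0}\cong\bigwedge{\frak u}^{\ast}$ as DGAs. Since ${\frak u}$ is nilpotent, $\bigwedge{\frak u}^{\ast}$ is a minimal Sullivan algebra, which yields the final assertion once the inclusion is shown to be a quasi-isomorphism.

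The heart of the matter is the acyclicity of $A^{\ast}_{w}$ for $w\neq0$, for which I would use the vectors of the complement $V$ of Construction \ref{con}. By the defining property of $V$, ${\rm ad}_{s}(A)(B)=0$ for all $A,B\in V$, so ${\rm ad}_{s}(\g)$ annihilates $V$ and hence ${\bf T}$ fixes $V_{\C}$ pointwise; thus a basis $A_{1},\dots,A_{r}$ of $V_{\C}$ consists of weight-zero vectors, and interior multiplication $\iota_{A_{i}}$ (extended by the identity on each $V_{\alpha}$) preserves every $A^{\ast}_{w}$. A direct computation with the twisted differential gives the Cartan-type identity
\[
d\,\iota_{A_{i}}+\iota_{A_{i}}\,d=\mathcal{L}_{A_{i}}+\alpha^{\prime}(A_{i})\qquad\text{on }\bigwedge\g^{\ast}_{\C}\otimes V_{\alpha},
\]
where $\mathcal{L}_{A_{i}}$ is the Lie derivative by the inner derivation ${\rm ad}_{A_{i}}$ and $\alpha^{\prime}(A_{i})$ is the scalar by which $A_{i}$ acts on $V_{\alpha}$. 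Splitting ${\rm ad}_{A_{i}}$ into its commuting semisimple part ${\rm ad}_{s}(A_{i})$ and nilpotent part, the semisimple part together with $\alpha^{\prime}(A_{i})$ contributes on $A^{\ast}_{w}$ the scalar $\mu_{i}(w)=dw({\rm ad}_{s}(A_{i}))$, while the nilpotent part contributes a nilpotent operator $N_{i}$ commuting with $\iota_{A_{i}}$ and with $d$; thus $d\,\iota_{A_{i}}+\iota_{A_{i}}\,d=\mu_{i}(w)+N_{i}$ on $A^{\ast}_{w}$.

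It remains to find, for each nontrivial $w$, an index $i$ with $\mu_{i}(w)\neq0$; granting this, $\mu_{i}(w)+N_{i}$ is a nonzero scalar plus a nilpotent operator on the finite-dimensional space $A^{p}_{w}$, hence invertible and commuting with $d$, and $\iota_{A_{i}}\,(\mu_{i}(w)+N_{i})^{-1}$ is a contracting homotopy, giving $H^{\ast}(A^{\ast}_{w})=0$. This existence is the point I expect to be the main obstacle, and it follows from ${\bf T}$ being the Zariski-closure of ${\rm Ad}_{s}(G)$ together with the connectedness of $G$: if $dw$ vanished on all ${\rm ad}_{s}(A_{i})$, hence on ${\rm ad}_{s}(\g)={\rm ad}_{s}(V)$, then the homomorphism $w\circ{\rm Ad}_{s}:G\to\C^{\ast}$ would have trivial differential and so be trivial, forcing $w$ to be trivial on the dense subgroup ${\rm Ad}_{s}(G)$ and therefore on ${\bf T}$. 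Combining the three steps, the inclusion $\bigwedge{\frak u}^{\ast}\cong A^{\ast}_{0}\hookrightarrow A^{\ast}(\g_{\C},{\rm ad}_{s})$ induces an isomorphism on cohomology, and being a minimal Sullivan algebra, $\bigwedge{\frak u}^{\ast}$ is the minimal model of $A^{\ast}(\g_{\C},{\rm ad}_{s})$.
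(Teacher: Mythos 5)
Your proposal is correct, and its first half is the same as the paper's: identifying the ${\bf T}$-invariants (the weight-zero part) with $\bigwedge {\frak u}^{\ast}$ via an ${\rm Ad}_{s}$-eigenbasis is exactly Lemma \ref{uniso}, though in a full write-up you would still need to carry out the structure-constant comparison there, i.e. the computation using $[\g,\g]\subset \n$ showing that the elements $X_{i}-{\rm ad}_{sX_{i}}$ close up under the bracket with the same constants read off from $d$. Where you genuinely diverge is in proving that the inclusion of invariants is a quasi-isomorphism. The paper (Lemma \ref{incso}) argues on cohomology: the $G$-action on $H^{\ast}(\bigwedge \g^{\ast}_{\C}\otimes V_{\alpha})$ via ${\rm Ad}\otimes\,\alpha\circ{\rm Ad}_{s}$ extends the Lie derivation and hence is trivial; passing to semisimple parts and then to the Zariski closure, ${\bf T}$ acts trivially on cohomology, and diagonalizability then identifies $H^{\ast}$ of the invariants with the invariants of $H^{\ast}$, i.e. with all of $H^{\ast}$. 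You instead kill each nontrivial isotypic component $A^{\ast}_{w}$ directly with an explicit contracting homotopy $\iota_{A_{i}}(\mu_{i}(w)+N_{i})^{-1}$ built from Cartan's formula and the Jordan decomposition of the Lie-derivative operator, with Zariski density of ${\rm Ad}_{s}(G)$ in ${\bf T}$ entering through your nondegeneracy claim ($\mu_{i}(w)\neq 0$ for some $i$ when $w$ is nontrivial), rather than through the passage from triviality of the ${\rm Ad}_{s}(G)$-action to triviality of the ${\bf T}$-action. Both arguments rest on the same two pillars (the Cartan homotopy formula and Zariski density), but yours buys an explicit homotopy operator and avoids the somewhat delicate point, implicit in the paper, that the semisimple part of an automorphism of the complex induces on cohomology the semisimple part of the induced map; the price is operator bookkeeping, in particular the verifications $[N_{i},d]=0$ and $[N_{i},\iota_{A_{i}}]=0$, the latter of which holds precisely because $({\rm ad}_{A})_{s}(B)=0$ for $A,B\in V$ in Construction \ref{con} gives $n_{i}(A_{i})=0$ and hence $[\mathcal{L}^{n}_{i},\iota_{A_{i}}]=\iota_{n_{i}(A_{i})}=0$; you should make that verification explicit.
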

Suppose $G$ has  a lattice $\Gamma$ i.e. a cocompact discrete subgroup of $G$.
We call a compact homogeneous space $G/\Gamma$ a solvmanifold.
We have $\pi_{1}(G/\Gamma)\cong \Gamma$.
For the restriction of the semi-simple part of the adjoint representation ${\rm Ad}_{s\vert_{\Gamma}} $ on $\Gamma$, we consider Hain's DGA $A^{\ast}(G/\Gamma,{\mathcal O}_{{\rm Ad}_{s\vert_{\Gamma}}})$.
By using Theorem \ref{mmmt}, we prove:

\begin{theorem}[Corollary \ref{ISOINV}]\label{GENN1}
Let $G$ be a simply connected solvable Lie group with a lattice $\Gamma$ and ${\bf U}_{G}$ be the unipotent hull of $G$.
Let $\frak u$ be the Lie algebra of ${\bf U}_{G}$.
Then we have a quasi-isomorphism
\[\bigwedge {\frak u}^{\ast}\to A^{\ast}(G/\Gamma,{\mathcal O}_{{\rm Ad}_{s\vert_{\Gamma}}}).
\]
Thus $\bigwedge {\frak u}^{\ast}$ is Sullivan's minimal model of $A^{\ast}(G/\Gamma,{\mathcal O}_{{\rm Ad}_{s\vert_{\Gamma}}})$.
\end{theorem}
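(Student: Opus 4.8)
The plan is to factor the asserted quasi-isomorphism through Theorem \ref{mmmt}. By that theorem the natural map $\bigwedge {\frak u}^{\ast}\to A^{\ast}(\g_{\C},{\rm ad}_{s})$ is already a quasi-isomorphism and $\bigwedge {\frak u}^{\ast}$ is minimal, so it suffices to construct a morphism of DGAs
\[
\iota\colon A^{\ast}(\g_{\C},{\rm ad}_{s})\longrightarrow A^{\ast}(G/\Gamma,{\mathcal O}_{{\rm Ad}_{s\vert_{\Gamma}}})
\]
--- the descent of left-invariant forms to twisted forms on the quotient --- and to prove that $\iota$ is a quasi-isomorphism. Composing $\iota$ with the map of Theorem \ref{mmmt} and invoking the minimality established there then yields the corollary.

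First I would make $\iota$ precise and verify that it is well defined. As $G$ is simply connected it is the universal cover of $G/\Gamma$, so an $E_{\alpha}$-valued form on $G/\Gamma$ is the same as a $V_{\alpha}$-valued form $\omega$ on $G$ satisfying the automorphy condition $R_{\gamma}^{\ast}\omega=(\alpha\circ{\rm Ad}_{s\vert_{\Gamma}}(\gamma))^{-1}\omega$ for all $\gamma\in\Gamma$, where $R_{\gamma}$ denotes right translation and the scalar factor is the monodromy of $E_{\alpha}$. A left-invariant element of the summand $\bigwedge\g^{\ast}_{\C}\otimes V_{\alpha}$ transforms under right translation by the coadjoint action on $\bigwedge\g^{\ast}_{\C}$ together with $\alpha\circ{\rm Ad}_{s}$ on $V_{\alpha}$; since ${\bf T}$ is diagonalizable and each $V_{\alpha}$ is assembled from the very characters that decompose $\bigwedge\g^{\ast}_{\C}$, taking the direct sum over all characters $\alpha$ of ${\bf T}$ produces precisely the forms obeying the automorphy condition. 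The essential point is that the semisimple twisting ${\rm Ad}_{s}$ absorbs the failure of individual invariant forms to be right-$\Gamma$-invariant, so $\iota$ indeed lands in $A^{\ast}(G/\Gamma,{\mathcal O}_{{\rm Ad}_{s\vert_{\Gamma}}})$; compatibility with wedge products and with $d=\bigoplus_{\alpha}d_{\alpha}$ versus $D=\bigoplus_{\alpha}D_{\alpha}$ is then formal.

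The main work, and the step I expect to be the chief obstacle, is to show that $\iota$ induces an isomorphism in cohomology --- a twisted extension of Nomizu's theorem to the non-nilpotent, non-completely-solvable case. I would exploit the structure of the solvmanifold: by Mostow's theorem the nilradical $N$ has $N\Gamma$ closed, so $G/\Gamma$ fibers over the torus $G/N\Gamma$ with nilmanifold fiber $N/(N\cap\Gamma)$. The idea is to compare the two complexes through the resulting spectral sequence: over the nilmanifold fibers the invariant forms already compute the twisted cohomology by Nomizu's theorem (the residual monodromy being unipotent, as in the nilpotent case), while over the torus base the diagonalizable action of ${\bf T}$ together with the summation over all characters $\alpha$ forces the non-invariant contributions --- those corresponding to nontrivial characters --- to be acyclic. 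Concretely one must prove that the quotient complex $A^{\ast}(G/\Gamma,{\mathcal O}_{{\rm Ad}_{s\vert_{\Gamma}}})/\iota\bigl(A^{\ast}(\g_{\C},{\rm ad}_{s})\bigr)$ is acyclic; this is exactly where the choice of semisimple twisting and the passage to the unipotent hull are indispensable, since for untwisted coefficients the corresponding statement fails for general solvmanifolds. Granting the quasi-isomorphism of $\iota$, composition with Theorem \ref{mmmt} delivers the stated quasi-isomorphism and identifies $\bigwedge{\frak u}^{\ast}$ as Sullivan's minimal model of $A^{\ast}(G/\Gamma,{\mathcal O}_{{\rm Ad}_{s\vert_{\Gamma}}})$.
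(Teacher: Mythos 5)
Your overall skeleton coincides with the paper's: your $\iota$ is exactly the paper's inclusion $\phi: A^{\ast}(\g_{\C},{\rm ad}_{s})\to A^{\ast}(G/\Gamma,{\mathcal O}_{{\rm Ad}_{s\vert_{\Gamma}}})$ constructed in Section 6, and composing with Theorem \ref{mmmt} is indeed how one would finish. The problem is that the assertion that $\iota$ is a quasi-isomorphism \emph{is} the theorem --- everything before it is routine --- and what you offer for it is a strategy announcement, not a proof. Saying that one "must prove the quotient complex is acyclic" restates the goal; the fibration picture you invoke does not by itself deliver it, and as sketched it runs into exactly the obstruction that makes this theorem nontrivial.

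Concretely: for a fixed character $\alpha$ of $\bf T$, the Serre spectral sequence of the Mostow fibration $N/(N\cap\Gamma)\to G/\Gamma\to G/N\Gamma$ has $E_{2}^{p,q}=H^{p}(\Z^{k};H^{q}(\n_{\C})\otimes V_{\alpha})$ (fiberwise Nomizu is fine, since ${\rm Ad}_{s}$ is trivial on $N$), while the corresponding filtration of $\bigwedge\g^{\ast}_{\C}\otimes V_{\alpha}$ gives $E_{2}^{p,q}=H^{p}(\g/\n;H^{q}(\n_{\C})\otimes V_{\alpha})$. These do \emph{not} agree character by character: a weight of the $\g/\n$-action can be nonzero yet restrict to trivial monodromy on the lattice $\Z^{k}\subset\R^{k}$ (e.g. $t\mapsto e^{2\pi\sqrt{-1}t}$ on $\Z$), which is precisely why the untwisted Nomizu--Hattori statement fails for general solvmanifolds. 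Any correct argument must sum over all characters first and then regroup, and that regrouping is where the real content lies: (a) distinct characters $\alpha\neq\beta$ of $\bf T$ can satisfy $\alpha\circ{\rm Ad}_{s\vert_{\Gamma}}=\beta\circ{\rm Ad}_{s\vert_{\Gamma}}$, so $\iota$ sends the two summands $\bigwedge\g^{\ast}_{\C}\otimes V_{\alpha}$ and $\bigwedge\g^{\ast}_{\C}\otimes V_{\beta}$ into the \emph{same} summand of the target --- the paper needs the averaging-plus-Stokes argument of Proposition \ref{cohoin} just to control this --- and (b) the Mostow condition, that ${\rm Ad}(G)$ and ${\rm Ad}(\Gamma)$ have the same Zariski closure, can fail, which is why the paper does not argue on $(G,\Gamma)$ directly at all: it establishes the quasi-isomorphism under that condition via admissibility (Proposition \ref{adddm}) and Mostow's theorem (Theorem \ref{Moss}, giving Corollary \ref{ISOS}), and then removes the condition through the algebraic-hull and infra-solvmanifold machinery of Section \ref{Extens} (a finite-index subgroup $\Delta\subset\Gamma$ realized as a lattice in a possibly \emph{different} solvable group inside ${\bf H}_{\Gamma}$, followed by the invariant-theoretic reductions of Theorem \ref{INFRAI}), obtaining the present statement as Corollary \ref{GENN}. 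No substitutes for these mechanisms appear in your proposal, so the central step remains unproved.
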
 
If $G$ is nilpotent, the adjoint operator $\rm Ad$ is a unipotent representation and hence $A^{\ast}(G/\Gamma,{\mathcal O}_{{\rm Ad}_{s\vert_{\Gamma}}})=A^{\ast}_{\C}(G/\Gamma)$ and 
$A^{\ast}(\g_{\C},{\rm ad}_{s})=\bigwedge \g_{\C}=\bigwedge {\frak u}^{\ast}$.
In this case,  Theorem \ref{GENN1} reduce to the classical theorem proved by Nomizu in \cite{Nom}. 
Moreover this result gives more progressed  computations of untwisted de Rham cohomology of solvmanifolds  than the results of Mostow and Hattori (see Corollary \ref{GENN} and Section \ref{APP}).

We call a DGA $A$ formal if there exists a finite diagram of morphisms
\[A\rightarrow C_{1}\leftarrow C_{2}\cdot \cdot \cdot \leftarrow H^{\ast}(A)
\]
such that all morphisms are quasi-isomorphisms and we call manifolds $M$ formal if the de Rham complex $A^{\ast}(M)$ is formal.
In \cite{H} Hasegawa showed that formal nilmanifolds are tori.
By the results of this paper, we have a natural extension of  Hasegawa's theorem for solvmanifolds.
\begin{theorem}[Theorem \ref{SOLFORM}]\label{FO1}
Let $G$ be a simply connected solvable Lie group.
Then the following conditions are equivalent:\\
$(A)$  The DGA $A^{\ast}(\g_{\C},{\rm ad}_{s})$ is formal. \\
$(B)$   ${\bf U}_{G}$ is abelian.\\
$(C)$   $G=\R^{n}\ltimes_{\phi} \R^{m}$ such that the action $\phi:\R^{n}\to {\rm  Aut} (\R^{m})$ is semi-simple.\\
Moreover suppose $G$ has a lattice $\Gamma$.
Then the above three conditions are equivalent to the following condition:\\
$(D)$ \ $A^{\ast}(G/\Gamma,{\mathcal O}_{{\rm Ad}_{s\vert_{\Gamma}}})$ is formal.
\end{theorem}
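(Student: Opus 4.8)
The plan is to reduce the whole statement to deciding the formality of the single minimal DGA $\bigwedge \frak u^{\ast}$, and then to read off that formality directly from the Lie algebra $\frak u$. The starting point is the standard fact that formality of a DGA depends only on its minimal model. By Theorem \ref{mmmt} the minimal model of $A^{\ast}(\g_{\C},{\rm ad}_{s})$ is $\bigwedge \frak u^{\ast}$, and by Theorem \ref{GENN1} (when a lattice $\Gamma$ exists) the minimal model of $A^{\ast}(G/\Gamma,{\mathcal O}_{{\rm Ad}_{s\vert_{\Gamma}}})$ is the same $\bigwedge \frak u^{\ast}$. Hence each of $(A)$ and $(D)$ is equivalent to the assertion that $\bigwedge \frak u^{\ast}$ is formal; in particular the equivalence $(A)\Leftrightarrow(D)$ is immediate, and it remains to prove that $\bigwedge \frak u^{\ast}$ is formal if and only if $(B)$ holds, together with $(B)\Leftrightarrow(C)$.

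For $(A)\Leftrightarrow(B)$ I would proceed as follows. Since $\frak u$ is the Lie algebra of the unipotent group ${\bf U}_{G}$ it is nilpotent, and $\bigwedge \frak u^{\ast}$ is its Chevalley--Eilenberg complex, a minimal DGA generated in degree one. If ${\bf U}_{G}$ is abelian then $\frak u$ is abelian, the Chevalley--Eilenberg differential vanishes, so $\bigwedge \frak u^{\ast}$ coincides with its own cohomology and is trivially formal. Conversely, if $\frak u$ is non-abelian I would invoke the Lie-algebra form of Hasegawa's argument \cite{H}: a non-abelian nilpotent Lie algebra always carries a non-trivial triple Massey product in $H^{\ast}(\bigwedge \frak u^{\ast})$, which is impossible for a formal DGA. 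Explicitly, picking a nonzero bracket in the last nontrivial stage of the lower central series produces closed degree-one classes whose Massey product is well-defined and nonzero.

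For $(B)\Leftrightarrow(C)$ I would use the explicit realization of $\frak u$ underlying the construction of the unipotent hull, namely the nilshadow of $\g_{\C}$: it is $\g_{\C}$ equipped with the bracket obtained from $[\cdot,\cdot]_{\g}$ by subtracting the contribution of ${\rm ad}_{s}$. Writing $\g = V\oplus \n$ as in Construction \ref{con} and evaluating this twisted bracket on the summands, a short computation shows that $\frak u$ is abelian precisely when $[V,V]_{\g}=0$, when $\n$ is abelian, and when each $A\in V$ acts on $\n$ through its semisimple part, i.e. ${\rm ad}_{A}=({\rm ad}_{A})_{s}$ on $\n$. These three conditions say exactly that $\g=V\ltimes \n$ is a semidirect product of abelian Lie algebras with $V$ acting semisimply, which is condition $(C)$: $G=\R^{n}\ltimes_{\phi}\R^{m}$ with $\phi$ semisimple. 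Both implications of $(B)\Leftrightarrow(C)$ follow.

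The step I expect to be the main obstacle is $(B)\Leftrightarrow(C)$, because it depends on having the precise bracket of the unipotent hull available and on carefully tracking how the semisimple splitting of Construction \ref{con} interacts with the nilradical; the nilshadow formula must be used with the correct sign conventions so that the three summand computations close up. By contrast $(A)\Leftrightarrow(B)$ is routine once the Massey-product obstruction of Hasegawa is transported to the complex nilpotent Lie algebra $\frak u$, and $(A)\Leftrightarrow(D)$ comes for free from the shared minimal model.
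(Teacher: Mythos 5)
Your proposal is correct, and on the core equivalences it follows the same path as the paper: the paper also reduces $(A)\Leftrightarrow(B)\Leftrightarrow(D)$ to the single question of formality of $\bigwedge {\frak u}^{\ast}$, using Theorem \ref{qua} and Corollary \ref{GENN} (both DGAs share this minimal model) together with Hasegawa's theorem from \cite{H} that the Chevalley--Eilenberg complex of a nilpotent Lie algebra is formal if and only if the algebra is abelian --- exactly the Massey-product obstruction you describe, which the paper simply cites rather than re-proves. Where you genuinely diverge is $(B)\Leftrightarrow(C)$. The paper invokes Proposition \ref{abab} (from \cite{K}): its proof of $(B)\Rightarrow(C)$ does not compute the nilshadow bracket on $V\oplus\n$; instead it notes that $\bar\n$ abelian forces $\n$ abelian, hence $\g$ two-step solvable, then splits $\g=\g/\n^{\prime}\ltimes \n^{\prime}$ via the lower central series using \cite[Lemma 4.1]{De}, and extracts semisimplicity of the action from $[\bar\n,\n]=0$; the converse is Lemma \ref{semmm}. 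Your route --- identifying $\frak u$ with $\g_{\C}$ equipped with the bracket $[X,Y]_{\g}-{\rm ad}_{sX}(Y)+{\rm ad}_{sY}(X)$, which is legitimately available from Proposition \ref{u=n}, and evaluating it on the pieces of $\g=V\oplus\n$ --- is more self-contained and the computation does close up as you predict: $[A,B]_{\frak u}=[A,B]_{\g}$ for $A,B\in V$, $[A,X]_{\frak u}=({\rm ad}_{A}-({\rm ad}_{A})_{s})(X)$ for $A\in V$, $X\in\n$ (since ${\rm ad}_{sX}=0$ on $\n$), and $[X,Y]_{\frak u}=[X,Y]_{\g}$ on $\n$, so abelian-ness of $\frak u$ is precisely your three conditions, which exponentiate to $(C)$. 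The one point to handle carefully in your version is the converse $(C)\Rightarrow(B)$: for $G=\R^{n}\ltimes_{\phi}\R^{m}$ the nilradical is generally $\ker\phi_{\ast}\oplus\R^{m}$, strictly larger than $\R^{m}$, so the decomposition of Construction \ref{con} is not the given one; choosing $V$ to be a complement of $\ker\phi_{\ast}$ inside $\R^{n}$ makes your three conditions evident, and independence of ${\rm Ad}_{s}$ from the choice of $V$ (Lemma \ref{inde}) justifies working with that choice. What your approach buys is the elimination of the external reference \cite{De}; what the paper's buys is that $(B)\Leftrightarrow(C)$ comes packaged as an already-established structural fact about unipotent hulls, usable elsewhere (e.g.\ in Theorem \ref{SOLHARD} and Theorem \ref{INFORM}).
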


In \cite{BG} Benson and Gordon showed that symplectic nilmanifolds with the hard Lefschetz properties are tori.
We can also have an extension of Benson and Gordon's theorem.
\begin{theorem}[Theorem \ref{SOLHARD}]\label{Ha1}
Let $G$ be a simply connected solvable Lie group.
Suppose $\dim G=2n$ and $G$ has an $G$-invariant symplectic form $\omega$.
Then the following conditions are equivalent:\\
$(A)$  \[[\omega]^{n-i}\wedge: H^{i}(A^{\ast}(\g_{\C},{\rm ad}_{s}))\to H^{2n-i}(A^{\ast}(\g_{\C},{\rm ad}_{s}))
\]
is an isomorphic for any $i\le n$.\\
$(B)$   ${\bf U}_{G}$ is abelian.\\
$(C)$  $G=\R^{n}\ltimes_{\phi} \R^{m}$ such that the action $\phi:\R^{n}\to {\rm  Aut} (\R^{m})$ is semi-simple.

Suppose $G$ has a lattice $\Gamma$ and $G/\Gamma$ has a symplectic form(not necessarily $G$-invariant)  $\omega$.
Then the  conditions $(B)$ and $(C)$ are equivalent to the following condition:\\
$(D)$ \[[\omega]^{n-i}\wedge: H^{i}(A^{\ast}(G/\Gamma,{\mathcal O}_{{\rm Ad}_{s\vert_{\Gamma}}}))\to H^{2n-i}(A^{\ast}(G/\Gamma,{\mathcal O}_{{\rm Ad}_{s\vert_{\Gamma}}}))
\] 
is an isomorphism  for any $i\le n$ \\
\end{theorem}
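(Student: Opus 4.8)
The plan is to transport the entire question to the nilpotent Lie algebra $\frak u=\mathrm{Lie}(\U_{G})$ by means of the quasi-isomorphisms already at our disposal. Since the equivalence $(B)\Leftrightarrow(C)$ is literally the content of Theorem \ref{FO1}, I would only have to establish $(A)\Leftrightarrow(B)$ and, when a lattice is present, $(D)\Leftrightarrow(B)$. By Theorem \ref{mmmt} the morphism $\bigwedge\frak u^{\ast}\to A^{\ast}(\g_{\C},\mathrm{ad}_{s})$ induces an isomorphism of graded algebras $H^{\ast}(\frak u)\cong H^{\ast}(A^{\ast}(\g_{\C},\mathrm{ad}_{s}))$, and in the same way Theorem \ref{GENN1} gives $H^{\ast}(\frak u)\cong H^{\ast}(A^{\ast}(G/\Gamma,\mathcal{O}_{\mathrm{Ad}_{s\vert_{\Gamma}}}))$. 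Under these ring isomorphisms the symplectic class $[\omega]$ corresponds to a well-defined class $[\bar\omega]\in H^{2}(\frak u)$, and multiplicativity turns conditions $(A)$ and $(D)$ into the single assertion that $[\bar\omega]^{n-i}\wedge\colon H^{i}(\frak u)\to H^{2n-i}(\frak u)$ is an isomorphism for all $i\le n$.

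The first genuine step is to upgrade the class $[\bar\omega]$ to an actual symplectic form on $\frak u$. Because $\U_{G}$ is unipotent, $\frak u$ is nilpotent of dimension $\dim G=2n$, hence unimodular, so $H^{2n}(\frak u)\cong\C$. Applying the hard Lefschetz hypothesis at $i=0$ shows $[\bar\omega]^{n}\neq0$ in $H^{2n}(\frak u)$, whence any closed representative $\bar\omega\in\bigwedge^{2}\frak u^{\ast}$ satisfies $\bar\omega^{n}\neq0$ in $\bigwedge^{2n}\frak u^{\ast}$; thus $\bar\omega$ is nondegenerate and $(\frak u,\bar\omega)$ is a symplectic nilpotent Lie algebra. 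This reduces both $(A)$ and $(D)$ to a purely Lie-algebraic hard Lefschetz problem, to which I would apply the argument of Benson and Gordon \cite{BG} directly to $\frak u$: using Poincar\'e duality for the unimodular $\frak u$ to identify $H^{1}(\frak u)\cong(\frak u/[\frak u,\frak u])^{\ast}$ with the dual of $H^{2n-1}(\frak u)$, the Lefschetz isomorphism $[\bar\omega]^{n-1}\wedge\colon H^{1}(\frak u)\to H^{2n-1}(\frak u)$ together with the cup-product structure forces the Chevalley--Eilenberg differential on $\frak u^{\ast}$ to vanish, i.e. $[\frak u,\frak u]=0$. This gives $(B)$. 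For the converse, if $\U_{G}$ is abelian then $\frak u$ is abelian, $\bigwedge\frak u^{\ast}$ has zero differential, $H^{\ast}(\frak u)=\bigwedge\frak u^{\ast}$, and $[\bar\omega]^{n-i}\wedge$ is simply the Lefschetz operator of the symplectic vector space $(\frak u,\bar\omega)$, which is an isomorphism in every degree by symplectic linear algebra; transporting back through the quasi-isomorphisms yields $(A)$ and $(D)$.

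The step I expect to be the main obstacle is the correct transfer of the symplectic datum to $\frak u$, and in particular the nondegeneracy of $\bar\omega$, in the lattice case $(D)$. There $\omega$ is only a symplectic form on $G/\Gamma$ and need not be $G$-invariant, so it cannot be treated as a cochain in $\bigwedge\frak u^{\ast}$; the whole argument must be carried out at the level of the ring isomorphism $H^{\ast}(\frak u)\cong H^{\ast}(A^{\ast}(G/\Gamma,\mathcal{O}_{\mathrm{Ad}_{s\vert_{\Gamma}}}))$ of Theorem \ref{GENN1}, using the observation above that $[\bar\omega]^{n}\neq0$ already guarantees a nondegenerate invariant representative. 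A second delicate point is verifying that the Benson--Gordon mechanism, phrased originally for nilmanifolds, is in fact a statement about the abstract nilpotent Lie algebra underlying the minimal model, so that it applies to the unipotent hull $\U_{G}$ even in the non-nilpotent case where $\frak u$ differs from $\g$ itself.
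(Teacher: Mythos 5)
Your overall strategy coincides with the paper's: the paper deduces Theorem \ref{SOLHARD} by applying the quoted Lie-algebra version of Benson--Gordon's theorem to $\frak u$ through the quasi-isomorphisms of Theorem \ref{qua} and Corollary \ref{GENN}, with $(B)\Leftrightarrow(C)$ given by Proposition \ref{abab}. Your directions $(A)\Rightarrow(B)$ and $(D)\Rightarrow(B)$ are carried out exactly this way and are sound: Lefschetz at $i=0$ gives $[\bar\omega]^{n}\neq0$ in $H^{2n}(\frak u)\cong\C$, and Benson--Gordon (which, as the paper quotes it, is indeed a statement about the abstract nilpotent Lie algebra, so it applies to $\frak u$) forces $\frak u$ to be abelian.

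The genuine gap is in the converse $(B)\Rightarrow(A),(D)$. To invoke symplectic linear algebra on $(\frak u,\bar\omega)$ you must first know that $[\bar\omega]^{n}\neq0$, equivalently that a representative $\bar\omega\in\bigwedge^{2}\frak u^{\ast}$ is nondegenerate; but your only argument for this is ``the hard Lefschetz hypothesis at $i=0$'', which in this direction is the conclusion, not a hypothesis, and your closing sentence (``using the observation above that $[\bar\omega]^{n}\neq0$ \dots'') repeats the circle. For $(B)\Rightarrow(D)$ the gap can be closed: $\omega^{n}$ is a volume form on the compact manifold $G/\Gamma$, so Stokes' theorem gives $[\omega]^{n}\neq0$ in the untwisted de Rham cohomology $H^{2n}(G/\Gamma,\C)$, which is the trivial-character direct summand of $H^{2n}(A^{\ast}(G/\Gamma,{\mathcal O}_{{\rm Ad}_{s\vert_{\Gamma}}}))$, and this transports to $[\bar\omega]^{n}\neq0$ under Corollary \ref{GENN}. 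For $(B)\Rightarrow(A)$, however, the missing input is that $\omega^{n}$ is not exact in $\bigwedge\g_{\C}^{\ast}$; this holds when $\g$ is unimodular (automatic when a lattice exists) but fails in general. Indeed, for $G=\R\ltimes_{\phi}\R$ with $\phi(t)=e^{t}$, i.e.\ $\g=\langle X,Y\rangle$ with $[X,Y]=Y$, the action is semi-simple, so $(B)$ and $(C)$ hold, yet the invariant symplectic form $\omega=x\wedge y=d(-y)$ is exact, so $[\omega]\wedge\colon H^{0}\to H^{2}\cong\C$ is the zero map and $(A)$ fails. So in the no-lattice case the equivalence requires unimodularity of $\g$ (or taking $[\omega]^{n}\neq0$ in cohomology as a hypothesis); note that the paper's own one-line deduction leaves the same point implicit, since the quoted Benson--Gordon theorem carries $[\omega]^{n}\neq0$ as a standing assumption.
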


\begin{remark}
As a representation in an algebraic group, ${\rm Ad}_{s}$ is independent of the choice of a subvector space $V$ in Construction \ref{con} (see Lemma \ref{inde}).
By this, the structures of DGAs $A^{\ast}(\g_{\C},{\rm ad}_{s})$ and $A^{\ast}(G/\Gamma,{\mathcal O}_{{\rm Ad}_{s\vert_{\Gamma}}})$ are
independent of the choice of a subvector space $V$. 
\end{remark}

Finally we consider relations with K\"ahler geometries.
We review studies of K\"ahler structures on solvmanifolds briefly.
See \cite{BC} and \cite{Hn} for more details.
In \cite{BGs} Benson and Gordon conjectured that for a completely solvable simply connected Lie group $G$ with a lattice $\Gamma$, $G/\Gamma$ has a K\"ahler metric if and only if $G/\Gamma$ is a torus.
In \cite{Hc} Hasegawa  studied K\"ahler structures on some classes of    solvmanifolds which are not only completely solvable type and  suggested a generalized version of Benson-Gordon's conjecture: A compact
solvmanifold can have a Kahler structure if and only if it is a finite quotient of a
complex torus that is a holomorphic fiber bundle over a complex torus with fiber a
complex torus. 
In \cite{Ara} Arapura showed Benson-Gordon's conjecture and also showed that the fundamental group of a K\"ahler solvmanifold is virtually abelian by the result in \cite{AN}.
In \cite{Ara} a proof of Hasegawa's conjecture was also written
but we notice that  this proof contains a gap and Hasegawa complement in \cite{Hn}.
We also notice that Baues and Cort\'es showed a more generalized  version of Benson-Gordon's conjecture  for aspherical manifolds with polycyclic fundamental groups in \cite{BC}.

By the theory of Higgs bundle studied by Simpson, we have a twisted  analogues of formality(see \cite{DGMS}) and the hard Lefschetz properties of compact K\"ahler manifolds.
We have:
\begin{theorem}[Special case of Thoerem \ref{fol}]\label{fol1}
Suppose $M$ is a compact K\"ahler manifold with a K\"ahler form $\omega$ and $\rho:\pi_{1}(M)\to (\C^{\ast})^{n}$ is  a representation.
Then the following conditions hold:\\
{\rm (A)} (formality) The DGA  $A^{\ast}(M,{\mathcal O}_{\rho})$ is formal.\\
{\rm (B)}(hard Lefschetz) For any $0\leq i\leq n$ the  linear operator
\[ [\omega]^{n-i}\wedge: H^{i}(A^{\ast}(M,{\mathcal O}_{\rho})) \to H^{2n-i}(A^{\ast}(M,{\mathcal O}_{\rho})) \]
is an isomorphism where $\dim _{\R} M=2n$.

\end{theorem}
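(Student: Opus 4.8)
The plan is to deduce both statements from Simpson's nonabelian Hodge theory. The crucial observation is that $\rho:\pi_{1}(M)\to (\C^{\ast})^{n}$ takes values in a torus, hence in a reductive abelian group; consequently $\rho$ is a semisimple representation, and each rank-one local system $E_{\alpha}$ underlying $A^{\ast}(M,E_{\alpha})$ is a semisimple (indeed irreducible) flat bundle, with $\mathcal{O}_{\rho}=\bigoplus_{\alpha}E_{\alpha}$. By the theorem of Corlette and Simpson, each semisimple flat bundle $(E_{\alpha},D_{\alpha})$ on the compact K\"ahler manifold $M$ carries a harmonic metric. With respect to such a metric the flat connection decomposes as $D_{\alpha}=D'_{\alpha}+D''_{\alpha}$, where $D''_{\alpha}=\bar\partial_{\alpha}+\theta_{\alpha}$ is the associated Higgs operator ($\theta_{\alpha}$ the Higgs field) and $D'_{\alpha}=\partial_{\alpha}+\bar\theta_{\alpha}$ is its metric adjoint partner. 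Summing over $\alpha$ gives a type-compatible decomposition $D=D'+D''$ of the total differential on $A^{\ast}(M,\mathcal{O}_{\rho})$, with $(D')^{2}=(D'')^{2}=0$.

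Second, I would assemble the twisted K\"ahler package. Setting $D^{c}:=i(D''-D')$, the harmonic theory of Higgs bundles furnishes K\"ahler-type identities relating $L=\omega\wedge\cdot$ and its adjoint $\Lambda$ to the adjoints of $D'$ and $D''$, from which one deduces the proportionality of Laplacians $\Delta_{D}=2\Delta_{D'}=2\Delta_{D''}$, exactly as in the classical case. A short computation using $(D')^{2}=(D'')^{2}=0$ shows $DD^{c}=-D^{c}D$, so $D^{c}$ is a derivation whose kernel is a sub-DGA preserved by $D$. These identities yield two consequences: the principle of two types (the $DD^{c}$-lemma), namely that any form which is $D$-closed, $D^{c}$-closed, and exact for either operator lies in the image of $DD^{c}$; and the fact that $L$, $\Lambda$ and $H$ satisfy the $\mathfrak{sl}_{2}$ commutation relations and commute with $\Delta_{D}$.

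For part (A), formality, I would run the argument of \cite{DGMS} with $(D,D^{c})$ in place of $(d,d^{c})$. Concretely, consider the diagram of DGAs
\[
(A^{\ast}(M,\mathcal{O}_{\rho}),\,D)\ \hookleftarrow\ (\ker D^{c},\,D)\ \twoheadrightarrow\ (H_{D^{c}},\,0),
\]
where $H_{D^{c}}$ denotes cohomology with respect to $D^{c}$ and the right-hand differential vanishes because the $DD^{c}$-lemma forces $D$ to act as zero on $H_{D^{c}}$. The same lemma shows both arrows are quasi-isomorphisms and identifies $H_{D^{c}}$ with $H^{\ast}(A^{\ast}(M,\mathcal{O}_{\rho}))$ as algebras; hence $A^{\ast}(M,\mathcal{O}_{\rho})$ is formal. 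For part (B), hard Lefschetz, the $\mathfrak{sl}_{2}$-action descends to the space of $\Delta_{D}$-harmonic forms, which by Hodge theory represents $H^{\ast}(A^{\ast}(M,\mathcal{O}_{\rho}))$; standard $\mathfrak{sl}_{2}$ representation theory then shows that $L^{n-i}=[\omega]^{n-i}\wedge$ is an isomorphism from degree $i$ to degree $2n-i$ for each $i\le n$.

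The main obstacle is the second step: verifying the twisted K\"ahler identities and the proportionality of the Laplacians in the presence of the Higgs field $\theta$. This is the technical core supplied by nonabelian Hodge theory; the extra terms coming from $\theta$ and $\bar\theta$ must be shown to cancel appropriately against the connection terms, which ultimately rests on the harmonicity of the metric. Once these identities are in hand, both the formality argument and the hard Lefschetz argument are formal consequences proceeding exactly as in the untwisted K\"ahler case.
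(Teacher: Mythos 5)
Your proposal is correct and rests on the same foundation as the paper: both reduce everything to Simpson's nonabelian Hodge theory applied to the semisimple (rank-one) flat bundles $E_{\alpha}$, using the existence of harmonic metrics and the resulting decomposition $D_{\alpha}=D'_{\alpha}+D''_{\alpha}$. The difference is one of granularity rather than of route. The paper never re-derives the twisted K\"ahler identities: it quotes Simpson's Lemma 2.2 (Theorem \ref{foomm} in the paper), which says precisely that $({\rm Ker}\,D',D'')$ includes quasi-isomorphically into $(A^{\ast}(E),D)$ and maps quasi-isomorphically onto $(H^{\ast}_{D}(A^{\ast}(E)),0)$, and Simpson's Lemma 2.6 (Theorem \ref{leff}) for hard Lefschetz; so what you call the ``technical core'' --- the identities, the proportionality of Laplacians, the $DD^{c}$-lemma, the $\mathfrak{sl}_{2}$-action --- is exactly the content of the cited results, and your plan amounts to reproving them, which is legitimate but redundant. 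A cosmetic difference: the paper's formality zig-zag uses $({\rm Ker}\,D',D'')$ as the middle DGA, while you use the DGMS-style $({\rm Ker}\,D^{c},D)$ with $D^{c}=i(D''-D')$; both work. One point you gloss over and the paper treats explicitly: to make ${\rm Ker}\,D'$ (or ${\rm Ker}\,D^{c}$) a sub-DGA of $\bigoplus_{\alpha}A^{\ast}(M,E_{\alpha})$, the harmonic decompositions must be compatible with the products $A^{\ast}(M,E_{\alpha})\otimes A^{\ast}(M,E_{\beta})\to A^{\ast}(M,E_{\alpha\beta})$, i.e.\ the harmonic Higgs structure on $E_{\alpha\beta}$ must agree with the tensor product of those on $E_{\alpha}$ and $E_{\beta}$; the paper records that tensor products of harmonic Higgs bundles are harmonic, and uniqueness of harmonic metrics then gives this compatibility. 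Relatedly, your inference ``$DD^{c}=-D^{c}D$, so $D^{c}$ is a derivation'' is backwards: the Leibniz property of $D^{c}$ on the total algebra is exactly what the multiplicative compatibility buys you, and it does not follow from anticommutation with $D$. These are repairable oversights, not fatal gaps.
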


Now by Theorem \ref{fol1} formality and hard Lefschetz property of  DGA $A^{\ast}(G/\Gamma,{\mathcal O}_{{\rm Ad}_{s\vert_{\Gamma}}})$ are criteria for $G/\Gamma$ to has a K\"ahler metric.
We will see such conditions are stronger than untwisted formality and hard Lefschetz property.
\begin{remark}\label{exaaa1}
There exist examples of solvmanifolds $G/\Gamma$ which satisfy formality and the hard Lefschetz property of the untwisted de Rham complex  $A^{\ast}(G/\Gamma,)$ but do not satisfy  formality and the hard Lefschetz property of 
$A^{\ast}(G/\Gamma,{\mathcal O}_{{\rm Ad}_{s\vert_{\Gamma}}})$.
\end{remark}
However we will see these criteria can not classify K\"ahler solvmanifolds completely.
\begin{remark}\label{exaaa2}
There exist examples of non-K\"ahler solvmanifolds which satisfy formality and the hard Lefschetz property of $A^{\ast}(G/\Gamma,{\mathcal O}_{{\rm Ad}_{s\vert_{\Gamma}}}) $.
\end{remark}

\section{Preliminaries on  algebraic hulls}

Let $G$ be a discrete group (resp. a Lie group).
We call a map $\rho:G\to GL_{n}(\C)$ a representation, if $\rho$ is a  homomorphism  of  groups (resp. Lie groups).

\subsection{Algebraic groups}
In this paper an algebraic group means an affine algebraic variety $\bf G$ over $\C$ with a group structure such that the multiplication and inverse are morphisms of varieties.
All algebraic groups in this paper arise as Zariski-closed subgroups of $GL_{n}(\C)$.
Let $k$ be a subfield of $\C$.
We call $\bf G$ $k$-algebraic if $\bf G$ is defined by polynomials with coefficient in $k$.
We denote ${\bf G}(k)$  the $k$-points of ${\bf G}$.
We say that an algebraic group is diagonalizable if it is isomorphic to a closed subgroup of $(\C^{\ast})^{n}$ for some $n$. 

\subsection{Algebraic hulls}
A group $\Gamma$ is polycyclic if it admits a sequence 
\[\Gamma=\Gamma_{0}\supset \Gamma_{1}\supset \cdot \cdot \cdot \supset \Gamma_{k}=\{ e \}\]
of subgroups such that each $\Gamma_{i}$ is normal in $\Gamma_{i-1}$ and $\Gamma_{i-1}/\Gamma_{i}$ is cyclic.
For a polycyclic group $\Gamma$, we denote ${\rm rank}\,\Gamma=\sum_{i=1}^{i=k} {\rm rank}\,\Gamma_{i-1}/\Gamma_{i}$.
Let $G$ be a simply connected solvable Lie group and $\Gamma$ be a lattice in $G$.
Then $\Gamma$ is torsion-free polycyclic and $ \dim G={\rm rank}\,\Gamma$ (see \cite[Proposition 3.7]{R}).
Let $\rho :G\to GL_{n}(\C)$, for $g\in G$ be a representation.
Let $\bf G$ and $\bf G^{\prime}$ be the Zariski-closures of $\rho(G)$ and $\rho(\Gamma)$ in $GL_{n}(\C)$.
Then we have ${\bf U}({\bf G})={\bf U}({\bf G}^{\prime})$ (see \cite[Theorem 3.2]{R}).

We review the algebraic hulls.
\begin{proposition}{\rm (\cite[Proposition 4.40]{R})}\label{ttt}
Let $G$ be a simply connected solvable Lie group (resp. torsion-free 
polycyclic group).
Then there exists a unique $\R$-algebraic group ${\bf H}_{G}$ with an injective group homomorphism $\psi :G\to {\bf H}_{G}(\R)  $ 
so that:
\\
$(1)$  \ $\psi (G)$ is Zariski-dense in $\bf{H}_{G}$.\\
$(2)$ \   $Z_{{\bf H}_{G}}({\bf U}({\bf H}_{G}))\subset {\bf U}({\bf H}_{G})$.\\
$(3)$ \ $\dim {\bf U}({\bf H}_{G})$=${\rm dim}\,G$(resp. ${\rm rank}\, G$).   \\
Such ${\bf H}_{G}$ is called the algebraic hull of $G$.
\end{proposition}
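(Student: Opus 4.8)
The plan is to construct ${\bf H}_{G}$ explicitly and then deduce uniqueness from a fiber-product comparison in which conditions $(2)$ and $(3)$ are decisive. As a base case, if $G$ is simply connected nilpotent then $\exp$ is a global diffeomorphism and Malcev theory endows $G$ with a canonical structure of unipotent $\R$-algebraic group; I would take ${\bf H}_{G}=G$ and $\psi={\rm id}$. Conditions $(1)$ and $(3)$ are immediate, and $(2)$ holds because $Z_{{\bf H}_{G}}({\bf U}({\bf H}_{G}))$ is the center of $G$, itself a unipotent subgroup of ${\bf U}({\bf H}_{G})=G$.

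For general solvable $G$ I would use the semisimple splitting of Construction \ref{con}. Let ${\bf T}$ be the diagonalizable Zariski-closure of ${\rm Ad}_{s}(G)$ and let $G_{N}$ be the nilshadow, the simply connected nilpotent Lie group with the same underlying manifold as $G$ whose bracket is obtained from that of $\g$ by removing the semisimple part of the adjoint action; write ${\bf U}_{G}$ for its Malcev $\R$-algebraic hull, on which ${\bf T}$ acts through the algebraic extension of ${\rm Ad}_{s}$. I set ${\bf H}_{G}={\bf T}\ltimes{\bf U}_{G}$ and $\psi(g)=({\rm Ad}_{s}(g),\tau(g))$, where $\tau\colon G\to G_{N}$ is the identity on the underlying manifold; the cocycle identity defining the nilshadow multiplication is exactly what makes $\psi$ a homomorphism, while the property $({\rm ad}_{A})_{s}(B)=0$ on $V$ from Construction \ref{con} is what makes $G_{N}$ nilpotent and ${\rm ad}_{s}$ a representation. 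Conditions $(1)$ and $(3)$ are then routine: $\psi(G)$ projects densely onto ${\bf T}$ and onto the Zariski-dense subgroup $G_{N}$ of ${\bf U}_{G}$, hence is Zariski-dense, and since ${\bf T}$ is diagonalizable while ${\bf U}_{G}$ is normal and unipotent, ${\bf U}({\bf H}_{G})={\bf U}_{G}$, so $\dim{\bf U}({\bf H}_{G})=\dim G_{N}=\dim G$.

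The crux, and the step I expect to be the main obstacle, is condition $(2)$. An element $(t,u)$ centralizes ${\bf U}_{G}$ exactly when the automorphism of ${\bf U}_{G}$ induced by $t$ equals conjugation by $u^{-1}$. The former is semisimple on $\frak{u}={\rm Lie}\,{\bf U}_{G}$ because ${\bf T}$ is diagonalizable, the latter is unipotent because inner automorphisms of a unipotent group are unipotent, and an automorphism that is simultaneously semisimple and unipotent is the identity; hence $t$ acts trivially on ${\bf U}_{G}$. Since ${\bf T}$ is by construction a subgroup of ${\rm Aut}(\g_{\C})$ and $\frak{u}$ coincides with $\g$ as a vector space, this defining action is faithful, so $t=1$ and $(t,u)\in{\bf U}_{G}$, proving $Z_{{\bf H}_{G}}({\bf U}_{G})\subset{\bf U}_{G}$. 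Controlling the interaction between this faithfulness and the requirement that ${\bf T}$ contribute nothing to the unipotent radical is precisely where the careful choice of $V$ pays off, and the same condition $(2)$ is the linchpin of the uniqueness argument below.

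For uniqueness, given two data $({\bf H},\psi)$ and $({\bf H}^{\prime},\psi^{\prime})$ I would form $\Delta=(\psi,\psi^{\prime})\colon G\to{\bf H}\times{\bf H}^{\prime}$ and set ${\bf L}=\overline{\Delta(G)}$ with projections $p,p^{\prime}$, which are surjective by $(1)$. The reductive quotients ${\bf H}/{\bf U}({\bf H})$, ${\bf H}^{\prime}/{\bf U}({\bf H}^{\prime})$ and ${\bf L}/{\bf U}({\bf L})$ all receive dense images of $G$ through the single semisimple part ${\rm Ad}_{s}$ and are thereby identified with ${\bf T}$; therefore $p$ and $p^{\prime}$ induce isomorphisms on reductive quotients, so $K=\ker p$ and $K^{\prime}=\ker p^{\prime}$ lie in ${\bf U}({\bf L})$, and both projections restrict to surjections of ${\bf U}({\bf L})$ onto the respective unipotent radicals, which share the dimension $\dim G$ by $(3)$. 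Finally, conjugation by any $k\in K$ preserves the $p$-fibers of ${\bf U}({\bf L})$, so once Malcev rigidity of the nilpotent radical identifies ${\bf U}({\bf L})$ with the graph of an isomorphism ${\bf U}({\bf H})\cong{\bf U}({\bf H}^{\prime})$, the element $k$ centralizes ${\bf U}({\bf L})$; then condition $(2)$ applied in ${\bf H}^{\prime}$ forces $p^{\prime}(K)\subset{\bf U}({\bf H}^{\prime})$ and the dimension equality forces $K=K^{\prime}=\{1\}$. Hence $p$ and $p^{\prime}$ are isomorphisms and $p^{\prime}\circ p^{-1}\colon{\bf H}\to{\bf H}^{\prime}$ intertwines $\psi$ and $\psi^{\prime}$. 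The torsion-free polycyclic case proceeds identically, using the analogous semisimple splitting of $\Gamma$ and replacing $\dim G$ by ${\rm rank}\,\Gamma$ throughout.
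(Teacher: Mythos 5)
First, a point of reference: the paper itself gives no proof of Proposition \ref{ttt} --- it is quoted from \cite[Proposition 4.40]{R} --- and the closest it comes is the direct construction of Section 2.3, where Proposition \ref{hulll} realizes the hull of a simply connected solvable $G$ as ${\bf T}\ltimes\bar{\bf N}$. Your existence argument is essentially that construction (the same splitting $\g=V\oplus\n$ of Construction \ref{con}, the same nilshadow $\bar\n=\{X-{\rm ad}_{sX}\}$, the same semidirect product with the closure ${\bf T}$ of ${\rm Ad}_{s}(G)$), and your verification of condition $(2)$ --- an automorphism that is both semisimple and unipotent is the identity, plus faithfulness of the ${\bf T}$-action on ${\frak u}$ --- is correct. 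One local flaw: Zariski-density of $\psi(G)$ does not follow from ``$\psi(G)$ projects densely onto ${\bf T}$ and onto the dense subgroup $G_{N}$ of ${\bf U}_{G}$''; dense projections onto both factors never by themselves give density (consider the diagonal in ${\bf T}\times{\bf T}$). The paper's proof of Proposition \ref{hulll} does this step properly, using that $\psi(G)\cdot{\bf U}_{G}={\rm Ad}_{s}(G)\ltimes{\bf U}_{G}$, that $\psi(G)$ is normal in this group, and that a unipotent group has trivial image in a diagonalizable quotient.

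The genuine gap is in your uniqueness argument, at the step identifying all three reductive quotients ${\bf H}/{\bf U}({\bf H})$, ${\bf H}^{\prime}/{\bf U}({\bf H}^{\prime})$, ${\bf L}/{\bf U}({\bf L})$ with ${\bf T}$ ``through the single semisimple part ${\rm Ad}_{s}$''. For an arbitrary pair $({\bf H}^{\prime},\psi^{\prime})$ satisfying $(1)$--$(3)$, nothing proved so far relates the composite $G\to{\bf H}^{\prime}\to{\bf H}^{\prime}/{\bf U}({\bf H}^{\prime})$ to ${\rm Ad}_{s}$: that statement is precisely the paper's Lemma \ref{inde}, which the paper deduces \emph{from} the uniqueness of the hull (\cite[Lemma 4.41]{R}), so invoking it to prove uniqueness is circular. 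Without it you cannot conclude that $p,p^{\prime}$ are isomorphisms on reductive quotients, nor that $K=\ker p\subset{\bf U}({\bf L})$. The second hole is the appeal to ``Malcev rigidity'' to see ${\bf U}({\bf L})$ as the graph of an isomorphism ${\bf U}({\bf H})\cong{\bf U}({\bf H}^{\prime})$: being a graph means exactly $\ker p\cap{\bf U}({\bf L})=\{1\}$, equivalently $\dim{\bf U}({\bf L})=\dim G$, which is the thing to be proved; surjectivity of the projections only gives $\dim G\le\dim{\bf U}({\bf L})\le 2\dim G$. Your closing step does not repair this: from ``$k$ centralizes ${\bf U}({\bf L})$'' and condition $(2)$ in ${\bf H}^{\prime}$ you get $p^{\prime}(k)\in{\bf U}({\bf H}^{\prime})$, but once $K\subset{\bf U}({\bf L})$ is granted this was already known (since ${\bf U}({\bf L})\subset{\bf U}({\bf H})\times{\bf U}({\bf H}^{\prime})$), and no dimension count forcing $K=\{1\}$ has been established. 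Finally, the torsion-free polycyclic case does not ``proceed identically'': a discrete polycyclic $\Gamma$ has no Lie algebra and no ${\rm Ad}_{s}$, its semisimple splitting is a separate construction, and Zariski closures of discrete groups need not be connected, which undermines the ${\bf T}\ltimes{\bf U}$ structure theory your argument uses throughout; this is presumably why the paper cites \cite{R} here and appeals to \cite{B} in Section \ref{Extens} for the polycyclic setting.
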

We denote ${\bf U}_{G}={\bf U}({\bf H}_{G})$ and call ${\bf U}_{G}$ the unipotent hull of $G$.

\subsection{Direct constructions of algebraic hulls}

Let $\g$ be a solvable Lie algebra, and $\n=\{X\in \g\vert {\rm ad}_{X}\, \, {\rm is\,\, nilpotent}\}$.
$\n$ is the maximal nilpotent ideal of $\g$ and called the nilradical of $\g$.
 Then we have $[\g, \g]\subset \n$.
Consider the adjoint representation ${\rm ad}:\g\to D(\g)$
and the representation ${\rm ad}_{s}:\g\to D(\g)$ as Construction \ref{con}.

 Let $\bar{\g} ={\rm Im} \,{\rm ad}_{s}\ltimes\g$
and
 \[\bar{\n}=\{X-{\rm ad}_{sX}\in  \bar{\g}  \vert X\in\g\}.\]
Then we have $[\g,\g]\subset \n\subset \bar\n$ and $\bar\n$ is the nilradical of $\bar \g$ (see \cite{DER}).
Hence we have $\bar \g= {\rm Im} \,{\rm ad}_{s}\ltimes \bar{\n}$.

\begin{lemma}\label{semmm}
Suppose $\g=\R^{k}\ltimes _{\phi} \n$ such that $\phi$ is a semi-simple action and $\n$ is nilpotent.
Then $\bar \n=\R^{k}\oplus \n$.
\end{lemma}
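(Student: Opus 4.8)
The plan is to make everything explicit by choosing the subspace $V$ of Construction \ref{con} to be the abelian factor $\R^{k}$ itself, computing ${\rm ad}_{s}$ on the nose, and then identifying $\bar\n$ through a single bracket computation inside the semidirect product $\bar\g={\rm Im}\,{\rm ad}_{s}\ltimes\g$. Write elements of $\g$ as pairs $(A,X)$ with $A\in\R^{k}$ and $X\in\n$, so that ${\rm ad}_{(A,0)}$ vanishes on $\R^{k}$ and acts as $\phi(A)$ on $\n$.

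First I would check that $V=\R^{k}$ is admissible. Since $\phi$ is semi-simple, each $\phi(A)$ is a semi-simple endomorphism, so ${\rm ad}_{(A,0)}$ (block-diagonal: zero on $\R^{k}$, $\phi(A)$ on $\n$) is itself semi-simple and therefore equals its own semi-simple part. Hence $({\rm ad}_{A})_{s}(B)={\rm ad}_{A}(B)=[(A,0),(B,0)]=0$ for all $A,B\in\R^{k}$, which is exactly the condition required in Construction \ref{con}. Consequently ${\rm ad}_{s(A,X)}=({\rm ad}_{(A,0)})_{s}={\rm ad}_{(A,0)}$, the derivation that is zero on $\R^{k}$ and $\phi(A)$ on $\n$; in particular ${\rm ad}_{sZ}=0$ whenever $Z\in\n$.

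Next I would use that $\bar\n=\{Y-{\rm ad}_{sY}\mid Y\in\g\}$ and that $Y\mapsto Y-{\rm ad}_{sY}$ is a linear isomorphism of $\g=\R^{k}\oplus\n$ onto $\bar\n$; it then suffices to transport the bracket of $\bar\n$ back to $\g$ and show it becomes the direct-product bracket. This is the main step. Using that ${\rm Im}\,{\rm ad}_{s}$ is abelian (Construction \ref{con}) together with the semidirect-product bracket of $\bar\g$, for $Y_{i}=(A_{i},X_{i})$ one gets
\[
[\,Y_{1}-{\rm ad}_{sY_{1}},\,Y_{2}-{\rm ad}_{sY_{2}}\,]=-{\rm ad}_{sY_{1}}(Y_{2})+{\rm ad}_{sY_{2}}(Y_{1})+[Y_{1},Y_{2}].
\]
Since ${\rm ad}_{sY_{i}}(Y_{j})=(0,\phi(A_{i})X_{j})$ and $[Y_{1},Y_{2}]=(0,\phi(A_{1})X_{2}-\phi(A_{2})X_{1}+[X_{1},X_{2}]_{\n})$, the twisting terms $\phi(A_{1})X_{2}$ and $\phi(A_{2})X_{1}$ cancel exactly, leaving $(0,[X_{1},X_{2}]_{\n})$. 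As this lies in $\n$ its ${\rm ad}_{s}$-image vanishes, so it equals $Z-{\rm ad}_{sZ}$ with $Z=(0,[X_{1},X_{2}]_{\n})$, i.e. the image under the parametrization of $[X_{1},X_{2}]_{\n}$. Thus $\R^{k}$ is central, $\n$ is a subalgebra carrying its intrinsic bracket, and $\bar\n=\R^{k}\oplus\n$.

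I expect the only genuine obstacle to be this cancellation, which is precisely where semi-simplicity of $\phi$ enters: because ${\rm ad}_{s}$ recovers the \emph{full} action of $\R^{k}$ on $\n$ rather than a proper part of it, subtracting ${\rm ad}_{sY}$ strips away all the twisting and decouples $\R^{k}$ from $\n$. Were $\phi$ to retain a nontrivial unipotent part, that part would not be captured by ${\rm ad}_{s}$, would survive in $\bar\n$, and the conclusion would fail; the hypothesis that $\phi$ is semi-simple is exactly what excludes this.
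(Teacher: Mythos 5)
Your proof is correct and follows essentially the same route as the paper's: identify ${\rm ad}_{s}$ on $\g=\R^{k}\ltimes_{\phi}\n$ with the map $(A,X)\mapsto {\rm ad}_{(A,0)}$ (using semi-simplicity of $\phi$ to see that ${\rm ad}_{(A,0)}$ is its own semi-simple part), then compute the bracket of $Y_{1}-{\rm ad}_{sY_{1}}$ and $Y_{2}-{\rm ad}_{sY_{2}}$ in $\bar\g$ and observe that the twisting terms cancel, leaving only the intrinsic bracket of the $\n$-components, so that $Y\mapsto Y-{\rm ad}_{sY}$ is a Lie algebra isomorphism $\R^{k}\oplus\n\to\bar\n$. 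Your explicit cancellation in fact corrects what is evidently a typo in the paper's own displayed identity, whose right-hand side should be the bracket $[Y_{1},Y_{2}]$ of the two nilpotent components rather than $[X_{2},Y_{2}]$.
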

 \begin{proof}
By the assumption, for $X+Y\in \R^{k}\ltimes _{\phi} \n$, we have ${\rm ad}_{sX+Y}={\rm ad}_{X}$.
Hence we have 
\[
[X_{1}+Y_{1}-{\rm ad}_{sX_{1}+Y_{1}},X_{2}+Y_{2}-{\rm ad}_{sX_{2}+Y_{2}}]=[X_{2},Y_{2}]
\]
for $X_{1}+Y_{1}, X_{2}+Y_{2}\in \R^{k}\ltimes _{\phi} \n$.
Hence the lemma follows.
\end{proof}

Let $G$ be a simply connected solvable Lie group and $\g$ be the Lie algebra of $G$.
Let $N$ be the subgroup of $G$ which corresponds to the nilradical $\n$ of $\g$.
We consider the exponential map $\exp: \g\to G$.
In general $\exp$ is not a diffeomorphism.
But we have the useful property of $\exp$ as following.
\begin{lemma}{\rm(\cite[Lemma 3.3]{Dek})}\label{exponnn}
Let $V$ be a subvector space (not necessarily Lie algebra) $V$ of $\g$ so that
$\g=V\oplus \n$ as the direct sum of vector spaces.
We define the map $F:\g=V\oplus \n\to G$  as
$F(A+X)=\exp (A) \exp(X)$ for $A\in V$, $X\in \n$.
Then $F$ is a diffeomorphism and we have the commutative diagram
\[\xymatrix{
1 \ar[r]&N\ar[r]& G\ar[r] & G/N\cong \R^{k} \ar[r] &1 \\
0\ar[r]&	\n\ar[r] \ar^{\exp}[u] &\g\ar[r]\ar^{F}[u]&\g/\n\cong \R^{k} \ar_{\exp={\rm id}_{\R^{k}}}[u]\ar[r]&0
 }
\]
where $\dim G/N=k$.
\end{lemma}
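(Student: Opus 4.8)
The plan is to reduce the statement to the two pieces of the extension that are already well understood: the nilpotent ``fiber'' $N$ and the abelian ``base'' $G/N$. First I would record the structural facts. Since $\g$ is solvable we have $[\g,\g]\subset\n$ (noted above), so $\g/\n$ is abelian; hence $N$ is normal in $G$ and the simply connected quotient $G/N$ is abelian, so $G/N\cong\R^{k}$ with $k=\dim\g-\dim\n$, and under this identification its exponential map is the identity. Since $N$ is a simply connected nilpotent Lie group, the restricted exponential $\exp\colon\n\to N$ is a diffeomorphism. Finally, naturality of the exponential map for the projection $\pi\colon G\to G/N$ gives $\pi\circ\exp=\exp\circ\,\bar\pi$, where $\bar\pi\colon\g\to\g/\n$ is the induced map on Lie algebras.

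Next I would verify the commutative diagram. For the left square, taking $A=0$ gives $F(0+X)=\exp(X)$, which is exactly $\exp\colon\n\to N$ followed by the inclusion $N\hookrightarrow G$. For the right square, since $X\in\n$ implies $\exp(X)\in N$, we get $\pi(F(A+X))=\pi(\exp(A)\exp(X))=\pi(\exp(A))=\exp(\bar\pi(A))=\exp(\bar\pi(A+X))$, using naturality and $\bar\pi(X)=0$. After identifying both $\g/\n$ and $G/N$ with $\R^{k}$, the rightmost vertical arrow is the identity, and exactness of the two rows is immediate.

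It remains to show $F$ is a diffeomorphism, and this is where the only real work lies. The map $F$ is manifestly smooth, being a composition of $\exp$ with group multiplication, so the point is bijectivity together with smoothness of the inverse. The subtlety is that $V$ is \emph{not} a subalgebra, so $\exp(V)$ is not a subgroup and $F$ is not assembled from a single exponential (in particular $\exp(A)\exp(X)\neq\exp(A+X)$ in general); the remedy is to exploit the fibered structure of the diagram. I would proceed in two steps. Given $g\in G$, the composite $V\to G/N$, $A\mapsto\pi(\exp(A))=\exp(\bar\pi(A))$, is a diffeomorphism because $\bar\pi\vert_{V}\colon V\to\g/\n$ is a linear isomorphism and $\exp\colon\g/\n\to G/N$ is a diffeomorphism; hence there is a unique $A\in V$ with $\pi(\exp(A))=\pi(g)$, depending smoothly on $g$, and then $\exp(A)^{-1}g\in N$. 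Since $\exp\colon\n\to N$ is a diffeomorphism, there is a unique $X=\exp^{-1}(\exp(A)^{-1}g)\in\n$, again depending smoothly on $g$, with $g=\exp(A)\exp(X)$. This produces a two-sided smooth inverse to $F$, so $F$ is a diffeomorphism. The main obstacle is precisely this bijectivity-with-smooth-inverse argument: because the global $\exp$ on $\g$ is neither a homomorphism nor a diffeomorphism, one must separate the nilpotent and abelian directions and invert each piece, relying on the smooth dependence of the unique solution on $g$ at both stages.
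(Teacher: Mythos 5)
Your proof is correct, but note that there is nothing in the paper to compare it against: the paper does not prove this lemma at all, it simply quotes it from Dekimpe (\cite[Lemma 3.3]{Dek}). So your argument should be judged as a self-contained replacement for that citation, and as such it works. The decomposition you use — invert the abelian direction via $\exp\circ\,\bar\pi\vert_{V}\colon V\to G/N$ and then the nilpotent direction via $\exp\colon\n\to N$ — gives an explicit smooth two-sided inverse $g\mapsto A(g)+X(g)$, and the diagram checks (left square via $F(0+X)=\exp(X)$, right square via naturality of $\exp$ under the homomorphism $\pi$) are exactly right. Two standard facts are used silently and deserve to be flagged, since they carry the whole argument: (i) the nilradical $N$ of a simply connected solvable Lie group is a \emph{closed}, simply connected subgroup, so $\exp\colon\n\to N$ is a diffeomorphism and, crucially, $N$ is an embedded submanifold of $G$ — your smoothness claim for $X(g)=\exp^{-1}\left(\exp(A(g))^{-1}g\right)$ needs embeddedness, because it rests on the fact that a smooth map $G\to G$ with image in $N$ is smooth as a map into $N$; (ii) $G/N$ is simply connected (homotopy exact sequence of the fibration $N\to G\to G/N$) and abelian (since $[\g,\g]\subset\n$), hence a vector group whose exponential is the identity. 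Both are classical (they are in Raghunathan's book, which the paper cites elsewhere), so these are presentation gaps rather than mathematical ones; with them made explicit, your proof is complete and is essentially the natural argument one would expect behind Dekimpe's lemma.
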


By this Lemma, for $A\in V$, $X\in \n$, the extension ${\rm Ad}_{s}:G\to {\rm Aut}(\g_{\C})$ is given by
\[{\rm Ad}_{s}(\exp (A) \exp(X))=\exp( ({\rm ad}_{A})_{s})=(\exp({\rm ad}_{A}))_{s}
\]
and we have ${\rm Ad}_{s}(G)=\{(\exp({\rm ad}_{A}))_{s}\in {\rm Aut}(\g_{\C})\vert A\in V\}$.
Let $\bar G={\rm Ad}_{s}(G)\ltimes G$.
Then the Lie algebra of $\bar G$ is $\bar\g$.
For the nilradical $\bar N$ of $\bar G$, by the spritting $\bar \g= {\rm Im} \,{\rm ad}_{s}\ltimes \bar{\n}$  we have $\bar G={\rm Ad}_{s}(G)\ltimes \bar N$  such that we can regard ${\rm Ad}_{s}(G)\subset \rm Aut (\bar N)$ and  ${\rm Ad}_{s}(G)$ consists of semi-simple automorphisms of $\bar N$.
By the construction of $\bar\n$ we have  $\bar G=G\cdot \bar N$.

A simply connected nilpotent Lie group is considered as the real points of a unipotent $\R$-algebraic group (see \cite[p. 43]{OV}) by the exponential map.
We have the unipotent $\R$-algebraic group $\bf \bar{N}$ with ${\bf \bar{N}}(\R)=\bar{N}$.
We identify $\Aut_{a}({\bf \bar{N}})$ with $\Aut(\n_{\C})$ and $\Aut_{a}({\bf \bar{N}})$ has the $\R$-algebraic group structure with ${\rm Aut}_{a}({\bf \bar{N}})(\R)= {\rm Aut} (\bar N)$.
So we have the $\R$-algebraic group $ \Aut_{a} (\bf\bar{N})\ltimes \bar{N}$.
By ${\rm Ad}_{s}(G)\ltimes G={\rm Ad}_{s}(G)\ltimes \bar N$,
we have the injection $I:G\to {\rm Aut} (\bar N)\ltimes \bar N =\Aut_{a} (\bf \bar{N})\ltimes \bar{N}(\R)$.
Let $\bf G$ be the Zariski-closure of $I(G)$ in $ \Aut_{a} (\bf  \bar{N})\ltimes \bar{N}$.

\begin{proposition}\label{hulll}\label{u=n}
Let $\bf T$ be the Zariski-closure of ${\rm Ad}_{s}(G)$ in ${\rm Aut}\, \bar{\bf N}$.
Then we have ${\bf G}={\bf T}\ltimes \bf \bar{N}$ and ${\bf G}$ is the algebraic hull of $G$ with the unipotent hull ${\bf U}_{G}=\bf \bar{N}$.
Hence the Lie algebra of unipotent hull ${\bf U}_{G}$ of $G$ is 
 \[\bar\n_{\C} =\{X-{\rm ad}_{sX}\in  \bar\g_{\C} \vert X\in\g_{\C}\}.\]
\end{proposition}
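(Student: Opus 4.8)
The plan is to verify directly that $\mathbf{G} = \mathbf{T}\ltimes\bar{\mathbf{N}}$ satisfies the three defining properties of the algebraic hull from Proposition \ref{ttt}, so that uniqueness forces $\mathbf{G} = \mathbf{H}_G$ and consequently $\mathbf{U}_G = \mathbf{U}(\mathbf{G}) = \bar{\mathbf{N}}$. First I would establish the semidirect product structure $\mathbf{G} = \mathbf{T}\ltimes\bar{\mathbf{N}}$. Since $\bar{N}$ is already an $\R$-algebraic subgroup of $\mathrm{Aut}_a(\bar{\mathbf{N}})\ltimes\bar{\mathbf{N}}$ (as the second factor) and $I(G) = \mathrm{Ad}_s(G)\ltimes\bar{N}$ by the identity $\mathrm{Ad}_s(G)\ltimes G = \mathrm{Ad}_s(G)\ltimes\bar{N}$ noted just before the statement, the Zariski closure $\mathbf{G}$ contains $\bar{\mathbf{N}}$ as a normal subgroup. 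Taking closures of the semidirect decomposition and using that $\mathbf{T}$ is by definition the Zariski closure of $\mathrm{Ad}_s(G)$ in $\mathrm{Aut}\,\bar{\mathbf{N}}$, one gets $\mathbf{G} = \mathbf{T}\ltimes\bar{\mathbf{N}}$.

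Next I would identify the unipotent radical. Because $\mathbf{T}$ is diagonalizable (it is the Zariski closure of $\mathrm{Ad}_s(G)$, whose elements are semisimple by Construction \ref{con}) and $\bar{\mathbf{N}}$ is unipotent, the decomposition $\mathbf{G} = \mathbf{T}\ltimes\bar{\mathbf{N}}$ is exactly a decomposition into a reductive (indeed diagonalizable) part and a unipotent normal part. Hence $\mathbf{U}(\mathbf{G}) = \bar{\mathbf{N}}$, which immediately gives property (3): $\dim\mathbf{U}(\mathbf{G}) = \dim\bar{\mathbf{N}} = \dim\bar{\n} = \dim\g = \dim G$, since $\bar{\n} = \{X - \mathrm{ad}_{sX} : X\in\g\}$ is the graph of a linear map on $\g$ and therefore has the same dimension as $\g$. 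Property (1), Zariski-density of $I(G)$, holds by construction, as $\mathbf{G}$ was defined to be the Zariski closure of $I(G)$.

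The substantive step is property (2), the condition $Z_{\mathbf{G}}(\mathbf{U}(\mathbf{G}))\subset\mathbf{U}(\mathbf{G})$; this is where I expect the real work to lie. Writing a general element of $\mathbf{G}$ as $(t,u)$ with $t\in\mathbf{T}$, $u\in\bar{\mathbf{N}}$, an element centralizes $\bar{\mathbf{N}} = \mathbf{U}(\mathbf{G})$ only if its $\mathbf{T}$-component $t$ acts trivially on $\bar{\mathbf{N}}$. I would pass to Lie algebras and analyze the action of $\mathbf{T}$ on $\bar{\n}_\C$: an element $t\in\mathbf{T}$ fixing $\bar{\mathbf{N}}$ pointwise acts as the identity on $\bar{\n}_\C$, and the key point is that property (2) of the centralizer of $\mathbf{U}(\mathbf{H}_G)$ in Construction \ref{con} — namely that the centralizer of the unipotent radical lies inside the unipotent radical — is built into the splitting $\bar{\g} = \mathrm{Im}\,\mathrm{ad}_s\ltimes\bar{\n}$. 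Concretely, since $\mathrm{Ad}_s(G)$ acts on $\bar{N}$ by semisimple automorphisms and the induced action of $\mathbf{T}$ on $\bar{\n}$ is faithful by the construction of $\mathrm{ad}_s$, any $t$ acting trivially must be the identity, forcing the centralizing element into $\bar{\mathbf{N}}$. The obstacle is to verify this faithfulness cleanly: I would argue that $\mathbf{T}\to\mathrm{Aut}(\bar{\n})$ is injective because $\mathrm{ad}_s$ separates the characters $\alpha$ indexing the weight spaces, so no nontrivial diagonalizable element can act as the identity. Once property (2) is confirmed, the three properties of Proposition \ref{ttt} hold, uniqueness of the algebraic hull applies, and the identification $\mathbf{U}_G = \bar{\mathbf{N}}$ with Lie algebra $\bar{\n}_\C$ follows.
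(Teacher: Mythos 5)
Your first step contains the decisive gap. You assert that $I(G)={\rm Ad}_{s}(G)\ltimes \bar{N}$, citing the identity ${\rm Ad}_{s}(G)\ltimes G={\rm Ad}_{s}(G)\ltimes \bar{N}$ from the paper, and from this you conclude that $\bar{\bf N}\subset {\bf G}$ holds essentially by construction. That identity, however, says only that the single group $\bar{G}$ admits both semidirect decompositions; it does not say that $I(G)$, the image of $G$ alone, is all of ${\rm Ad}_{s}(G)\ltimes \bar{N}$. In fact $I(G)$ is a graph-type subgroup of dimension $\dim G$, whereas ${\rm Ad}_{s}(G)\ltimes\bar{N}=\bar{G}$ has dimension $\dim {\rm Ad}_{s}(G)+\dim G$; concretely, $\g\cap\bar{\n}=\ker {\rm ad}_{s}=\n$, so $\bar{N}\not\subset I(G)$ whenever $G$ is not nilpotent. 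Thus the inclusion $\bar{\bf N}\subset{\bf G}$ --- that the Zariski closure of the graph $I(G)$ swallows the whole unipotent group --- is exactly the nontrivial content of the proposition, and your proposal assumes it rather than proves it.

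The paper's proof supplies precisely this missing step: ${\rm Ad}_{s}(G)$ normalizes $I(G)$ (conjugation inside $\bar{G}$), so ${\bf T}\cdot{\bf G}$ is an algebraic group and equals $\overline{{\rm Ad}_{s}(G)\cdot I(G)}=\overline{{\rm Ad}_{s}(G)\ltimes\bar{N}}={\bf T}\ltimes\bar{\bf N}$; since ${\bf G}$ is normal in ${\bf T}\cdot {\bf G}$ and the quotient $({\bf T}\ltimes\bar{\bf N})/{\bf G}$ is an image of the diagonalizable group ${\bf T}$, the (unipotent) image of $\bar{\bf N}$ in this quotient is trivial, i.e.\ $\bar{\bf N}\subset{\bf G}$; combined with $I(G)\cdot\bar{N}=\bar{G}$ this yields ${\bf G}={\bf T}\ltimes\bar{\bf N}$. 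By contrast, the step you single out as ``the substantive step'' --- the centralizer condition $(2)$ --- is routine once ${\bf G}={\bf T}\ltimes\bar{\bf N}$ is known: if $(t,u)$ centralizes $\bar{\bf N}$, then $t$ acts on $\bar{\bf N}$ as conjugation by $u^{-1}$, an automorphism that is simultaneously semisimple and unipotent, hence trivial; and since ${\bf T}$ is by definition a subgroup of ${\rm Aut}\,\bar{\bf N}$, faithfulness is automatic and $t=e$, so no character-separation argument is needed. Your dimension count $\dim \bar{\n}=\dim \g$ and the density statement are fine, but without a correct proof of $\bar{\bf N}\subset{\bf G}$ the proposal does not establish the proposition.
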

\begin{proof}
The algebraic group ${\bf T}\ltimes \bar{\bf N}$ is the Zariski-closure of ${\rm Ad}_{s}(G)\ltimes \bar{N}$ in ${\rm Aut}(\bar{N})\ltimes\bar{N}$.
By   ${\rm Ad}_{s}(G)\cdot I(G)={\rm Ad}_{s}(G)\ltimes \bar{N}$, we have $ {\bf T}\cdot {\bf G}= {\bf T}\ltimes  \bar{\bf N}$.
Since $\bf T$ is a diagonalizable algebraic group, we have $\bar{\bf N}\subset {\bf G}$.
Otherwise since   $\bf G \subset {\bf T}\ltimes \bar{\bf N}$ is a  connected solvable algebraic group, we have ${\bf U}({\bf G})=\bar{\bf N}\cap {\bf G}=\bar{\bf N}$.
Since  we have ${\rm Ad}_{s}(G)\ltimes \bar{N}=G\cdot\bar N$,
 $\bf G$ is identified with the Zariski-closure of ${\rm Ad}_{s}(G)\ltimes \bar{N}$.
Hence we have ${\bf G}={\bf T}\ltimes \bf \bar{N}$.
By $\dim G=\dim \bar N$, we can easily check that ${\bf T}\ltimes \bf \bar{N}$ is the algebraic hull of $G$.
\end{proof}
By this proposition the Zariski-closure $\bf T$ of ${\rm Ad}_{s}(G)$ is a maximal torus of the algebraic hull of $G$.
By the uniqueness of the algebraic hull (see \cite[Lemma 4.41]{R}), we have:
\begin{lemma}\label{inde}
Let ${\bf H}_{G}$ be the algebraic hull of $G$ and $q:{\bf H}_{G}\to {\bf H}_{G}/{\bf U}_{G}$ the quotient map.
Then for any injection $\psi :G\to {\bf H}_{G}(\R)  $ as in Proposition \ref{ttt}, there exists an isomorphism  $\varphi:{\bf H}_{G}/{\bf U}_{G}\to {\bf T}$ such that the diagram
\[\xymatrix{
{\bf H}_{G}/{\bf U}_{G} \ar[r]^{\varphi}&{\bf T}  \\
	G\ar^{q\circ\psi}[u]	 \ar_{{\rm Ad}_{s}}[ru]
 }
\] 
commutes.
\end{lemma}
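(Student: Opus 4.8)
The plan is to reduce the statement to the explicit model of the algebraic hull produced in Proposition \ref{hulll} together with the uniqueness statement \cite[Lemma 4.41]{R}. First I would work with the concrete hull constructed there, namely ${\bf T}\ltimes \bar{\bf N}$ with ${\bf U}_{G}=\bar{\bf N}$, equipped with its injection $I:G\to ({\bf T}\ltimes\bar{\bf N})(\R)$ and the projection $p:{\bf T}\ltimes\bar{\bf N}\to {\bf T}$ onto the first factor. By the semidirect-product structure the first component of $I(g)$ is ${\rm Ad}_{s}(g)$, so $p\circ I={\rm Ad}_{s}$; moreover $p$ realizes the quotient by $\bar{\bf N}={\bf U}_{G}$, in the sense that $p=\pi\circ q'$, where $q':{\bf T}\ltimes\bar{\bf N}\to ({\bf T}\ltimes\bar{\bf N})/\bar{\bf N}$ is the quotient map and $\pi:({\bf T}\ltimes\bar{\bf N})/\bar{\bf N}\to {\bf T}$ is the natural isomorphism. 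Thus for this particular injection the asserted diagram already commutes.

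Next, given an arbitrary injection $\psi:G\to {\bf H}_{G}(\R)$ as in Proposition \ref{ttt}, I would invoke the uniqueness of the algebraic hull. Since both $({\bf H}_{G},\psi)$ and the concrete model $({\bf T}\ltimes\bar{\bf N},I)$ satisfy conditions $(1)$--$(3)$ of Proposition \ref{ttt}, \cite[Lemma 4.41]{R} supplies an isomorphism of $\R$-algebraic groups $f:{\bf H}_{G}\to {\bf T}\ltimes\bar{\bf N}$ that intertwines the two injections, i.e. $f\circ\psi=I$.

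The remaining work is to push $f$ down to the quotient by the unipotent radical. An isomorphism of algebraic groups carries the unipotent radical to the unipotent radical, so $f({\bf U}_{G})=\bar{\bf N}$, and hence $f$ descends to an isomorphism $\bar f:{\bf H}_{G}/{\bf U}_{G}\to ({\bf T}\ltimes\bar{\bf N})/\bar{\bf N}$ satisfying $\bar f\circ q=q'\circ f$. Setting $\varphi=\pi\circ\bar f$, which is an isomorphism as a composite of isomorphisms, I obtain
\[
\varphi\circ(q\circ\psi)=\pi\circ\bar f\circ q\circ\psi=\pi\circ q'\circ f\circ\psi=p\circ I={\rm Ad}_{s},
\]
which is exactly the commutativity of the diagram.

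The only genuine subtlety, and the step I expect to carry the weight, is the correct application of the uniqueness lemma: one must be certain it yields an isomorphism \emph{compatible with the chosen injections}, not merely an abstract isomorphism of algebraic groups, since it is precisely this compatibility ($f\circ\psi=I$) that forces $\varphi$ to send $q\circ\psi$ to ${\rm Ad}_{s}$ rather than to some conjugate of it. Everything else, namely that isomorphisms preserve the unipotent radical and therefore descend to the quotient, is formal.
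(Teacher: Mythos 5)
Your proposal is correct and follows essentially the same route as the paper: the paper's proof consists precisely of observing, via Proposition \ref{hulll}, that ${\bf T}\ltimes\bar{\bf N}$ is a concrete model of the hull in which the projection to ${\bf T}$ composed with $I$ equals ${\rm Ad}_{s}$, and then invoking the uniqueness of the algebraic hull (\cite[Lemma 4.41]{R}), whose isomorphism intertwines the given injections. Your write-up merely makes explicit the descent to the quotient by the unipotent radical, which the paper leaves implicit.
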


\begin{lemma}\label{HGHG}
Let $H_{G}={\bf H}_{G}(\R)$ be the real points of the algebraic hull of $G$.
Let $\bf T$ be the Zariski-closure of ${\rm Ad}_{s}(G)$ in ${\rm Aut}(\g_{\C})$ and $T={\bf T}(\R)$ its real points.
Then we have a semi-direct product
\[H_{G}=T\ltimes G .
\] 
\end{lemma}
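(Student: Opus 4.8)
The plan is to start from the algebraic description already obtained in Proposition \ref{hulll} and then transport the semidirect decomposition from $\bar{\bf N}$ to $G$. By Proposition \ref{hulll} the algebraic hull is ${\bf H}_{G}={\bf T}\ltimes\bar{\bf N}$ with ${\bf T}$ diagonalizable and $\bar{\bf N}={\bf U}_{G}$ unipotent; since this semidirect product is a product of $\R$-varieties, taking $\R$-points gives $H_{G}=T\ltimes\bar N$ with $T={\bf T}(\R)$ and $\bar N=\bar{\bf N}(\R)$. Thus it suffices to check that the subgroup $G\subset H_{G}$ (embedded as in Proposition \ref{hulll}) satisfies the three conditions $G\cap T=\{e\}$, $T\cdot G=H_{G}$, and $G\trianglelefteq H_{G}$; these together yield $H_{G}=T\ltimes G$ with $G$ the normal factor.

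For the first two conditions I would use the factorization coming from $\bar G={\rm Ad}_{s}(G)\ltimes\bar N$: writing $\nu(g)={\rm Ad}_{s}(g)^{-1}g\in\bar N$, every $g\in G$ is $g={\rm Ad}_{s}(g)\,\nu(g)$ with ${\rm Ad}_{s}(g)\in{\rm Ad}_{s}(G)\subset T$. Since $\bar G={\rm Ad}_{s}(G)\cdot G={\rm Ad}_{s}(G)\cdot\bar N$, given $\bar n\in\bar N$ I can write $\bar n=s'g$ with $s'\in{\rm Ad}_{s}(G)$ and $g\in G$, whence $\nu(g)=\bar n$; so $\nu:G\to\bar N$ is surjective, and then for $(t,\bar n)\in T\ltimes\bar N$, choosing $g\in G$ with $\nu(g)=\bar n$, the element $t\,{\rm Ad}_{s}(g)^{-1}\in T$ satisfies $(t\,{\rm Ad}_{s}(g)^{-1})g=t\,\nu(g)=(t,\bar n)$, proving $T\cdot G=H_{G}$. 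If instead $g\in G\cap T$, then the $\bar N$-component $\nu(g)$ vanishes, so $g={\rm Ad}_{s}(g)\in{\rm Ad}_{s}(G)\cap G=\{e\}$ because $\bar G$ is the \emph{external} semidirect product of ${\rm Ad}_{s}(G)$ and $G$; hence $G\cap T=\{e\}$.

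The main obstacle is the normality $G\trianglelefteq H_{G}$, i.e.\ showing that the \emph{whole} torus $T$ (not only the connected group ${\rm Ad}_{s}(G)$) normalizes $G$. Here I would pass to Lie algebras: inside $\mathfrak{h}_{G}=\mathfrak{t}\oplus\bar{\n}$ the subalgebra $\g$ is the graph of the linear map $\sigma:\bar{\n}\to{\rm Im}\,{\rm ad}_{s}\subset\mathfrak{t}$ determined by $\sigma(X-{\rm ad}_{sX})={\rm ad}_{sX}$, and one computes $\ker\sigma=\n$ (using $\n=\{X\mid {\rm ad}_{X}\ \text{nilpotent}\}$ and $V\cap\n=\{0\}$). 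As $T$ is abelian, ${\rm Ad}(t)$ fixes $\mathfrak{t}$ pointwise, so ${\rm Ad}(t)(\sigma(\xi)+\xi)=\sigma(\xi)+{\rm Ad}(t)\xi$, which lies in $\g$ exactly when $({\rm id}-{\rm Ad}(t))\xi\in\ker\sigma=\n$. Thus everything reduces to the assertion that $T$ acts trivially on $\bar{\n}/\n$. For $t\in{\rm Ad}_{s}(G)$ this follows from $[{\rm ad}_{s}(\g),\bar{\n}]\subset\n$ (indeed ${\rm ad}_{sZ}$ kills $V$ and preserves $\n$, so ${\rm ad}_{sZ}(\g)\subset\n$ by Construction \ref{con}); since ${\rm Ad}_{s}(G)$ is Zariski-dense in ${\bf T}$ and the representation of ${\bf T}$ on $\bar{\n}_{\C}/\n_{\C}$ is algebraic, the action of all of $T$ is trivial there. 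Hence ${\rm Ad}(t)\g=\g$ for every $t\in T$, and because $G$ is connected and closed this gives $tGt^{-1}=G$. Combining this with the previous paragraph yields $H_{G}=T\ltimes G$.
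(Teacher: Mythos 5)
Your proposal is correct, and its skeleton is the same as the paper's: the paper also starts from Proposition \ref{hulll} (giving $H_{G}=T\ltimes \bar{N}$ on real points) and invokes the identity ${\rm Ad}_{s}(G)\ltimes \bar{N}={\rm Ad}_{s}(G)\ltimes I(G)$, which is exactly the factorization $g={\rm Ad}_{s}(g)\nu(g)$ you use. The real difference is completeness: the paper's proof is two lines and simply declares that the lemma follows, without checking any of the three conditions you isolate; in particular it never addresses the point you correctly single out as the main obstacle, namely that the \emph{whole} torus $T$, and not just ${\rm Ad}_{s}(G)$, normalizes $I(G)$ --- which is what the notation $T\ltimes G$ asserts and what is actually used later (e.g.\ in Section \ref{Extens}, to place $\Gamma\subset H_{\Gamma}\subset {\rm Aut}(G)\ltimes G$). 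Your third paragraph --- realizing $\g$ as the graph of $\sigma:\bar{\n}\to {\rm Im}\,{\rm ad}_{s}$ with $\ker\sigma=\n$, reducing normality to triviality of the $T$-action on $\bar{\n}/\n$, verifying this on ${\rm Ad}_{s}(G)$ via $[{\rm ad}_{s}(\g),\bar{\n}]\subset\n$, and extending to ${\bf T}$ by Zariski density --- is correct and supplies precisely this missing step. Two micro-details you should make explicit: before speaking of the representation of ${\bf T}$ on $\bar{\n}_{\C}/\n_{\C}$ you need ${\bf T}$ to preserve $\n_{\C}$, which holds because the stabilizer of $\n_{\C}$ is Zariski closed and contains the dense subgroup ${\rm Ad}_{s}(G)$; and for $t\in T$ a real point, ${\rm Ad}(t)$ preserves the real form $\bar{\n}$, so triviality on $\bar{\n}_{\C}/\n_{\C}$ gives $({\rm Ad}(t)-1)\bar{\n}\subset \n_{\C}\cap\bar{\n}=\n$, which is the real statement the graph argument needs. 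Finally, note that the normality step can be streamlined: the set $\{h\in {\bf H}_{G} : {\rm Ad}(h)\g_{\C}=\g_{\C}\}$ is Zariski closed and contains ${\rm Ad}_{s}(G)$ (which normalizes $G$ by construction of $\bar{G}$), hence contains ${\bf T}$; since real points preserve the real form, every $t\in T$ satisfies ${\rm Ad}(t)\g=\g$ and therefore $tGt^{-1}=G$ by connectedness of $G$, with no need for the explicit map $\sigma$.
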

\begin{proof}
By ${\rm Im}({\rm ad}_{s}) \ltimes\bar{\n}={\rm Im} ({\rm ad}_{s}) \ltimes\g$, we have  ${\rm Ad}_{s}(G)\ltimes {\bar N}= {\rm Ad}_{s}(G)\ltimes I(G)$.
Hence the lemma follows from Proposition \ref{u=n}.
\end{proof}

\begin{proposition}\label{abab}{\rm(\cite{K})}
Let $G$ be a simply connected solvable Lie group.
Then ${\bf U}_{G}$ is abelian if and only if  $G=\R^{n}\ltimes_{\phi} \R^{m}$ such that the action $\phi:\R^{n}\to {\rm  Aut} (\R^{m})$ is semi-simple.
\end{proposition}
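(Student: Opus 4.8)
The plan is to reduce the statement to the explicit description of the Lie algebra of the unipotent hull from Proposition \ref{u=n}, namely that the Lie algebra of ${\bf U}_{G}$ is $\bar\n_{\C}$ with $\bar\n=\{X-{\rm ad}_{sX}\mid X\in\g\}$, and then to decide abelianness by a direct bracket computation inside $\bar\g={\rm Im}\,{\rm ad}_{s}\ltimes\g$. Since ${\bf U}_{G}$ is abelian if and only if its Lie algebra $\bar\n$ is abelian, the whole statement becomes a question about the vanishing of brackets of elements $X-{\rm ad}_{sX}$.

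First I would record the basic bracket identity. Using $[{\rm ad}_{s}(\g),{\rm ad}_{s}(\g)]=0$ from Construction \ref{con}, for $Z_{1},Z_{2}\in\g$ one computes in $\bar\g$ that
\[
[Z_{1}-{\rm ad}_{sZ_{1}},Z_{2}-{\rm ad}_{sZ_{2}}]=[Z_{1},Z_{2}]-{\rm ad}_{sZ_{1}}(Z_{2})+{\rm ad}_{sZ_{2}}(Z_{1}).
\]
Hence $\bar\n$ is abelian precisely when $[Z_{1},Z_{2}]-{\rm ad}_{sZ_{1}}(Z_{2})+{\rm ad}_{sZ_{2}}(Z_{1})=0$ for all $Z_{1},Z_{2}\in\g$; call this condition $(\star)$. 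For the implication ($\Leftarrow$) I would simply invoke Lemma \ref{semmm}: if $G=\R^{n}\ltimes_{\phi}\R^{m}$ with $\phi$ semi-simple, then taking $\n=\R^{m}$ (abelian, hence nilpotent) the lemma gives $\bar\n=\R^{n}\oplus\R^{m}$, which is abelian, so by Proposition \ref{u=n} the group ${\bf U}_{G}$ is abelian.

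The substantive direction is ($\Rightarrow$). Assuming $(\star)$, I would write $Z_{i}=A_{i}+X_{i}$ with $A_{i}\in V$ and $X_{i}\in\n$ from the decomposition $\g=V\oplus\n$ of Construction \ref{con}, recall that ${\rm ad}_{sA+X}=({\rm ad}_{A})_{s}$ and ${\rm ad}_{sX}=0$ for $X\in\n$, and test $(\star)$ on the three types of pairs. The pair $Z_{1}=X_{1}$, $Z_{2}=X_{2}$ in $\n$ gives $[X_{1},X_{2}]=0$, so $\n$ is abelian, i.e. $\n\cong\R^{m}$. The pair $Z_{1}=A_{1}$, $Z_{2}=A_{2}$ in $V$ gives, using the defining property $({\rm ad}_{A})_{s}(B)=0$ for $A,B\in V$, that $[A_{1},A_{2}]=0$, so $V$ is an abelian subalgebra, $V\cong\R^{n}$. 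Finally the pair $Z_{1}=A\in V$, $Z_{2}=X\in\n$ gives ${\rm ad}_{A}(X)=({\rm ad}_{A})_{s}(X)$ for all $X\in\n$, so ${\rm ad}_{A}|_{\n}$ coincides with a semi-simple operator and is itself semi-simple. Thus $\g=V\ltimes_{\phi}\n$ with $V,\n$ abelian and the action semi-simple, and passing to the simply connected group yields $G=\R^{n}\ltimes_{\phi}\R^{m}$ with $\phi$ semi-simple.

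The step I expect to be the main obstacle is making this last case airtight: one must check that $({\rm ad}_{A})_{s}$ preserves $\n$, so that ${\rm ad}_{A}|_{\n}=({\rm ad}_{A})_{s}|_{\n}$ is genuinely semi-simple, and that semi-simplicity of the infinitesimal action of the commuting operators ${\rm ad}_{A}|_{\n}$ upgrades to semi-simplicity of the group action $\phi$. The first point holds because $({\rm ad}_{A})_{s}$ is a polynomial in ${\rm ad}_{A}$ without constant term while $\n$ is an ideal; the second holds because $V$ is abelian, so the operators ${\rm ad}_{A}|_{\n}$ commute and exponentiate to a simultaneously diagonalizable family.
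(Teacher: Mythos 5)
Your proof is correct, and its key step differs from the paper's in a genuinely useful way. Both arguments reduce, via Proposition \ref{u=n}, to deciding when $\bar\n=\{X-{\rm ad}_{sX}\mid X\in\g\}$ is abelian, both extract abelianness of $\n$ from $[\bar\n,\bar\n]=0$, and both settle the converse by Lemma \ref{semmm}. But in the forward direction the paper does not work with Construction \ref{con}'s subspace $V$ at all: it invokes an external splitting result (\cite[Lemma 4.1]{De}) to write $\g=\g/\n'\ltimes\n'$, where $\n'=\bigcap_{i}\g^{i}$ is the intersection of the lower central series, argues that the embedded copy of $\g/\n'$ lands in $\bar\n$ as a subalgebra and hence is abelian, and then gets semi-simplicity of the action on $\n$ from $[\bar\n,\n]=0$ exactly as you do. You instead notice that the defining property of $V$ in Construction \ref{con}, namely $({\rm ad}_{A})_{s}(B)=0$ for $A,B\in V$, annihilates both ${\rm ad}_{s}$-terms in your bracket identity, so condition $(\star)$ applied to pairs in $V$ directly forces $[A_{1},A_{2}]=0$; thus $V$ itself is an abelian complement and no external lemma is required. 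Your route is therefore more self-contained (it uses only facts already quoted in Construction \ref{con} and Proposition \ref{u=n}), at the modest cost of resting on the chosen subspace $V$, whereas the paper's $\n'$ is a canonical ideal. The two points you flag at the end are indeed the ones requiring care, and your resolutions are sound; note that the paper silently relies on the same two facts when it restricts ${\rm ad}_{X}-{\rm ad}_{sX}$ to $\n$ and concludes semi-simplicity of the group action $\phi$ from semi-simplicity of the commuting operators ${\rm ad}_{A}\vert_{\n}$.
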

\begin{proof}
Suppose  ${\bf U}_{G}$ is abelian.
Then by proposition \ref{hulll}, the Lie algebra $\bar \n$ is abelian.
By  $\n\subset \bar{\n}$, the nilradical $\n$ of $\g$ is abelian.
By $[\g,\g]\subset \n$, $\g$ is two-step solvable.
We consider the lower central series $\g^{i}$ as $\g^{0}=\g$ and $\g^{i}=[\g,\g^{i-1}]$ for $i\ge 1$.
We denote $\n^{\prime}=\cap^{\infty}_{i=0} \g^{i}$.
Then
by \cite[Lemma 4.1]{De}, we have $\g=\g/\n^{\prime}\ltimes \n^{\prime}$.
For this decomposition, the subspace $\{X-{\rm ad}_{sX}\vert X\in \g/\n^{\prime}\}\subset \bar\n$ is a Lie subalgebra of $\bar\n$.
Since $\g/\n^{\prime}$ is a nilpotent subalgebra of $\g$, this space is identified with  $\g/\n^{\prime}$.
Thus since $\bar \n$ is abelian,  $\g/\n^{\prime}$ is also abelian.
We show that the action of $\g/\n$ on $\n$ is semi-simple.
Suppose for some $ X\in \g/\n^{\prime}$  ${\rm ad}_{X}$ on $\n$ is not semi-simple .
Then ${\rm ad}_{X}-{\rm ad}_{sX}$ on $\n$ is not trivial.
Since we have $\bar{\n}=\{X-{\rm ad}_{sX}\vert X\in \g\}$, we have $[\bar{\n},\n]\not= \{0\}$.
This contradicts $\bar{\n}$ is abelian. 
Hence the action of $\g/\n$  on $\n$ is semi-simple.
Hence the first half of the proposition follows.
The converse follows from Lemma \ref{semmm}.
\end{proof}

\section{Left-invariant forms and the cohomology of solvmanifolds}
Let $G$ be a simply connected solvable Lie group, $\g$ the Lie algebra of $G$ and $\rho: G\to GL(V_{\rho})$ a representation on a $\C$-vector space $V_{\rho}$.
We consider the cochain complex $\bigwedge \g^{\ast}$ with the  derivation $d$ which is the dual to the Lie bracket of $\g$.
Then $\bigwedge \g^{\ast}_{\C}\otimes V_{\rho}$ is a cochain complex with the derivation 
$d_{\rho}=d+\rho_{\ast}$ where $\rho_{\ast}$ is the derivation of $\rho$ and consider $\rho_{\ast}\in \g^{\ast}_{\C}\otimes {\frak gl}(V_{\rho})$.
We can consider the cochain complex   $(\bigwedge \g^{\ast}_{\C}\otimes V_{\rho}, d_{\rho})$
 the twisted $G$-invariant differential forms on $G$.
Consider the cochain complex $A^{\ast}_{\C}(G)\otimes V_{\rho}$ with the derivation $d$ such that
\[d(\omega\otimes v)=(d\omega ) \otimes v \  \ \  \ \omega\in A^{\ast}_{\C}(G), \ \  \ v\in V_{\rho}.
\]
By the left action of $G$ (given by $(g\cdot f)(x)=f(g^{-1}x)$, \  $f\in C^{\infty}(G)$, $g\in G$) 
 and $\rho$, we have the action of $G$ on $A^{\ast}_{\C}(G)\otimes V_{\rho}$.
Denote $(A^{\ast}_{\C}(G)\otimes V_{\rho})^{G}$ the $G$-invariant elements of $A^{\ast}_{\C}(G)\otimes V_{\rho}$.
Then we have an isomorphism 
\[(A^{\ast}_{\C}(G)\otimes V_{\rho})^{G}\cong \bigwedge \g^{\ast}_{\C}\otimes V_{\rho}.
\]

Suppose $G$ has a lattice $\Gamma$.
Since $\pi_{1}(G/\Gamma)=\Gamma$, we have a flat vector bundle $E_{\rho_{\vert_{\Gamma}}}$ with flat connection $D_{\rho_{\vert_{\Gamma}}}$ on $G/\Gamma$ whose monodromy  is $\rho_{\vert_{\Gamma}}$.
Let $A^{\ast}(G/\Gamma, E_{\rho_{\vert_{\Gamma}}})$ be the cochain complex of $E_{\rho_{\vert_{\Gamma}}}$-valued differential forms with the  derivation $D_{\rho_{\vert_{\Gamma}}}$.
Consider the cochain complex $A^{\ast}_{\C}(G)\otimes V_{\rho}$ with derivation $d$ such that
\[d(\omega\otimes v)=(d\omega ) \otimes v \  \ \  \ \omega\in A^{\ast}_{\C}(G), \ \  \ v\in V_{\rho}.
\]
Then we have the $G$-action on  $A^{\ast}_{\C}(G)\otimes V_{\rho}$ and denote 
 $(A^{\ast}_{\C}(G)\otimes V_{\rho})^{\Gamma}$ the subcomplex of $\Gamma$-invariant elements of  $A^{\ast}_{\C}(G)\otimes V_{\rho}$.
We have the isomorphism  $(A^{\ast}_{\C}(G)\otimes V_{\rho})^{\Gamma}\cong A^{\ast}(G/\Gamma, E_{\rho_{\vert_{\Gamma}}})$.
Thus we have 
\[\bigwedge \g^{\ast}_{\C}\otimes V_{\rho}\cong (A^{\ast}_{\C}(G)\otimes V_{\rho})^{G} \subset(A^{\ast}_{\C}(G)\otimes V_{\rho})^{\Gamma}\cong A^{\ast}(G/\Gamma, E_{\rho_{\vert_{\Gamma}}})\ 
\]
and we have the inclusion $\bigwedge \g^{\ast}_{\C}\otimes V_{\rho}\to A^{\ast}(G/\Gamma, E_{\rho_{\vert_{\Gamma}}})$.

We call a representation $\rho$ $\Gamma$-admissible if for the representation $\rho\oplus {\rm Ad} :G\to GL_{n}(\C)\times {\rm Aut}(\g_{\C})$,  
$(\rho\oplus {\rm Ad})(G)$ and $(\rho\oplus {\rm Ad})(\Gamma)$ have the same Zariski-closure in $GL_{n}(\C)\times {\rm Aut}(\g_{\C})$.
\begin{theorem}\label{Moss}{\rm (\cite{Mosc},\cite[Theorem 7.26]{R})} 
If $\rho$ is $\Gamma$-admissible, then the inclusion 
\[\bigwedge \g^{\ast}_{\C}\otimes V_{\rho}\to A^{\ast}(G/\Gamma, E_{\rho_{\vert_{\Gamma}}})
\]
induces a cohomology isomorphism.
\end{theorem}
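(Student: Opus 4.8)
The plan is to reduce the assertion to the nilpotent case, where it is Nomizu's theorem, by fibering $G/\Gamma$ over a torus along the nilradical and comparing two spectral sequences. Let $N$ be the nilradical of $G$ with Lie algebra $\n$, and put $\Gamma_{N}=\Gamma\cap N$. Then $\Gamma_{N}$ is a lattice in $N$ and $\Gamma/\Gamma_{N}$ is a lattice in the abelian group $G/N$, so we obtain a fibration
\[
N/\Gamma_{N}\hookrightarrow G/\Gamma\xrightarrow{\ \pi\ }(G/N)/(\Gamma/\Gamma_{N})
\]
with nilmanifold fiber and torus base. First I would filter $A^{\ast}(G/\Gamma,E_{\rho_{\vert_{\Gamma}}})$ by the pullback degree along $\pi$, obtaining the Leray--Serre spectral sequence with coefficients in the local system, and filter $\bigwedge\g^{\ast}_{\C}\otimes V_{\rho}$ by the Hochschild--Serre filtration attached to the ideal $\n\subset\g$. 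Under the isomorphism $\bigwedge\g^{\ast}_{\C}\otimes V_{\rho}\cong(A^{\ast}_{\C}(G)\otimes V_{\rho})^{G}$ the inclusion is filtration-preserving, so it induces a morphism of spectral sequences; since both are bounded and hence convergent, it suffices by the comparison theorem to show that the induced map is an isomorphism on the $E_{2}$-page.

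Next I would treat the fiber. The relevant row of the Hochschild--Serre $E_{1}$-page is the Lie algebra cohomology $H^{\ast}(\n,V_{\rho})$, while the fiberwise term of the Leray--Serre page is $H^{\ast}(N/\Gamma_{N},E_{\rho_{\vert_{\Gamma_{N}}}})$. Here $N$ is nilpotent, ${\rm Ad}_{\vert_{N}}$ is unipotent, and the admissibility hypothesis restricts to $N$ and $\Gamma_{N}$: by the matching of unipotent radicals of the Zariski-closures of $(\rho\oplus{\rm Ad})(G)$ and $(\rho\oplus{\rm Ad})(\Gamma)$ (cf. \cite[Theorem 3.2]{R}) the same density condition holds on the fiber. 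Consequently the nilmanifold case of the theorem --- Nomizu's computation in \cite{Nom} together with its extension to unipotent local systems --- shows that
\[
\bigwedge\n^{\ast}_{\C}\otimes V_{\rho}\to A^{\ast}(N/\Gamma_{N},E_{\rho_{\vert_{\Gamma_{N}}}})
\]
is a quasi-isomorphism, and this identification is compatible with the monodromy of the base.

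It remains to compare the two $E_{2}$-pages over the torus base, and \textbf{this is the crux and the main obstacle}. On the Lie-algebra side the $E_{2}$-term is $H^{\ast}(\g/\n,H^{\ast}(\n,V_{\rho}))$ for the abelian algebra $\g/\n$, and on the topological side it is $H^{\ast}$ of the torus $(G/N)/(\Gamma/\Gamma_{N})$ with coefficients in the local system whose fiber is $H^{\ast}(N/\Gamma_{N},E_{\rho_{\vert_{\Gamma_{N}}}})$ and whose monodromy is the induced action of $\Gamma/\Gamma_{N}$. Both the action of $G/N$ and that of its lattice $\Gamma/\Gamma_{N}$ on the fiber cohomology are induced by $\rho\oplus{\rm Ad}$, and $\Gamma$-admissibility says precisely that their images have the same Zariski-closure. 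Since the fiber cohomology is a finite-dimensional algebraic module over that closure, a class invariant under $\Gamma/\Gamma_{N}$ is invariant under the whole Zariski-closure, hence under $G/N$; this upgrades $\Gamma$-invariance to $G$-invariance and makes the base comparison an isomorphism. I expect the technical heart to lie exactly here: one must check that the coefficient systems are genuine algebraic representations of the Zariski-closure, that the two filtrations match under the inclusion, and that the density argument is applied to the cohomology modules rather than to the forms --- once these are in place, the isomorphism on $E_{2}$ propagates to $E_{\infty}$ and yields the desired cohomology isomorphism.
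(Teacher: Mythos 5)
First, a point of order: the paper does not prove this statement at all --- it is quoted as Mostow's theorem, with the references \cite{Mosc} and \cite[Theorem 7.26]{R}, and is used later as a black box. So your proposal cannot be matched against ``the paper's proof''; it has to stand on its own, and as written it has two genuine gaps, located exactly at the two places where the content of the theorem lives.

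The first gap is the fiber step. You invoke ``Nomizu's computation together with its extension to unipotent local systems'' for $N/\Gamma_{N}$, but under mere $\Gamma$-admissibility the monodromy $\rho\vert_{\Gamma_{N}}$ need not be unipotent: only $\rho([G,G])$ is automatically unipotent, and the nilradical can be strictly larger than $[\g,\g]$. For instance, let $G=\R\ltimes_{\phi}(\R^{2}\times\R)$ with $\phi(t)=\mathrm{diag}(e^{t},e^{-t})\oplus 1$, let $\Gamma=\Z\ltimes(\Z^{2}\times\Z)$ be a standard lattice (built from a hyperbolic element of $SL_{2}(\Z)$), and let $\rho(t,x,y,z)=e^{z}$. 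One checks that $\rho$ is $\Gamma$-admissible (both $\rho(G)$ and $\rho(\Gamma)$ fill the same $\C^{\ast}$-factor, and ${\rm Ad}(G)$, ${\rm Ad}(\Gamma)$ have the same closure), yet the local system on the fiber torus $N/\Gamma_{N}$ has the non-unipotent monodromy $(a,b,n)\mapsto e^{n}$. Hence the fiber comparison is not an instance of Nomizu/van Est; it is the nilpotent case of the very theorem being proved, so the argument as written is circular unless you set up a genuine induction. Moreover, your claim that admissibility ``restricts'' to $(N,\Gamma_{N})$ via \cite[Theorem 3.2]{R} is unsupported: that theorem identifies the unipotent radicals of the Zariski closures of $\rho(G)$ and $\rho(\Gamma)$, and says nothing about the semisimple parts of the closures of $\rho(N)$ and $\rho(\Gamma_{N})$.

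The second gap is the base step, which you yourself flag as the crux. Your justification is that Zariski density upgrades $\Gamma/\Gamma_{N}$-invariance to $G/N$-invariance; this only treats the column $p=0$. For $p>0$ the topological $E_{2}^{p,q}$-term is the group cohomology $H^{p}(\Z^{k},H^{q}(N/\Gamma_{N},E_{\rho}))$ with coefficients in a generally non-trivial module, while the algebraic term is $H^{p}(\g/\n,H^{q}(\n,V_{\rho}))$; identifying them is precisely the abelian case of Mostow's theorem, and it is a statement about cokernels, not kernels. Already for $k=1$ one needs $\mathrm{coker}(T-\mathrm{id})\cong\mathrm{coker}(X)$ on the finite-dimensional fiber cohomology, where $T=\exp(X)$ is the monodromy of the base circle; this fails exactly when $X$ has an eigenvalue in $2\pi\sqrt{-1}\,(\Z\setminus\{0\})$, and ruling that out is where admissibility must actually be spent, via a Jordan-decomposition/weight-space analysis that no invariance argument supplies. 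So both the fiber input and the base comparison are asserted rather than proved, and what remains (the Mostow fibration, the two filtrations, convergence) is the routine part; filling the two gaps essentially amounts to proving Mostow's theorem by induction, which is a legitimate but substantially longer route than the one sketched.
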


\begin{proposition}\label{adddm}
Let $G$ be a simply connected solvable Lie group with a lattice $\Gamma$.
We suppose ${\rm Ad}(G)$ and ${\rm Ad}(\Gamma)$ have the same Zariski-closure in ${\rm Aut}(\g_{\C})$.
We consider the diagonalizable representation ${\rm Ad}_{s}:G\to {\rm Aut}(G)$.
Let $\bf T$ be the Zariski-closure of ${\rm Ad}_{s}(G)$ and $\alpha$ be a character of $\bf T$.
Then $\alpha\circ {\rm Ad}_{s}$ is $\Gamma$-admissible.
\end{proposition}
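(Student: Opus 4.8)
The plan is to unwind the definition of $\Gamma$-admissibility and reduce everything to a single factorization statement about characters. Write ${\bf A}$ for the common Zariski-closure of ${\rm Ad}(G)$ and ${\rm Ad}(\Gamma)$ in ${\rm Aut}(\g_{\C})$, which exists by hypothesis, and set $\rho=\alpha\circ {\rm Ad}_{s}$. By definition I must show that $(\rho\oplus {\rm Ad})(G)$ and $(\rho\oplus {\rm Ad})(\Gamma)$ have the same Zariski-closure in $\C^{\ast}\times {\rm Aut}(\g_{\C})$. The key claim I would isolate is that $\rho$ factors algebraically through ${\rm Ad}$, i.e. there is an algebraic character $\chi:{\bf A}\to \C^{\ast}$ with $\rho=\chi\circ {\rm Ad}$ on $G$. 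Granting this, consider the morphism $j:{\bf A}\to \C^{\ast}\times {\rm Aut}(\g_{\C})$, $j(a)=(\chi(a),a)$; it is a closed immersion (the graph of the homomorphism $\chi$), so it carries Zariski-closures to Zariski-closures. Since $(\rho\oplus {\rm Ad})(S)=j({\rm Ad}(S))$ for any $S\subset G$, I get $\overline{(\rho\oplus {\rm Ad})(G)}=j(\overline{{\rm Ad}(G)})=j({\bf A})=j(\overline{{\rm Ad}(\Gamma)})=\overline{(\rho\oplus {\rm Ad})(\Gamma)}$, which is exactly admissibility.

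It then remains to produce $\chi$. I would first reduce to the case where $\alpha$ is a weight of the ${\bf T}$-representation $\g_{\C}$: since ${\bf T}$ is diagonalizable and acts faithfully on $\g_{\C}$ via ${\rm Ad}_{s}$, the weights $\lambda$ occurring in $\g_{\C}$ generate the full character group $X^{\ast}({\bf T})$, so it suffices to factor $\alpha_{\lambda}\circ {\rm Ad}_{s}$ for each such weight and then take the corresponding product (with integer exponents) of the resulting characters. To build $\chi_{\lambda}$, apply the Lie--Kolchin theorem to the connected solvable group ${\bf A}$ to choose a basis of $\g_{\C}$ in which every element of ${\bf A}$ is upper triangular; the $i$-th diagonal entry is then an algebraic character $\chi_{i}:{\bf A}\to \C^{\ast}$. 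The Lie algebra of ${\bf A}$ is also upper triangular, so ${\rm ad}_{X}$ is upper triangular for every $X\in\g$, and the diagonal entries of ${\rm ad}_{A}$ (which depend linearly on $A$) recover the weights $\lambda$.

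The factorization itself I would verify by an eigenvalue computation. Using Lemma \ref{exponnn}, write a general $g\in G$ uniquely as $g=\exp(A)\exp(X)$ with $A\in V$, $X\in\n$, so that ${\rm Ad}(g)=\exp({\rm ad}_{A})\exp({\rm ad}_{X})$. Since $X\in\n$ the operator ${\rm ad}_{X}$ is nilpotent, hence has zero diagonal in the triangularizing basis and $\exp({\rm ad}_{X})$ has unit diagonal; therefore the $i$-th diagonal entry of ${\rm Ad}(g)$ equals that of $\exp({\rm ad}_{A})$, namely $e^{\lambda_{i}(A)}$ where $\lambda_{i}$ is the $i$-th diagonal functional of ${\rm ad}_{A}$. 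On the other hand ${\rm Ad}_{s}(g)=\exp(({\rm ad}_{A})_{s})$ acts on the weight space $\g_{\lambda}$ by the scalar $e^{\lambda(A)}$, so $\alpha_{\lambda}\circ {\rm Ad}_{s}(g)=e^{\lambda(A)}$. Choosing an index $i$ with $\lambda_{i}=\lambda$ and setting $\chi_{\lambda}=\chi_{i}$ gives $\alpha_{\lambda}\circ {\rm Ad}_{s}=\chi_{\lambda}\circ {\rm Ad}$ on $G$, as desired.

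I expect the main obstacle to be precisely this last comparison, and it is the point that requires care. One is tempted to argue that ${\rm Ad}_{s}(g)$ is literally the Jordan semisimple part of ${\rm Ad}(g)$ and conclude ${\bf T}\subset {\bf A}$; but this is false in general, because $g\mapsto {\rm Ad}_{s}(g)$ is a group homomorphism while $g\mapsto ({\rm Ad}(g))_{s}$ is not (Jordan decomposition is not multiplicative on non-commuting elements, and $({\rm ad}_{A})_{s}$ need not commute with ${\rm ad}_{X}$). The resolution is to match $\alpha\circ {\rm Ad}_{s}$ and $\chi\circ {\rm Ad}$ only at the level of characters and eigenvalues rather than as operators: both extract the same product of diagonal eigenvalues $e^{\lambda(A)}$, even though ${\rm Ad}_{s}(g)$ and the diagonal part of ${\rm Ad}(g)$ are merely conjugate and live in different (though conjugate) maximal tori of ${\rm Aut}(\g_{\C})$. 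Keeping the argument at the level of the character lattice is what makes the factorization through ${\bf A}$ go through.
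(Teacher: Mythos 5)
Your proposal is correct, and its outer skeleton is the same as the paper's: both arguments reduce $\Gamma$-admissibility to producing an algebraic character $\chi$ of the common Zariski closure ${\bf A}=\overline{{\rm Ad}(G)}=\overline{{\rm Ad}(\Gamma)}$ with $\alpha\circ {\rm Ad}_{s}=\chi\circ {\rm Ad}$ on $G$, and then finish with the graph of $\chi$ (the paper's auxiliary group ${\bf G}^{\prime}$ is exactly your $j({\bf A})$, and its density argument is your closed-immersion argument). Where you genuinely diverge is in how $\chi$ is built. The paper proves that ${\bf T}$ sits inside ${\bf A}$ as a maximal torus: since ${\rm Ad}_{s}(G)={\rm Ad}_{s}(\exp V)$, and for $A\in V$ the element ${\rm Ad}_{s}(\exp A)=\exp(({\rm ad}_{A})_{s})$ \emph{is} the multiplicative Jordan semisimple part of ${\rm Ad}(\exp A)$ (here the two exponentials commute), closedness of algebraic groups under Jordan decomposition gives ${\rm Ad}_{s}(G)\subset {\bf A}$, hence ${\bf T}\subset {\bf A}$ and ${\bf A}={\bf T}\,{\bf U}({\bf A})$; the character is then simply $\chi=\alpha\circ p_{\bf T}$ for the projection $p_{\bf T}:{\bf A}={\bf T}\ltimes {\bf U}({\bf A})\to {\bf T}$, because ${\rm Ad}(\exp A\exp X)=\exp(({\rm ad}_{A})_{s})\cdot \exp(({\rm ad}_{A})_{n})\exp({\rm ad}_{X})$ has ${\bf T}$-component $\exp(({\rm ad}_{A})_{s})={\rm Ad}_{s}(\exp A\exp X)$. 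Your route — Lie--Kolchin triangularization of ${\bf A}$, surjectivity of weight restriction for a faithful representation of a diagonalizable group, and matching diagonal functionals with weights — never needs any containment between ${\bf T}$ and ${\bf A}$ and stays entirely in the character lattice; it is more elementary in its ingredients, at the cost of being longer. The paper's version buys brevity and the structural byproduct that ${\bf T}$ is a maximal torus of ${\bf A}$, consistent with the picture in Proposition \ref{hulll}.

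Two caveats on your write-up. First, your cautionary final paragraph overstates the obstruction: it is true that ${\rm Ad}_{s}(g)\neq ({\rm Ad}(g))_{s}$ for general $g$, but the identity does hold on the slice $\exp(V)$, which generates ${\rm Ad}_{s}(G)$, so the conclusion ${\bf T}\subset {\bf A}$ you warn against is in fact true (and is how the paper proceeds). Second, your phrase ``choosing an index $i$ with $\lambda_{i}=\lambda$'' hides the only nontrivial point of your construction: for each fixed $A$ the eigenvalue multisets of ${\rm ad}_{A}$ and $({\rm ad}_{A})_{s}$ agree, but you need a single index $i$ with $\lambda_{i}=\lambda$ as linear functionals on $V$, i.e.\ a matching that does not permute as $A$ varies. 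This is true — compare $\prod_{i}\left(T-\lambda_{i}(A)\right)$ with the characteristic polynomial of $({\rm ad}_{A})_{s}$ inside $\C[V][T]$ and invoke unique factorization into the irreducible factors $T-\ell(A)$ — but it deserves an explicit sentence; with it, your argument is complete.
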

\begin{proof}
Let $\bf G$ be the Zariski-closure of ${\rm Ad}(G)$ in ${\rm Aut}(\g_{\C})$.
We first show that $\bf T$ is a maximal torus of $\bf G$.
For the direct sum $\g=V\oplus \n$ as Construction \ref{con}, the map  $F:V\oplus \n\to G$ defined by $F(A+X)=\exp (A) \exp(X)$ for $A\in V$, $X\in \n$ is a diffeomorphism (see \cite[Lemaa 3.3]{Dek}).
For  $A\in V$, we consider   the Jordan decomposition ${\rm Ad}(\exp (A))=\exp (({\rm ad}_{A})_{s}) \exp (({\rm ad}_{A})_{n})$.
Then we have $\exp (({\rm ad}_{A})_{s}), \exp (({\rm ad}_{A})_{n})\in {\bf G}$.
For  $X\in \n$, we have $\exp ({\rm ad}_{X})\in {\bf U}({\bf G})$.
Hence we have ${\rm Ad}(G)\subset {\bf T}{\bf U}({\bf G})\subset {\bf G}$.
 Since ${\bf G}$ is Zariski-closure of ${\rm Ad}(G)$,  ${\bf G}={\bf T}{\bf U}({\bf G})$.
Thus  $\bf T$ is a maximal torus of $\bf G$.

We take a spritting ${\bf G}={\bf T}\ltimes {\bf U}({\bf G})$.
We consider the algebraic group
\[{\bf G}^{\prime}=\{(\alpha(t), (t, u))\in \C^{\ast}\times {\bf G}\vert (t,u)\in {\bf T}\ltimes {\bf U}({\bf G})\}.
\]
Then we have
\begin{multline*}
(\alpha\circ {\rm Ad}_{s}\oplus {\rm Ad})(G)\\
=\{(\alpha(\exp (({\rm ad}_{A})_{s})), \exp ({\rm ad}_{A})\exp({\rm ad}_{X}))\vert A+X\in V\oplus \n\}\\
\subset {\bf G}^{\prime}.
\end{multline*}
Since ${\bf G}$ is Zariski-closure of ${\rm Ad}(G)$,   $(\alpha\circ {\rm Ad}_{s}\oplus {\rm Ad})(G)$ is Zariski-dense in ${\bf G}^{\prime}$.
Since $  {\rm Ad}(G)$ and ${\rm Ad}(\Gamma)$ have the same Zariski-closure, $(\alpha\circ {\rm Ad}_{s}\oplus {\rm Ad})(G)$ and $(\alpha\circ {\rm Ad}_{s}\oplus {\rm Ad})(\Gamma)$ have the same Zariski-closure ${\bf G}^{\prime}$.
\end{proof}

\section{Hain's DGAs}
\subsection{Constructions}
Let $M$ be a $C^{\infty}$-manifold, $\bf S$ be a reductive algebraic group and $\rho :\pi_{1}(M,x)\to {\bf S}$ be a representation.
We assume the image of $\rho$ is Zariski-dense in $\bf S$.
Let $\{V_{\alpha}\}$ be the set of irreducible representations  of $\bf S$ and $(E_{\alpha}, D_{\alpha})$ be a  flat bundle with the monodromy $\alpha \circ \rho$ and $A^{\ast}(M,E_{\alpha})$  the space of $E_{\rho}$-valued $C^{\infty}$-differential forms.
Then we have an algebra isomorphism of $\bigoplus_{\alpha}V_{\alpha}\otimes V_{\alpha}^{\ast}$ and the coordinate ring $\C[{\bf S}]$ of $\bf S$ (see \cite[Section 3]{Hai}).
Denote 
\[A^{\ast}(M,{\mathcal O}_{\rho})=\bigoplus_{\alpha} A^{\ast}(M, E_{\alpha})\otimes V_{\alpha}^{\ast}\]
 and $D=\bigoplus_{\alpha} D_{\alpha}$.
Then by the wedge product, $(A(M, {\mathcal O}_{\rho})), D)$ is a cohomologically connected  DGA with coefficients in $\C$ .

Suppose $\bf S$ is a diagonal algebraic group.
Then  $\{V_{\alpha}\}$ is the set of one-dimensional representations for all algebraic characters $\alpha$ of $\bf T$ and $(E_{\alpha}, D_{\alpha})$ are rank one flat bundles with the monodromy $\alpha \circ \rho$.
In this case for characters $\alpha$ and $\beta$, we have
 the wedge product $A^{\ast}(M,E_{\alpha})\otimes A^{\ast}(M,E_{\beta})\to A^{\ast}(M,E_{\alpha \beta})$ and $D_{\alpha \beta}(\psi_{\alpha}\wedge\psi_{\beta})=D_{\alpha}\psi_{\alpha}\wedge \psi_{\beta}+(-1)^{p}\psi_{\alpha}\wedge D_{\beta}\psi_{\beta}$ for $\psi_{\alpha}\in A^{p}(M,E_{\alpha}),\,\psi_{\beta}\in A^{q}(M,E_{\beta})$ (see \cite{Nar} for  details in this case).

\subsection{Formality and the hard Lefschetz properties of compact K\"ahler manifolds}
In this subsection we will prove the following theorem by theories of Higgs bundles studied by Simpson.
\begin{theorem}\label{fol}
Let $M$ be a compact K\"ahler manifold with a K\"ahler form $\omega$ and $\rho:\pi_{1}(M)\to {\bf S}$ a representation to a reductive algebraic group $\bf S$ with the Zariski-dense imaga.
Then the following conditions hold:\\
{\rm (A)} (formality) The DGA  $A^{\ast}(M,{\mathcal O}_{\rho}))$ is formal.\\
{\rm (B)}(hard Lefschetz) For any $0\leq i\leq n$ the  linear operator
\[ [\omega]^{i}\wedge: H^{n-i}(A^{\ast}(M,{\mathcal O}_{\rho}))) \to H^{n+i}(A^{\ast}(M,{\mathcal O}_{\rho}))) \]
is an isomorphism where $\dim _{\R} M=2n$.

\end{theorem}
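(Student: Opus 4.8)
The plan is to invoke Corlette's and Simpson's nonabelian Hodge theory to equip the DGA $A^{\ast}(M,{\mathcal O}_{\rho})$ with a refined Hodge-theoretic structure and then run the Deligne--Griffiths--Morgan--Sullivan argument in this twisted setting. First I would observe that each flat bundle $(E_{\alpha},D_{\alpha})$ is semisimple: since $\rho(\pi_{1}(M))$ is Zariski-dense in the reductive group ${\bf S}$ and $\alpha$ is a morphism of algebraic groups, the Zariski-closure of the monodromy $\alpha\circ\rho$ is the image $\alpha({\bf S})$, which is again reductive, so the local system $E_{\alpha}$ is completely reducible. Hence the principal ${\bf S}$-bundle attached to $\rho$ is a reductive flat bundle, and by Corlette's theorem it admits a harmonic reduction to a maximal compact subgroup. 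Because $A^{\ast}(M,{\mathcal O}_{\rho})$ is built from the coordinate ring $\C[{\bf S}]=\bigoplus_{\alpha}V_{\alpha}\otimes V_{\alpha}^{\ast}$ and all the structure maps are ${\bf S}$-equivariant, this single harmonic reduction induces compatible harmonic metrics on every $E_{\alpha}$, and hence a global decomposition $D=D^{\prime}+D^{\prime\prime}$ into operators of $(1,0)$- and $(0,1)$-type that are derivations of the wedge product. This multiplicative compatibility is what lets the whole argument proceed at the level of DGAs rather than merely of cochain complexes.

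Second, I would record the Kähler identities for harmonic bundles due to Simpson: on the compact Kähler manifold $M$ the Laplacians satisfy $\Delta_{D}=2\Delta_{D^{\prime}}=2\Delta_{D^{\prime\prime}}$, together with $(D^{\prime})^{2}=(D^{\prime\prime})^{2}=0$ and $D^{\prime}D^{\prime\prime}+D^{\prime\prime}D^{\prime}=0$. From these one extracts the principle of two types (the $D^{\prime}D^{\prime\prime}$-lemma): any form that is $D$-closed and lies in the image of $D^{\prime}$ or of $D^{\prime\prime}$ already lies in the image of $D^{\prime}D^{\prime\prime}$. Setting $D^{c}=i(D^{\prime\prime}-D^{\prime})$, this is exactly the $DD^{c}$-lemma for the anticommuting derivations $(D,D^{c})$.

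Third, for formality I would transport the DGMS zig-zag verbatim with $(d,d^{c})$ replaced by $(D,D^{c})$: the inclusion $(\ker D^{c},D)\hookrightarrow (A^{\ast}(M,{\mathcal O}_{\rho}),D)$ and the projection $(\ker D^{c},D)\twoheadrightarrow (\ker D^{c}/{\rm im}\,D^{c},\bar{D})$ are both DGA morphisms; both are quasi-isomorphisms by the $DD^{c}$-lemma, and the induced differential $\bar{D}$ on the quotient vanishes for the same reason. Since the $DD^{c}$-lemma also identifies $\ker D^{c}/{\rm im}\,D^{c}$ with $H^{\ast}(A^{\ast}(M,{\mathcal O}_{\rho}))$ as algebras, this exhibits the required chain of quasi-isomorphisms to the cohomology equipped with the zero differential, proving (A).

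Finally, for the hard Lefschetz property (B) I would use that $\Delta_{D}=2\Delta_{D^{\prime\prime}}$, so that $H^{\ast}(A^{\ast}(M,{\mathcal O}_{\rho}))$ is represented by $D$-harmonic forms. The Lefschetz operator $L=\omega\wedge$ commutes with $\Delta_{D}$, because $\omega$ is parallel and the Kähler identities for harmonic bundles give the usual commutation relations between $L,\Lambda$ and $D^{\prime},D^{\prime\prime}$; hence $L$, its adjoint $\Lambda$, and the degree-counting operator $H$ generate an $\mathfrak{sl}_{2}$-action on the space of harmonic forms, and the representation theory of $\mathfrak{sl}_{2}$ yields that $L^{i}\colon H^{n-i}\to H^{n+i}$ is an isomorphism. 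The main obstacle is not the formal DGMS bookkeeping but the analytic input: verifying that the harmonic reduction furnished by Corlette's theorem is compatible with the tensor and algebra structure on $\bigoplus_{\alpha}E_{\alpha}\otimes V_{\alpha}^{\ast}$, so that $D^{\prime}$ and $D^{\prime\prime}$ are genuine derivations, and invoking Simpson's Kähler identities in the non-unitary harmonic setting, are the substantive steps on which everything else rests.
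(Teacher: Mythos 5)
Your proposal is correct and follows essentially the same route as the paper: both rest on nonabelian Hodge theory for semisimple local systems on a compact K\"ahler manifold (existence of a harmonic metric, the resulting decomposition $D=D^{\prime}+D^{\prime\prime}$ compatible with products, and the Hodge-theoretic consequences), which is exactly the content of the Simpson results the paper invokes. The only difference is one of packaging: where the paper cites Simpson's Theorem 1, Lemma 2.2 and Lemma 2.6 as black boxes and handles multiplicativity via tensor products of harmonic Higgs bundles, you re-derive those inputs yourself --- Corlette's principal-bundle harmonic reduction for the multiplicative compatibility, the $DD^{c}$-lemma and DGMS zig-zag (with $\ker D^{c}$ in place of the paper's $\ker D^{\prime}$) for formality, and the $\mathfrak{sl}_{2}$-action on harmonic forms for hard Lefschetz --- and you additionally make explicit the semisimplicity of each $\alpha\circ\rho$, a point the paper leaves implicit.
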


Let $M$ be a compact K\"ahler manifold and $E$ a holomorphic vector bundle on $M$ with the Dolbeault operator ${\bar \partial }$.
For a ${\rm End}( E)$-valued holomorphic form $\theta $, we denote $D^{\prime \prime}={\bar \partial }+\theta$.
We call $(E,D^{\prime \prime})$ a Higgs bundle if it satisfies the Leibniz rule: $ D^{\prime \prime}(ae)={\bar \partial }(a) e+(-1)^{p}D^{\prime \prime}(e)$ for $a\in A^{p}(M)$, $e\in A^{0}(E)$ and the integrability: $(D^{\prime \prime})^{2}=0$.
Let $h$ be a Hermitian metric on $E$.
For a Higgs bundle $(E, D^{\prime \prime}={\bar \partial }+\theta)$.
We define $D^{\prime}_{h}=\partial_{h}+{\bar \theta}_{h}$ as follows:
$\partial_{h}$ is the unique operator which satisfies 
\[h({\bar \partial }e,f)+h(e,\partial_{h}f)={\bar \partial }h(e,f)\]
and ${\bar \theta}_{h}$ is defined by $(\theta e, f)=(e, {\bar \theta}_{h}f)$.
Let $D_{h}=D^{\prime}_{h}+D^{\prime \prime}$.
Then $D_{h}$ is a connection.
We call a Higgs bundle $(E,D^{\prime \prime},h)$ with a metric harmonic if $D_{h}$ is flat i.e. $(D_{h})^{2}=0$.

For  two Higgs bundles $(E,D^{\prime \prime})$, $(F, D^{\prime \prime})$ with metric $h_{E}$, $h_{F}$, the tensor product $(E\otimes F, D^{\prime \prime}\otimes 1+1\otimes D^{\prime \prime})$ is an also Higgs bundle and $h_{E}\otimes h_{F}$ gives the connection $D_{h_{E}\otimes h_{F}}=D_{h_{E}}\otimes 1+ 1\otimes D_{h_{F}}$ on $E\otimes F$.
If $(E,D^{\prime \prime},h_{E})$ and $(F,D^{\prime \prime},h_{F})$
are harmonic, $(E\otimes F, D^{\prime \prime}\otimes 1+1\otimes D^{\prime \prime})$ is also a harmonic Higgs bundle with the flat connection $D_{h_{E}}\otimes 1+ 1\otimes D_{h_{F}}$.

\begin{theorem}{\rm (\cite[Theorem 1]{Sim}\label{sem})}
Let $(E,D)$ be a flat bundle on $M$ whose monodromy is semi-simple.
Then  $D$ is given by  a harmonic Higgs bundle  $(E, D^{\prime\prime},h)$ that is $D=D_{h}$.
\end{theorem}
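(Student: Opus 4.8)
The plan is to reinterpret the existence of a harmonic metric as the existence of an equivariant harmonic map into a nonpositively curved symmetric space, invoke the existence theorem of Corlette (and Donaldson in the curve case), and then extract the Higgs bundle structure from the harmonic metric by a formal decomposition of the flat connection. The genuine analytic content is entirely concentrated in the existence of the harmonic map, where semisimplicity of the monodromy is used; the passage to a Higgs bundle afterward is routine linear algebra of connections.

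First I would set up the dictionary. The flat bundle $(E,D)$ corresponds to a representation $\rho:\pi_{1}(M)\to GL_{n}(\C)$, and "semi-simple monodromy" means the Zariski closure of $\rho(\pi_{1}(M))$ is reductive. A Hermitian metric $h$ on $E$ is the same datum as a $\rho$-equivariant smooth map $f:\tilde M\to X$, where $\tilde M$ is the universal cover and $X=GL_{n}(\C)/U(n)$ is the symmetric space of Hermitian inner products on $\C^{n}$, a complete simply connected space of nonpositive curvature. Under this correspondence, writing $D=d_{h}+\Psi$ for the decomposition into the $h$-unitary connection $d_{h}$ and the $h$-self-adjoint part $\Psi\in A^{1}(M,{\rm End}(E))$, the requirement that $h$ be harmonic (i.e.\ that $\Psi$ be $d_{h}$-coclosed, which is what makes $D=D_{h}$ for a harmonic Higgs bundle) is exactly the condition that $f$ be a harmonic map. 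Thus the theorem reduces to producing an equivariant harmonic $f$.

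Second, the existence of $f$. Since $X$ is nonpositively curved, an equivariant harmonic map is an energy minimizer, so I would run the direct method (or equivalently the Eells--Sampson--Corlette heat flow) on the space of finite-energy equivariant maps; because $M$ is compact, every equivariant map has finite energy, and the only way a minimizing sequence can fail to converge is by escaping to the ideal boundary $\partial_{\infty}X$. This is precisely where reductivity enters: escape to infinity would force $\rho(\pi_{1}(M))$ to fix a point of $\partial_{\infty}X$, hence to preserve a proper parabolic subgroup of $GL_{n}(\C)$, contradicting that its Zariski closure is reductive. This no-escape (properness) estimate, together with the regularity and convergence of the flow, is exactly Corlette's theorem, and it is the main obstacle of the whole statement; I would quote it as the key external input rather than reprove it.

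Third, I would extract the Higgs structure formally. With the harmonic metric $h$ in hand, split $d_{h}=\partial_{h}+\bar\partial$ and $\Psi=\theta+\bar\theta_{h}$ into types, $\theta$ being of type $(1,0)$, and set $D''=\bar\partial+\theta$ and $D'_{h}=\partial_{h}+\bar\theta_{h}$, so that $D=D''+D'_{h}=D_{h}$ by construction. Combining the harmonic equation $d_{h}^{\ast}\Psi=0$ with flatness $D^{2}=0$ and the K\"ahler identities yields $\bar\partial^{2}=0$, so $\bar\partial$ is an integrable holomorphic structure, together with $\bar\partial\theta=0$ and $\theta\wedge\theta=0$, which are precisely the conditions $(D'')^{2}=0$. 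Hence $(E,D''=\bar\partial+\theta)$ is a Higgs bundle for which $h$ is a harmonic metric and $D=D_{h}$, completing the argument. The only step requiring deep work is the existence invoked above; this final extraction is mechanical.
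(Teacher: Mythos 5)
The paper offers no internal proof here: the statement is quoted verbatim as Theorem 1 of Simpson's \emph{Higgs bundles and local systems}, and your outline does reconstruct the route Simpson himself takes (Corlette--Donaldson existence of an equivariant harmonic map, then extraction of the Higgs data from the harmonic metric). However, your third step contains a genuine gap. The passage from the harmonic equation $d_{h}^{\ast}\Psi=0$ plus flatness to the identities $\bar\partial^{2}=0$, $\bar\partial\theta=0$, $\theta\wedge\theta=0$ is \emph{not} ``routine linear algebra of connections'': pointwise, harmonicity of the metric does not imply $(D'')^{2}=0$. What is needed is the Siu--Sampson pluriharmonicity theorem -- a Bochner-type integral argument, carried out globally over the compact manifold and using the K\"ahler form in an essential way -- showing that a harmonic map from a compact K\"ahler manifold to a nonpositively curved target is pluriharmonic. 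That this step is a real theorem rather than formal manipulation is visible from the fact that Corlette's existence result holds over any compact Riemannian base, yet flat bundles over compact non-K\"ahler manifolds do not acquire Higgs structures from their harmonic metrics. So the analytic content is split between two deep inputs (Corlette \emph{and} Siu--Sampson), not concentrated in one.

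A second, smaller inaccuracy sits in your no-escape argument: it is false that fixing a point of $\partial_{\infty}X$, i.e.\ preserving a proper parabolic of $GL_{n}(\C)$, contradicts reductivity of the Zariski closure. Any reducible semisimple representation preserves a proper subspace and hence lies in a proper parabolic, while its closure is still reductive. Corlette's actual argument is not a one-step contradiction but an induction: if the minimizing sequence degenerates one obtains an invariant parabolic, semisimplicity of the monodromy lets one conjugate the representation into a Levi factor of it, and one restarts on the associated totally geodesic subspace of strictly smaller rank. Since you explicitly quote Corlette as a black box the existence step survives, but as written your justification for why the black box applies would not withstand scrutiny; the reductivity hypothesis is consumed in the inductive reduction, not in forbidding fixed points at infinity.
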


\begin{theorem}{\rm(\cite[Lemma 2.2]{Sim})}\label{foomm}
Let $(E, D^{\prime\prime},h)$ be a harmonic Higgs bundle with the flat connection $D=D^{\prime}+D^{\prime\prime}$.
Then the inclusion \[
({\rm Ker}\, D^{\prime},D^{\prime\prime})\to (A^{\ast}(E),D)
\]
and the quotient  \[
({\rm Ker}\, D^{\prime},D^{\prime\prime})\to (H_{D^{\prime}}(A^{\ast}(E)),D^{\prime\prime})=(H_{D}^{\ast}(A^{\ast}(E)), 0)
\] 
induce the cohomology isomorphisms.

\end{theorem}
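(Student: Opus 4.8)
The plan is to deduce everything from a single ``$D^{\prime}D^{\prime\prime}$-lemma'', the exact analogue of the classical $\partial\bar\partial$-lemma, and then to transcribe the Deligne--Griffiths--Morgan--Sullivan formality argument into the present setting. Throughout, $D^{\prime}=D^{\prime}_{h}=\partial_{h}+\bar\theta_{h}$ and $D^{\prime\prime}=\bar\partial+\theta$, so that $D=D^{\prime}+D^{\prime\prime}$.

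First I would record the algebraic relations among the operators. The Higgs integrability gives $(D^{\prime\prime})^{2}=0$, and for a harmonic bundle the conjugate structure satisfies $(D^{\prime})^{2}=0$ as well; flatness $D^{2}=0$ then forces $D^{\prime}D^{\prime\prime}+D^{\prime\prime}D^{\prime}=0$. Thus $D^{\prime}$ and $D^{\prime\prime}$ anticommute. In particular $D^{\prime\prime}$ preserves both ${\rm Ker}\,D^{\prime}$ and ${\rm Im}\,D^{\prime}$, so $({\rm Ker}\,D^{\prime},D^{\prime\prime})$ is a genuine subcomplex on which $D$ restricts to $D^{\prime\prime}$, and $D^{\prime\prime}$ descends to $H_{D^{\prime}}(A^{\ast}(E))$.

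The analytic heart, and the step I expect to be the main obstacle, is the K\"ahler identities for the harmonic bundle $(E,D^{\prime\prime},h)$ on the compact K\"ahler manifold $M$. Following Simpson, these yield $\Delta_{D^{\prime}}=\Delta_{D^{\prime\prime}}=\tfrac12\Delta_{D}$, so the three Hodge decompositions coincide and the spaces of $D^{\prime}$-, $D^{\prime\prime}$- and $D$-harmonic forms are identical; in particular every harmonic form is simultaneously $D^{\prime}$- and $D^{\prime\prime}$-closed. From the coincidence of Laplacians I would then extract the $D^{\prime}D^{\prime\prime}$-lemma: for a form that is both $D^{\prime}$-closed and $D^{\prime\prime}$-closed, being $D$-exact, $D^{\prime}$-exact or $D^{\prime\prime}$-exact are all equivalent to being $D^{\prime}D^{\prime\prime}$-exact. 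Its proof is the standard one: write the form as its harmonic part plus $\Delta_{D^{\prime\prime}}$ of its Green's operator, note the harmonic part vanishes by exactness, and move the remaining terms through the anticommutation relations, using $\Delta_{D^{\prime}}=\Delta_{D^{\prime\prime}}$ to pass between $D^{\prime}$ and $D^{\prime\prime}$. Once these identities are in hand the rest is a word-for-word copy of the compact K\"ahler case, so I would quote the identities from Simpson and concentrate the write-up on verifying that the argument goes through in the $E$-valued Higgs setting.

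Granted the $D^{\prime}D^{\prime\prime}$-lemma, both assertions are formal. For any $\alpha\in{\rm Ker}\,D^{\prime}$ the form $D^{\prime\prime}\alpha$ lies in ${\rm Im}\,D^{\prime\prime}\cap{\rm Ker}\,D^{\prime}={\rm Im}\,D^{\prime}D^{\prime\prime}$, hence is $D^{\prime}$-exact; therefore $D^{\prime\prime}$ acts as zero on $H_{D^{\prime}}(A^{\ast}(E))$, which identifies $(H_{D^{\prime}}(A^{\ast}(E)),D^{\prime\prime})$ with $(H_{D^{\prime}}(A^{\ast}(E)),0)$. That the quotient $({\rm Ker}\,D^{\prime},D^{\prime\prime})\to(H_{D^{\prime}}(A^{\ast}(E)),0)$ is a quasi-isomorphism is then a short diagram chase: any $D^{\prime}$-class is represented by a $D^{\prime\prime}$-cocycle after correcting by a $D^{\prime}$-exact term, using $D^{\prime\prime}\alpha=D^{\prime}D^{\prime\prime}\mu$ (surjectivity), and a $D^{\prime\prime}$-cocycle in ${\rm Ker}\,D^{\prime}$ that is $D^{\prime}$-exact is $D^{\prime\prime}$-exact inside ${\rm Ker}\,D^{\prime}$ via $\alpha=D^{\prime}D^{\prime\prime}\xi=D^{\prime\prime}(-D^{\prime}\xi)$ (injectivity). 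For the inclusion $({\rm Ker}\,D^{\prime},D^{\prime\prime})\hookrightarrow(A^{\ast}(E),D)$: surjectivity on cohomology holds because every $D$-class has a $D$-harmonic representative, which by the coincidence of harmonic spaces lies in ${\rm Ker}\,D^{\prime}$; injectivity holds because a form in ${\rm Ker}\,D^{\prime}$ that is $D$-exact is, by the lemma, $D^{\prime}D^{\prime\prime}$-exact, hence $D^{\prime\prime}$-exact within ${\rm Ker}\,D^{\prime}$. Composing the two quasi-isomorphisms yields the identification $H^{\ast}_{D}(A^{\ast}(E))\cong H_{D^{\prime}}(A^{\ast}(E))$ recorded in the statement.
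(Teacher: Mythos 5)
The paper does not prove this statement at all: it is quoted verbatim as \cite[Lemma 2.2]{Sim}, so there is no in-paper argument to compare against. Your reconstruction is correct and is essentially Simpson's own proof, which is the DGMS formality argument transplanted to harmonic bundles: the K\"ahler identities for a harmonic bundle give $\Delta_{D^{\prime}}=\Delta_{D^{\prime\prime}}=\tfrac12\Delta_{D}$, from which one extracts the principle of two types ($D^{\prime}D^{\prime\prime}$-lemma), and the two quasi-isomorphisms then follow by the formal diagram chase you give (your sign bookkeeping, e.g.\ $\alpha=D^{\prime}D^{\prime\prime}\xi=D^{\prime\prime}(-D^{\prime}\xi)$ and the correction $\alpha\mapsto\alpha+D^{\prime}\mu$, is accurate, as is the identification ${\rm Im}\,D^{\prime\prime}\cap{\rm Ker}\,D^{\prime}={\rm Im}\,D^{\prime}D^{\prime\prime}$). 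The only inputs you do not prove — $(D^{\prime})^{2}=0$ for the conjugate structure and the K\"ahler identities of Simpson's Lemma 2.1 — are exactly the ingredients Simpson himself establishes before Lemma 2.2, so quoting them is the appropriate division of labor here.
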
 

\begin{theorem}{\rm(\cite[Lemma 2.6]{Sim})}\label{leff}
Let $(E, D^{\prime\prime},h)$ be a harmonic Higgs bundle with the flat connection $D=D^{\prime}+D^{\prime\prime}$. Then for any $0\leq i\leq n$ the  linear operator
\[ [\omega]^{n-i}\wedge: H_{D}^{i}(A^{\ast}(E_{\rho})) \to H_{D}^{2n-i}(A^{\ast}(E_{\rho})) \]
is an isomorphism.
\end{theorem}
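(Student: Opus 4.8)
The plan is to adapt the classical proof of the hard Lefschetz theorem for compact K\"ahler manifolds to the Laplacian of the flat connection $D=D^{\prime}+D^{\prime\prime}$ attached to the harmonic metric $h$. Since $M$ is compact, I would first set up Hodge theory for $D$: equip $A^{\ast}(E)$ with the Hermitian inner product built from $h$ and the K\"ahler metric, let $D^{\ast}$ be the formal adjoint of $D$, and form the Laplacian $\Delta_{D}=DD^{\ast}+D^{\ast}D$. Ellipticity of $\Delta_{D}$ together with compactness of $M$ yields a Hodge decomposition $A^{\ast}(E)=\mathcal{H}^{\ast}\oplus{\rm Im}\,D\oplus{\rm Im}\,D^{\ast}$ and an isomorphism $H^{i}_{D}(A^{\ast}(E))\cong\mathcal{H}^{i}$ onto the $D$-harmonic forms.

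The technical core is to establish the twisted K\"ahler identities in the harmonic-bundle setting. Writing $D^{\prime\prime}=\bar\partial+\theta$ and $D^{\prime}=\partial_{h}+\bar\theta_{h}$ as in the preceding constructions, I would prove
\[
[\Lambda, D^{\prime\prime}]=-i(D^{\prime})^{\ast},\qquad [\Lambda, D^{\prime}]=i(D^{\prime\prime})^{\ast},
\]
the exact analogues of $[\Lambda,\bar\partial]=-i\partial^{\ast}$ and $[\Lambda,\partial]=i\bar\partial^{\ast}$. The new feature compared to the untwisted case is the Higgs contribution: $\theta\wedge$ and $\bar\theta_{h}\wedge$ are zeroth-order operators, and one must check that their commutators with $\Lambda$ reproduce the adjoint terms. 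This is a pointwise computation on symbols, using that $\theta$ is of type $(1,0)$ and $\bar\theta_{h}$ of type $(0,1)$ with values in ${\rm End}(E)$, together with the defining relation $h(\theta e,f)=h(e,\bar\theta_{h}f)$. I expect this verification---disentangling the $\theta$ and $\bar\theta_{h}$ terms in the graded commutators---to be the main obstacle.

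Granting the identities, I would deduce the proportionality of Laplacians $\Delta_{D}=2\Delta_{D^{\prime}}=2\Delta_{D^{\prime\prime}}$ by the standard argument, expanding $\Delta_{D}$ and using that the mixed terms $D^{\prime}(D^{\prime\prime})^{\ast}+(D^{\prime\prime})^{\ast}D^{\prime}$ and their conjugates vanish exactly as in the classical case once the identities above are in force. In particular $\Delta_{D}$ commutes with the Lefschetz operator $L=\omega\wedge\cdot$: indeed $L$ commutes with $D$ since $D\omega=0$, and the K\"ahler identities give $[\Lambda,\Delta_{D}]=0$, hence $[L,\Delta_{D}]=0$. Therefore $L$ preserves the space $\mathcal{H}^{\ast}$ of harmonic forms.

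Finally I would invoke $\mathfrak{sl}_{2}$ representation theory. The operators $L$, $\Lambda=L^{\ast}$, and the degree operator $H$ acting on $A^{i}(E)$ by $(i-n)\,{\rm id}$ satisfy $[L,\Lambda]=H$, $[H,L]=2L$, $[H,\Lambda]=-2\Lambda$, and all three commute with $\Delta_{D}$, so they act on $\mathcal{H}^{\ast}$ and make it a finite-dimensional $\mathfrak{sl}_{2}$-representation in which $\mathcal{H}^{i}$ is the weight space of weight $i-n$. For such a representation the raising power $L^{n-i}$ restricts to an isomorphism from the weight-$(i-n)$ space onto the weight-$(n-i)$ space for every $i\le n$. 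Under the isomorphism $\mathcal{H}^{i}\cong H^{i}_{D}(A^{\ast}(E_{\rho}))$ this is precisely the asserted isomorphism $[\omega]^{n-i}\wedge\colon H^{i}_{D}(A^{\ast}(E_{\rho}))\to H^{2n-i}_{D}(A^{\ast}(E_{\rho}))$.
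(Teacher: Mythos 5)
Your proposal is correct in outline, but note that the paper itself contains no proof of this statement: it is imported verbatim as \cite[Lemma 2.6]{Sim}, and your argument is essentially a reconstruction of Simpson's own proof there --- Hodge theory for the flat connection $D=D^{\prime}+D^{\prime\prime}$ of the harmonic metric, the twisted K\"ahler identities $[\Lambda,D^{\prime\prime}]=-i(D^{\prime})^{\ast}$, $[\Lambda,D^{\prime}]=i(D^{\prime\prime})^{\ast}$, the consequent equality of Laplacians and commutation $[L,\Delta_{D}]=0$ (which also uses $D^{\prime}D^{\prime\prime}+D^{\prime\prime}D^{\prime}=0$, available from flatness together with $(D^{\prime})^{2}=(D^{\prime\prime})^{2}=0$), and the finite-dimensional $\mathfrak{sl}_{2}$-representation on $\Delta_{D}$-harmonic forms. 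So you have taken the same route as the source the paper cites, with the only gap being the pointwise verification of the commutator identities for the zeroth-order pieces $\theta$ and $\bar{\theta}_{h}$, which you correctly identify as the technical core and which is exactly what Simpson's Section 2 supplies.
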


\begin{proof}[Proof of Theorem \ref{fol}] \  \\
 By Theorem \ref{sem} and \ref{leff}, the condition (B) holds.
By Theorem \ref{sem}, for $(A^{\ast}(E_{\alpha}), D_{\alpha})$, we have $D_{\alpha}=D^{\prime}_{\alpha}+D^{\prime\prime}_{\alpha}$ such that $D^{\prime\prime}_{\alpha}$ is a harmonic Higgs bundle.
Denote $D^{\prime}=\bigoplus_{\alpha} D^{\prime}_{\alpha}$ and $D^{\prime\prime}=\bigoplus_{\alpha} D^{\prime\prime}_{\alpha}$.
Then by properties of Higgs bundle,
$({\rm Ker}\, D^{\prime},D^{\prime\prime})$ is  a DGA, and the maps \[({\rm Ker}\, D^{\prime},D^{\prime\prime})\to (A^{\ast}(M,{\mathcal O}_{\rho})),D)\] and  \[({\rm Ker}\, D^{\prime},D^{\prime\prime})\to (H_{D}^{\ast}(A^{\ast}(M,{\mathcal O}_{\rho})), 0)\] are DGA homomorphisms, thus quasi-isomorphisms by Theorem \ref{foomm}.
Hence the condition (A) holds.
\end{proof} 

\section{Minimal models of invariant forms on solvable Lie groups with local systems}

Let $G$ be a simply connected solvable Lie group and $\g$ the Lie algebra of $G$.
Consider the diagonal representation ${\rm Ad}_{s}$ as in Section 1 and the derivation ${\rm ad}_{s}$ of  ${\rm Ad}_{s}$.
For some basis $\{X_{1},\dots X_{n}\}$ of $\g_{\C}$,  ${\rm Ad}_{s}$ is represented by diagonal matrices.
Let $\bf T$ be the Zariski-closure of ${\rm Ad}_{s}(G)$ in ${\rm Aut}(\g_{\C})$.
Let $\{ V_{\alpha}\}$ be the set of one-dimensional representations for all characters $\alpha$ of $\bf T$.
We consider $V_{\alpha}$ the representation of $\g$ which is the derivation of $\alpha\circ {\rm Ad}_{s}$.
Then we have the cochain complex of Lie algebra $(\bigwedge \g^{\ast}_{\C}\otimes V_{\alpha}, d_{\alpha})$.
Denote $d=\bigoplus_{\alpha} d_{\alpha}$.
Then $( \bigoplus_{\alpha} \bigwedge \g^{\ast}_{\C}\otimes V_{\alpha}, d)$ is a cohomologically connected DGA with coefficients in $\C$ as the last section.
By ${\rm Ad}_{s}(G)\subset {\rm Aut}(\g_{\C})$ we have ${\bf T} \subset {\rm Aut}(\g_{\C})$ and hence we have the action of $\bf T$ on  $\bigoplus_{\alpha} \bigwedge \g^{\ast}_{\C}\otimes V_{\alpha}$.
Denote  $(\bigoplus_{\alpha} \bigwedge \g^{\ast}_{\C}\otimes V_{\alpha})^{\bf T}$ the sub-DGA of $\bigoplus_{\alpha} \bigwedge \g^{\ast}_{\C}\otimes V_{\alpha}$ which consists of the $\bf T$-invariant elements of $\bigoplus_{\alpha} \bigwedge \g^{\ast}_{\C}\otimes V_{\alpha}$.
\begin{lemma}\label{incso}
We have an isomorphism
\[H^{\ast}((\bigoplus_{\alpha} \bigwedge \g^{\ast}_{\C}\otimes V_{\alpha})^{\bf T})\cong H^{\ast}(\bigoplus_{\alpha} \bigwedge \g^{\ast}_{\C}\otimes V_{\alpha}).
\]
\end{lemma}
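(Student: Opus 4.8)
Write $A=\bigoplus_{\alpha}\bigwedge\g^{\ast}_{\C}\otimes V_{\alpha}$. By the construction preceding the lemma, $\bf T$ acts on the DGA $A$ by automorphisms commuting with $d$ (on $\bigwedge\g^{\ast}_{\C}$ through the tautological inclusion ${\bf T}\subset {\rm Aut}(\g_{\C})$ and on each $V_{\alpha}$ through the character $\alpha$), and $A^{\bf T}$ is its sub-DGA of invariants; moreover each summand $(\bigwedge\g^{\ast}_{\C}\otimes V_{\alpha},d_{\alpha})$ is $\bf T$-stable and computes the finite-dimensional Lie algebra cohomology $H^{\ast}(\g;V_{\alpha})$. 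The plan is to deduce the isomorphism from two facts: that taking $\bf T$-invariants commutes with passing to cohomology, and that $\bf T$ acts trivially on $H^{\ast}(A)$.

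For the first fact I would use that $\bf T$ is diagonalizable, hence linearly reductive over $\C$, so the functor of $\bf T$-invariants is exact on $\bf T$-modules. Applying it to the short exact sequences of $\bf T$-modules relating cocycles, coboundaries and cohomology of the $\bf T$-equivariant complex $(A,d)$ yields $H^{\ast}(A^{\bf T})\cong H^{\ast}(A)^{\bf T}$. It then suffices to prove that $\bf T$ acts trivially on $H^{\ast}(A)$, since that gives $H^{\ast}(A)^{\bf T}=H^{\ast}(A)$.

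The heart of the argument, and the step I expect to be the main obstacle, is the triviality of the $\bf T$-action on cohomology. The difficulty is that the $\bf T$-action on $\bigwedge\g^{\ast}_{\C}$ is induced by the \emph{semisimple} operators ${\rm ad}_{s\xi}$ rather than the full adjoint operators ${\rm ad}_{\xi}$, so it is not the inner action to which Cartan's magic formula applies. To get around this I would fix $\xi\in\g$ and compare, on $A$, the infinitesimal action $P_{\xi}$ of ${\rm ad}_{s\xi}\in {\rm Lie}({\bf T})$ with the Chevalley--Eilenberg Lie derivative $\mathcal L_{\xi}$ (the coadjoint action of ${\rm ad}_{\xi}$ on $\bigwedge\g^{\ast}_{\C}$ together with the module action of $\xi$ on the coefficients). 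Since $\xi$ acts on each $V_{\alpha}$ precisely through ${\rm ad}_{s\xi}$, these two operators agree on the coefficient factor and differ only on $\bigwedge\g^{\ast}_{\C}$, by the coadjoint action of the nilpotent part ${\rm ad}_{n\xi}={\rm ad}_{\xi}-{\rm ad}_{s\xi}$ (extended by the identity on the coefficients); this difference commutes with $d$ and descends to cohomology.

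Finally I would play semisimplicity against nilpotence on cohomology. By Cartan's magic formula $\mathcal L_{\xi}=d\iota_{\xi}+\iota_{\xi}d$ is null-homotopic, hence zero on $H^{\ast}(A)$, so on each finite-dimensional $H^{\ast}(\g;V_{\alpha})$ the operator induced by $P_{\xi}$ equals minus that induced by ${\rm ad}_{n\xi}$. The latter is nilpotent (being induced by a nilpotent derivation of $\g$), while $P_{\xi}$ is semisimple (being the infinitesimal action of an element of the torus $\bf T$ on a finite-dimensional $\bf T$-module). An operator that is both semisimple and nilpotent is $0$, so ${\rm ad}_{s\xi}$ acts as $0$ on $H^{\ast}(A)$ for every $\xi\in\g$. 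Thus the connected Zariski-dense subgroup ${\rm Ad}_{s}(G)=\exp({\rm ad}_{s}(\g))$ acts trivially on $H^{\ast}(A)$, and since the $\bf T$-action is algebraic, $\bf T$ acts trivially as well. Combined with the first fact this proves the lemma.
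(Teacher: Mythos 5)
Your proposal is correct and is essentially the paper's own argument in infinitesimal form: the paper uses the multiplicative Jordan decomposition, noting that the action of ${\rm Ad}_{s}(\exp A)$ on the complex is the semisimple part of the action of $\exp A$ via ${\rm Ad}\otimes\alpha\circ{\rm Ad}_{s}$, whose induced action on cohomology is trivial because its derivative is the Cartan-trivial Lie derivative, while your additive version ($P_{\xi}=\mathcal{L}_{\xi}-N_{\xi}$ with $[\mathcal{L}_{\xi}]=0$, so $[P_{\xi}]$ is both semisimple and nilpotent, hence zero) is the same mechanism at the Lie algebra level. Both proofs then finish identically, extending triviality from ${\rm Ad}_{s}(G)$ to ${\bf T}$ by Zariski density and using diagonalizability (linear reductivity) of ${\bf T}$ to identify $H^{\ast}\bigl((\bigoplus_{\alpha}\bigwedge\g^{\ast}_{\C}\otimes V_{\alpha})^{\bf T}\bigr)$ with $H^{\ast}\bigl(\bigoplus_{\alpha}\bigwedge\g^{\ast}_{\C}\otimes V_{\alpha}\bigr)^{\bf T}$.
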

\begin{proof}

We show that the action  of ${\rm Ad}_{s}(G)\subset \bf T$ on the cohomology $H^{\ast}(  \bigwedge \g^{\ast}_{\C}\otimes V_{\alpha})$ is  trivial.
Consider the direct sum $\g=V\oplus \n$ as Construction \ref{con}.
Then we have ${\rm Ad}_{s}(G)={\rm Ad}_{s}(\exp (V))$ by Lemma \ref{exponnn}.
For $A\in V$, the action ${\rm Ad}_{s}(\exp (A))$ on the cochain complex $\bigwedge \g^{\ast}_{\C}\otimes V_{\alpha}$ is a semi-simple part of the action of $\exp (A)$ on $\bigwedge \g^{\ast}_{\C}\otimes V_{\alpha}$ via ${\rm Ad}\otimes \alpha\circ {\rm Ad}_{s}$.
Since the action of $G$ on the cohomology $H^{\ast}(  \bigwedge \g^{\ast}_{\C}\otimes V_{\alpha})$ via ${\rm Ad}\otimes \alpha\circ {\rm Ad}_{s}$ is  the extension of the Lie derivation on $H^{\ast}(  \bigwedge \g^{\ast}_{\C}\otimes V_{\alpha})$, this $G$-action on  $H^{\ast}(  \bigwedge \g^{\ast}_{\C}\otimes V_{\alpha})$ is trivial.
Hence for $A\in V$ the action of ${\rm Ad}_{s}(\exp (A))=(\exp ({\rm ad}_{A}))_{s}$ on the cohomology $H^{\ast}(  \bigwedge \g^{\ast}_{\C}\otimes V_{\alpha})$  is trivial.

Since $\bf T$ is the Zariski-closure of ${\rm Ad}_{s}(G)$ in ${\rm Aut}(\g_{\C})$ and the action of $\bf T$ on $\bigwedge \g^{\ast}_{\C}\otimes V_{\alpha}$ is algebraic, the action of $\bf T$ on  $H^{\ast}(  \bigwedge \g^{\ast}_{\C}\otimes V_{\alpha})$ is also trivial.
Since the action of $\bf T$ on $\bigwedge \g^{\ast}_{\C}\otimes V_{\alpha}$ is diagonalizable,
we have an isomorphism
\[H^{\ast}(  \bigwedge \g^{\ast}_{\C}\otimes V_{\alpha})\cong H^{\ast}(  \bigwedge \g^{\ast}_{\C}\otimes V_{\alpha})^{\bf T}\cong H^{\ast}(  (\bigwedge \g^{\ast}_{\C}\otimes V_{\alpha})^{\bf T}).
\]
Hence we have the lemma.
\end{proof}
Consider the unipotent hull ${\bf U}_{G}$ of $G$. 
Let ${\frak u}$ be the $\C$-Lie algebra of ${\bf U}_{G}$ and $\frak u^{\ast}$ the $\C$-dual space.
We consider the DGA $\bigwedge {\frak u}^{\ast}$ with coefficients in $\C$.
\begin{lemma}\label{uniso}
We have an isomorphism of DGA 
\[\bigwedge {\frak u}^{\ast} \cong (\bigoplus_{\alpha} \bigwedge \g^{\ast}_{\C}\otimes V_{\alpha})^{\bf T}.
\] 
\end{lemma}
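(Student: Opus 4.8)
The plan is to write down an explicit isomorphism $\Phi\colon\bigwedge\frak{u}^{\ast}\to(\bigoplus_{\alpha}\bigwedge\g^{\ast}_{\C}\otimes V_{\alpha})^{\bf T}$ on degree-one generators and then check it is a morphism of DGAs. Recall from Proposition \ref{u=n} that $\frak{u}=\bar\n_{\C}=\{X-{\rm ad}_{sX}\mid X\in\g_{\C}\}$, with bracket computed inside $\bar\g_{\C}={\rm Im}\,{\rm ad}_{s}\ltimes\g_{\C}$. The first thing I would do is fix a good basis. Since ${\bf T}$ consists of Lie algebra automorphisms of $\g_{\C}$ (the semisimple part of an element of the algebraic group ${\rm Aut}(\g_{\C})$ remains in ${\rm Aut}(\g_{\C})$), and since by Construction \ref{con} the commuting semisimple derivations ${\rm ad}_{s}(\g)$ annihilate $V$ and preserve $\n$, I can choose a basis $\{X_{1},\dots,X_{n}\}$ of $\g_{\C}$ adapted to $\g_{\C}=V_{\C}\oplus\n_{\C}$ and consisting of common eigenvectors: write $t\cdot X_{i}=\lambda_{i}(t)X_{i}$ for a character $\lambda_{i}$ of ${\bf T}$, equivalently ${\rm ad}_{sX}(X_{i})=\mu_{i}(X)X_{i}$ for the weight $\mu_{i}\in\g^{\ast}_{\C}$ (the derivative of $\lambda_i$ along ${\rm Ad}_s$). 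With this choice each $X_{i}\in V_{\C}$ has $\lambda_{i}=1$ and $\mu_{i}=0$, and every $\mu_{i}$ vanishes on $\n$. Let $\{\tilde X_{i}\}$ be the induced basis of $\frak{u}$, where $\tilde X=X-{\rm ad}_{sX}$, and $\{x_{i}\}$ its dual basis.

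Next I would define $\Phi$ as the algebra homomorphism determined by $\Phi(x_{i})=X_{i}^{\ast}\otimes v_{\lambda_{i}}$, where $v_{\lambda_{i}}$ spans $V_{\lambda_{i}}$. Each generator lands in the invariants because $X_{i}^{\ast}$ has weight $\lambda_{i}^{-1}$ while $v_{\lambda_{i}}$ has weight $\lambda_{i}$. To see $\Phi$ is a linear isomorphism onto the invariant subalgebra, I would observe that a monomial $X_{i_{1}}^{\ast}\wedge\cdots\wedge X_{i_{k}}^{\ast}\otimes v_{\alpha}$ is ${\bf T}$-fixed exactly when $\alpha=\lambda_{i_{1}}\cdots\lambda_{i_{k}}$; hence for each strictly increasing multi-index there is precisely one $\alpha$ giving an invariant, so these monomials form a basis of $(\bigoplus_{\alpha}\bigwedge\g^{\ast}_{\C}\otimes V_{\alpha})^{\bf T}$. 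Since $\Phi(x_{i_{1}}\wedge\cdots\wedge x_{i_{k}})=X_{i_{1}}^{\ast}\wedge\cdots\wedge X_{i_{k}}^{\ast}\otimes v_{\lambda_{i_{1}}\cdots\lambda_{i_{k}}}$ (using $v_{\lambda_{j}}v_{\lambda_{k}}=v_{\lambda_{j}\lambda_{k}}$ under $\bigoplus_{\alpha}V_{\alpha}\cong\C[{\bf T}]$), the map $\Phi$ carries a basis bijectively to a basis and is manifestly multiplicative.

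The main step is that $\Phi$ intertwines the differentials, which it suffices to verify on generators $x_{i}$ since both differentials are derivations and $\Phi$ is an algebra map. On the right, the differential restricted to the weight-$\lambda_{i}$ summand is $d_{\lambda_{i}}=d+\mu_{i}\wedge(\cdot)$ with $d$ the untwisted Chevalley--Eilenberg differential, so $d(X_{i}^{\ast}\otimes v_{\lambda_{i}})=\bigl(-\sum_{j<k}c^{i}_{jk}X_{j}^{\ast}\wedge X_{k}^{\ast}+\sum_{l}\mu_{i}(X_{l})X_{l}^{\ast}\wedge X_{i}^{\ast}\bigr)\otimes v_{\lambda_{i}}$, where $[X_{j},X_{k}]=\sum_{i}c^{i}_{jk}X_{i}$. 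On the left, computing $[\tilde X_{j},\tilde X_{k}]=\sum_{i}w^{i}_{jk}\tilde X_{i}$ in $\bar\g_{\C}$ via $[{\rm ad}_{s}\g,{\rm ad}_{s}\g]=0$ gives $w^{i}_{jk}=c^{i}_{jk}-\mu_{k}(X_{j})\delta_{ik}+\mu_{j}(X_{k})\delta_{ij}$, whence $d_{\frak u}x_{i}=-\sum_{j<k}c^{i}_{jk}x_{j}\wedge x_{k}+\sum_{l}\mu_{i}(X_{l})x_{l}\wedge x_{i}$. Applying $\Phi$ reproduces the same two families of terms, but a priori with coefficients $v_{\lambda_{j}\lambda_{k}}$ and $v_{\lambda_{l}\lambda_{i}}$; so $\Phi(d_{\frak u}x_{i})=d(\Phi x_{i})$ reduces to two weight-matching facts: (a) $c^{i}_{jk}\neq0\Rightarrow\lambda_{j}\lambda_{k}=\lambda_{i}$, which is precisely ${\bf T}$-equivariance of the Lie bracket; and (b) $\mu_{i}(X_{l})\neq0\Rightarrow\lambda_{l}=1$. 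Fact (b) is the crux, and it is exactly where the adapted basis matters: $\mu_{i}(X_{l})\neq0$ forces ${\rm ad}_{sX_{l}}\neq0$, hence $X_{l}\in V_{\C}$ by the choice of basis, hence $\lambda_{l}=1$. The content of (b) is that the twisting weights arise only from the semisimple directions $V$, so the ``extra'' terms of the two differentials stay in the correct character summand; once this is in place everything else is bookkeeping of signs and structure constants to be matched against the Chevalley--Eilenberg conventions of Section 3.
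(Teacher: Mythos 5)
Your proposal is correct and follows essentially the same route as the paper's own proof: fix a diagonalizing basis, identify the ${\bf T}$-invariants as the free algebra on the degree-one generators $X_i^{\ast}\otimes v_{\lambda_i}$ by weight considerations, and match structure constants against the bracket of ${\frak u}=\{X-{\rm ad}_{sX}\}$ from Proposition \ref{u=n}. Your weight-matching facts (a) and (b) are precisely the dual formulation of the paper's key step (that $[{\frak u},{\frak u}]\subset\n_{\C}$ and ${\rm ad}_{s}\vert_{\n}=0$ force the correction terms to stay in the right character summand); the only cosmetic differences are your use of a basis adapted to $V_{\C}\oplus\n_{\C}$ and your writing the isomorphism as an explicit map rather than passing through the intermediate Lie algebra ${\frak h}$.
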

\begin{proof}
Let $\{x_{1}, \dots , x_{n}\}$ be the dual of a basis $\{X_{1},\dots, X_{n}\}$ of $\g$ such that ${\rm Ad}_{s}$ is represented by diagonal matrices.
We define characters $\alpha_{i}$ as $t\cdot X_{i}=\alpha_{i}(t)X_{i}$ for $t\in \bf T$.
Then we have $t\cdot x_{i}=\alpha^{-1}_{i}(t)x_{i}$.
Hence the vector space $(\bigoplus_{\alpha} \bigwedge^{1} \g^{\ast}_{\C}\otimes V_{\alpha})^{\bf T}$ is spanned by $\{x_{1}\otimes v_{\alpha_{1}},\dots ,x_{n}\otimes v_{\alpha_{n}}\}$ where $V_{\alpha_{i}}\ni v_{\alpha_{i}}\not=0$.
For 
\[\omega=\sum_{i_{1},\dots i_{p}, \alpha}a_{i_{1},\dots i_{p}, \alpha}x_{i_{1}}\wedge\dots \wedge x_{i_{p}}v_{\alpha} \in (\bigoplus_{\alpha} \bigwedge^{p} \g^{\ast}_{\C}\otimes V_{\alpha})^{\bf T},\] 
since any $x_{i_{1}}\wedge\dots \wedge x_{i_{p}}v_{\alpha}$ is an eigenvector of the action of $\bf T$, if $a_{i_{1},\dots i_{p}, \alpha}\not=0$ then $x_{i_{1}}\wedge\dots \wedge x_{i_{p}}v_{\alpha}$ is also a $\bf T$-invariant element.
Since  we have
\[t\cdot  x_{i_{1}}\wedge\dots \wedge x_{i_{p}}=\alpha^{-1}_{i_{1}}(t)\dots \alpha^{-1}_{i_{p}}(t) x_{i_{1}}\wedge\dots \wedge x_{i_{p}}\]
 for $t\in \bf T$,
 we have 
 \[x_{i_{1}}\wedge\dots \wedge x_{i_{p}}\otimes v_{\alpha}= x_{i_{1}}v_{\alpha_{i_{1}}}\wedge\dots \wedge x_{i_{p}}v_{\alpha_{i_{p}}}.\]
Thus the DGA $(\bigoplus_{\alpha} \bigwedge \g^{\ast}_{\C}\otimes V_{\alpha})^{\bf T}$ is generated by $\{x_{1}\otimes v_{\alpha_{1}},\dots ,x_{n}\otimes v_{\alpha_{n}}\}$.
Consider the Maurer-Cartan equations
\[dx_{k}=-\sum _{ij}c^{k}_{ij}x_{i}\wedge x_{j}
\]
and denote ${\rm ad}_{sX_{i}}(X_{j})=a_{ij}X_{j}$.
Since ${\rm Ad}_{sg}(X_{k})=\alpha_{i}({\rm Ad}_{sg})X_{k}$ for $g\in G$,
we have $dv_{\alpha_{k}}=\sum^{n}_{i=1} {\rm ad}_{sX_{i}}(X_{k})x_{i}v_{\alpha_{k}}=\sum^{n}_{i=1} a_{ik}x_{i}v_{ \alpha_{k}}$.
Then we have 
\[d_{\alpha_{k}}(x_{k}\otimes v_{\alpha_{k}})=-\sum_{ij} (c^{k}_{ij}x_{i}\wedge x_{j}\otimes v_{\alpha_{k}}-a_{ik}x_{i}\wedge x_{k}\otimes v_{\alpha_{k}}).
\]
Hence the DGA $(\bigoplus_{\alpha} \bigwedge \g^{\ast}_{\C}\otimes V_{\alpha})^{\bf T}$ is isomorphic to a free DGA generated degree 1 elements $\{y_{1},\dots y_{n}\}$ such that 
\[d(y_{k})=-\sum_{ij}(c^{k}_{ij}y_{i}\wedge y_{j}-a_{ik}y_{i}\wedge y_{k}).
\]
Let $\frak h$ be the Lie algebra which is the dual of the free DGA $(\bigoplus_{\alpha} \bigwedge \g^{\ast}_{\C}\otimes V_{\alpha})^{\bf T}$ and $\{Y_{1},\dots, Y_{n}\}$ the dual basis of $\{y_{1},\dots y_{n}\}$.
It is sufficient to show ${\frak h}\cong {\frak u}$.
Then the bracket of $\frak h$ is given by
\[ [Y_{i}, Y_{j}]=\sum_{k}c^{k}_{ij}Y_{k}-a_{ij}Y_{j}+a_{ji}Y_{i}.
\]
Otherwise by Section 2.3, we have ${\frak u}\cong \{X-{\rm ad}_{sX}| X\in \g_{\C}\}\subset D(\g_{\C})\ltimes \g_{\C}$.
For the basis $\{ X_{1}-{\rm ad}_{sX_{1}},\dots, X_{n}-{\rm ad}_{sX_{n}}\}$ of $\frak u$, we have
\[[X_{i}-{\rm ad}_{sX_{i}}, X_{j}-{\rm ad}_{sX_{j}}]=\sum_{k}c^{k}_{ij}X_{k}-a_{ij}X_{j}+a_{ji}X_{i}.
\]
By $[\g,\g]\subset {\frak n}$, we have $[{\frak u},{\frak u}]\subset {\frak n}_{\C}$ where $\frak n$ is the nilradical of $\g$.
By this we have 
\[\sum_{k}c^{k}_{ij}X_{k}-a_{ij}X_{j}+a_{ji}X_{i}\in\n_{\C},
\]
and hence we have
\[ad_{s\sum_{k}c^{k}_{ij}X_{k}-a_{ij}X_{j}+a_{ji}X_{i}}=0.
\]
This gives 
\begin{multline*}
[X_{i}-{\rm ad}_{sX_{i}}, X_{j}-{\rm ad}_{sX_{j}}]\\
=\sum_{k}c^{k}_{ij}(X_{k}-{\rm ad}_{sX_{k}})-a_{ij}(X_{j}-{\rm ad}_{sX_{j}})+a_{ji}(X_{i}-{\rm ad}_{sX_{i}}).
\end{multline*}
This gives an isomorphism ${\frak h}\cong {\frak u}$.
Hence the lemma follows.
\end{proof}
 Since ${\rm Ad}_{s}(G)$ is Zariski-dense in $\bf T$, ${\rm Ad}_{s}(G)$-invariant elements are also $\bf T$-invariant.
In particular we have the following lemma.
\begin{lemma}\label{TIN}
Let $T={\bf T}(\R)$ be the real points of $\bf T$.
Then we have 
\[(\bigoplus_{\alpha} \bigwedge \g^{\ast}_{\C}\otimes V_{\alpha})^{ T}\cong (\bigoplus_{\alpha} \bigwedge \g^{\ast}_{\C}\otimes V_{\alpha})^{\bf T}\cong \bigwedge {\frak u}^{\ast}.\]
\end{lemma}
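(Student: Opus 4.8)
The plan is to reduce the whole statement to one standard fact: for an algebraic group acting algebraically on a vector space, the invariants of a Zariski-dense subgroup coincide with the invariants of the full group. The second isomorphism $(\bigoplus_{\alpha} \bigwedge \g^{\ast}_{\C}\otimes V_{\alpha})^{\bf T}\cong \bigwedge {\frak u}^{\ast}$ has already been established in Lemma \ref{uniso}, so the only genuinely new content is the first isomorphism, which compares the invariants under the real points $T={\bf T}(\R)$ with the invariants under the whole $\R$-algebraic group $\bf T$.

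First I would record the relevant chain of subgroups. Since ${\rm Ad}_{s}(G)$ consists of real automorphisms, we have ${\rm Ad}_{s}(G)\subseteq T\subseteq {\bf T}$, and passing to invariants in $W:=\bigoplus_{\alpha} \bigwedge \g^{\ast}_{\C}\otimes V_{\alpha}$ reverses these inclusions:
\[
W^{\bf T}\subseteq W^{T}\subseteq W^{{\rm Ad}_{s}(G)}.
\]
It therefore suffices to prove that the two outer terms agree, i.e. $W^{{\rm Ad}_{s}(G)}=W^{\bf T}$; the sandwich then forces $W^{T}=W^{\bf T}$, which is exactly the first asserted isomorphism. The core step is the density argument signalled immediately before the statement. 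The $\bf T$-action on $W$ is induced from the inclusion ${\bf T}\subset {\rm Aut}(\g_{\C})$ (acting on $\bigwedge \g^{\ast}_{\C}$) together with the finitely many characters $\alpha$ on the lines $V_{\alpha}$, hence it is algebraic. Consequently, for each $w\in W$ the orbit map $t\mapsto t\cdot w$ is a morphism of varieties and the stabilizer $\{t\in{\bf T}\mid t\cdot w=w\}$ is a Zariski-closed subgroup of $\bf T$. If $w\in W^{{\rm Ad}_{s}(G)}$, this closed subgroup contains ${\rm Ad}_{s}(G)$, hence contains its Zariski closure, which is all of $\bf T$ by the very definition of $\bf T$; thus $w\in W^{\bf T}$. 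This yields $W^{{\rm Ad}_{s}(G)}\subseteq W^{\bf T}$, and the reverse inclusion is trivial.

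I do not anticipate a serious obstacle, as the argument is essentially formal once two points are checked: that the $\bf T$-action on $W$ is genuinely algebraic, so that stabilizers are Zariski-closed, and that ${\rm Ad}_{s}(G)$ is Zariski-dense in $\bf T$. The latter is precisely how $\bf T$ was defined, and the former follows from $\bf T$ acting through ${\rm Aut}(\g_{\C})$ and the finitely many characters $\alpha$. The only mild care needed is to phrase the stabilizer as the closed subscheme where the morphism $t\mapsto t\cdot w-w$ vanishes, so that containing the dense subgroup ${\rm Ad}_{s}(G)$ forces it to contain the whole closure $\bf T$. Combining $W^{T}=W^{\bf T}$ with Lemma \ref{uniso} then gives both isomorphisms of the lemma.
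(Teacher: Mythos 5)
Your proposal is correct and takes essentially the same approach as the paper: the paper's entire justification is the remark preceding the lemma that Zariski density of ${\rm Ad}_{s}(G)$ in $\bf T$ makes ${\rm Ad}_{s}(G)$-invariant elements $\bf T$-invariant, combined with Lemma \ref{uniso}, and your sandwich $W^{\bf T}\subseteq W^{T}\subseteq W^{{\rm Ad}_{s}(G)}\subseteq W^{\bf T}$ via Zariski-closed stabilizers is exactly the standard elaboration of that remark. One cosmetic slip: the direct sum runs over \emph{all} (infinitely many) characters of $\bf T$, not finitely many, but since any element of the direct sum has only finitely many nonzero components, your stabilizer argument applies verbatim on a finite-dimensional $\bf T$-stable subspace.
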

Later we use this lemma.

Denote $A^{\ast}(\g_{\C},{\rm ad}_{s})=\bigoplus_{\alpha} \bigwedge \g^{\ast}_{\C}\otimes V_{\alpha}$.
By  lemma \ref{incso} and \ref{uniso} we have:
\begin{theorem}\label{qua}
We have a quasi-isomorphism of DGAs
\[\bigwedge {\frak u}^{\ast} \to A^{\ast}(\g_{\C},{\rm ad}_{s}).
\] 
Thus
$\bigwedge {\frak u}^{\ast}$ is the minimal model  of $ A^{\ast}(\g_{\C},{\rm ad}_{s})$.
\end{theorem}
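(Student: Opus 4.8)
The plan is to assemble the quasi-isomorphism directly from the two structural lemmas preceding the statement, since together they already contain all of its content. First I would invoke Lemma \ref{uniso}, which supplies an isomorphism of DGAs
\[
\bigwedge {\frak u}^{\ast}\cong \Bigl(\bigoplus_{\alpha}\bigwedge \g^{\ast}_{\C}\otimes V_{\alpha}\Bigr)^{\bf T}.
\]
The right-hand side is the sub-DGA of $\bf T$-invariants inside the full DGA $A^{\ast}(\g_{\C},{\rm ad}_{s})=\bigoplus_{\alpha}\bigwedge \g^{\ast}_{\C}\otimes V_{\alpha}$, and the inclusion is a morphism of DGAs because the $\bf T$-action is by automorphisms commuting with both the wedge product and the differential $d$.

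Next I would apply Lemma \ref{incso}, which states precisely that this inclusion of invariants induces an isomorphism on cohomology. Composing the DGA isomorphism of Lemma \ref{uniso} with the inclusion then produces a morphism $\bigwedge {\frak u}^{\ast}\to A^{\ast}(\g_{\C},{\rm ad}_{s})$ inducing a cohomology isomorphism, that is, a quasi-isomorphism, which establishes the first assertion.

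For the second assertion I would verify that $\bigwedge {\frak u}^{\ast}$ is itself a minimal Sullivan algebra. Since ${\bf U}_{G}$ is unipotent by Proposition \ref{hulll}, its Lie algebra ${\frak u}$ is nilpotent; hence the Chevalley--Eilenberg complex $\bigwedge {\frak u}^{\ast}$ is freely generated in degree one, and the differential on a degree-one generator is the dual of the Lie bracket and so lands in $\bigwedge^{2}{\frak u}^{\ast}$, i.e. $d$ is decomposable. The nilpotency of ${\frak u}$ furnishes, via the lower central series, the increasing filtration of the generating space required for minimality, so $\bigwedge {\frak u}^{\ast}$ is a minimal model of $A^{\ast}(\g_{\C},{\rm ad}_{s})$. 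I do not expect a genuine obstacle here: the substantive work has already been done in Lemmas \ref{incso} and \ref{uniso}, and the only point demanding care is to use explicitly that ${\frak u}$ is \emph{nilpotent}, coming from the unipotence of ${\bf U}_{G}$ rather than merely the solvability of $G$, in order to guarantee that $d$ has no linear part and admits the required filtration.
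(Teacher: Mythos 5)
Your proposal is correct and is essentially the paper's own argument: the paper derives Theorem \ref{qua} precisely by combining Lemma \ref{uniso} (the DGA isomorphism onto the $\bf T$-invariants) with Lemma \ref{incso} (the inclusion of invariants is a quasi-isomorphism). Your added justification of minimality — that ${\frak u}$ is nilpotent because ${\bf U}_{G}$ is unipotent, so the generated-in-degree-one Chevalley--Eilenberg complex admits the filtration required of a minimal Sullivan algebra — is the correct and standard reason the paper leaves implicit.
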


\section{Cohomology of  $A^{\ast}(G/\Gamma,{\mathcal O}_{{\rm Ad}_{s\vert_{\Gamma}}})$}

Consider the two DGA $A^{\ast}(\g_{\C},{\rm ad}_{s})$  and $A^{\ast}(G/\Gamma,{\mathcal O}_{{\rm Ad}_{s\vert_{\Gamma}}})$.
For any  character $\alpha$ of an algebraic group $\bf T$ which is the Zariski-closure of ${\rm Ad}_{s}(G) $ in ${\rm Aut}(\g_{\C})$.
We have the inclusion 
\[\bigwedge \g^{\ast}_{\C}\otimes V_{\alpha}\cong (A^{\ast}_{\C}(G)\otimes V_{\alpha})^{G} \subset(A^{\ast}_{\C}(G)\otimes V_{\alpha})^{\Gamma}\cong A^{\ast}(G/\Gamma, E_{\alpha \circ{\rm Ad}_{s\vert_{\Gamma}}})\ .
\]
Thus we have the morphism of DGAs 
\[\phi: A^{\ast}(\g_{\C},{\rm ad}_{s})\to A^{\ast}(G/\Gamma,{\mathcal O}_{{\rm Ad}_{s\vert_{\Gamma}}}).\]

\begin{proposition}\label{cohoin}
The morphism $\phi: A^{\ast}(\g_{\C},{\rm ad}_{s})\to A^{\ast}(G/\Gamma,{\mathcal O}_{{\rm Ad}_{s\vert_{\Gamma}}})$ is injective and
the induced map
\[\phi^{\ast}:H^{\ast}(A^{\ast}(\g_{\C},{\rm ad}_{s}))\to H^{\ast}(A^{\ast}(G/\Gamma,{\mathcal O}_{{\rm Ad}_{s\vert_{\Gamma}}}))
\]
is also injective.
\end{proposition}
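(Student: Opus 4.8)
The plan is to prove both assertions cheaply, spending the real effort only on a compatibility between two Poincaré-duality pairings. The chain-level injectivity is immediate: by construction $\phi$ is the direct sum over characters $\alpha$ of the inclusions $(A^{\ast}_{\C}(G)\otimes V_{\alpha})^{G}\hookrightarrow (A^{\ast}_{\C}(G)\otimes V_{\alpha})^{\Gamma}$, and an inclusion of subspaces of $A^{\ast}_{\C}(G)\otimes V_{\alpha}$ is injective. Since each summand $\bigwedge \g^{\ast}_{\C}\otimes V_{\alpha}$ is a subcomplex (the differential is $d=\bigoplus_{\alpha}d_{\alpha}$), both $H^{\ast}(A^{\ast}(\g_{\C},{\rm ad}_{s}))$ and $H^{\ast}(A^{\ast}(G/\Gamma,{\mathcal O}_{{\rm Ad}_{s\vert_{\Gamma}}}))$ split as direct sums indexed by $\alpha$, and $\phi^{\ast}$ respects this splitting. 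Thus I would reduce the claim to showing that for each $\alpha$ the map $\phi^{\ast}_{\alpha}\colon H^{\ast}(\g_{\C};V_{\alpha})\to H^{\ast}(G/\Gamma,E_{\alpha\circ{\rm Ad}_{s\vert_{\Gamma}}})$ is injective. I also record that, because $G/\Gamma$ is compact, $G$ is unimodular, so $\g$ carries a nonzero invariant volume form $x_{1}\wedge\cdots\wedge x_{n}$ (with $n=\dim G$) which is $d$-closed and represents a generator of $H^{n}(\bigwedge \g^{\ast}_{\C})=H^{n}(\g_{\C};\C)\cong\C$.

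Next I would set up two pairings. On the invariant side I use Poincaré duality for the Lie algebra cohomology of the unimodular algebra $\g$: cup product followed by the evaluation $V_{\alpha}\otimes V_{\alpha^{-1}}\to\C$ gives a pairing
\[
H^{k}(\g_{\C};V_{\alpha})\otimes H^{n-k}(\g_{\C};V_{\alpha^{-1}})\longrightarrow H^{n}(\g_{\C};\C)\cong\C,
\]
which is nondegenerate because $\g$ is unimodular and $V_{\alpha^{-1}}=V_{\alpha}^{\ast}$ (one-dimensionality of the top cohomology is exactly where unimodularity enters; alternatively the nondegeneracy follows from Theorem \ref{qua}, since $\bigwedge{\frak u}^{\ast}$ computes this cohomology and ${\frak u}$ is nilpotent, hence unimodular). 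On the solvmanifold side I use ordinary Poincaré duality for local systems on the compact oriented manifold $G/\Gamma$: wedge product together with the bundle pairing $E_{\alpha}\otimes E_{\alpha^{-1}}\to E_{1}$ into the trivial flat bundle, followed by $\int_{G/\Gamma}$, gives a nondegenerate pairing
\[
H^{k}(G/\Gamma,E_{\alpha})\otimes H^{n-k}(G/\Gamma,E_{\alpha^{-1}})\longrightarrow H^{n}(G/\Gamma,\C)\cong\C .
\]
Both pairings are well defined on cohomology by Stokes' theorem, and in each case components of different characters pair to zero.

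The heart of the argument is that $\phi$ intertwines these pairings up to the positive factor $\mathrm{vol}(G/\Gamma)$. For invariant elements $a\in\bigwedge \g^{\ast}_{\C}\otimes V_{\alpha}$ and $b\in\bigwedge \g^{\ast}_{\C}\otimes V_{\alpha^{-1}}$ of complementary degrees, the wedge $a\wedge b$ is a $G$-invariant element of $\bigwedge^{n}\g^{\ast}_{\C}\otimes V_{1}$, that is, a constant multiple $c\,(x_{1}\wedge\cdots\wedge x_{n})$ of the invariant volume form with trivial coefficients, where $c$ is precisely the value of the Lie-algebra pairing of $a$ and $b$. Applying $\phi$ and integrating yields $\int_{G/\Gamma}\phi(a)\wedge\phi(b)=\int_{G/\Gamma}\phi(a\wedge b)=c\cdot\mathrm{vol}(G/\Gamma)$, since a $G$-invariant top form with trivial coefficients descends to $c$ times the Riemannian volume; hence $\langle\phi(a),\phi(b)\rangle_{G/\Gamma}=\mathrm{vol}(G/\Gamma)\cdot\langle a,b\rangle_{\g}$.

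Finally I would assemble these facts. Given $0\neq[a]\in H^{k}(\g_{\C};V_{\alpha})$, nondegeneracy of the Lie-algebra pairing produces a closed $b\in\bigwedge^{n-k}\g^{\ast}_{\C}\otimes V_{\alpha^{-1}}$ with $\langle a,b\rangle_{\g}\neq0$; the compatibility just described then gives $\langle\phi(a),\phi(b)\rangle_{G/\Gamma}=\mathrm{vol}(G/\Gamma)\,\langle a,b\rangle_{\g}\neq0$. Since $\phi(b)$ is closed and the $G/\Gamma$-pairing is well defined on cohomology, $[\phi(a)]$ cannot vanish, so $\phi^{\ast}_{\alpha}$, and hence $\phi^{\ast}$, is injective. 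The step I expect to be the main obstacle is establishing the compatibility of the pairings: one must verify that a $G$-invariant top-degree form with trivial coefficients integrates to a nonzero multiple of its Lie-algebra value, which is exactly where unimodularity of $G$ (forced by the existence of the lattice $\Gamma$) enters; everything else is formal.
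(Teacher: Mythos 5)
Your reduction to individual characters contains the gap that is, in fact, the entire content of this proposition. You claim that both cohomologies ``split as direct sums indexed by $\alpha$'' (characters of $\bf T$) and that $\phi^{\ast}$ respects this splitting, and later that ``components of different characters pair to zero.'' But the target DGA $A^{\ast}(G/\Gamma,{\mathcal O}_{{\rm Ad}_{s\vert_{\Gamma}}})$ is by definition indexed by characters of the Zariski closure of ${\rm Ad}_{s}(\Gamma)$, not of ${\bf T}=\overline{{\rm Ad}_{s}(G)}$. When Mostow's condition fails, two \emph{distinct} characters $\alpha\neq\beta$ of $\bf T$ can satisfy $\alpha\circ{\rm Ad}_{s\vert_{\Gamma}}=\beta\circ{\rm Ad}_{s\vert_{\Gamma}}$ (e.g.\ $G=\R\ltimes_{\phi}\R^{2}$ with $\phi(t)$ the rotation by $\pi t$ and $\Gamma=\Z\ltimes\Z^{2}$: the character $\mathrm{diag}(1,z,z^{-1})\mapsto z^{2}$ is nontrivial on $\bf T$ but trivial on ${\rm Ad}_{s}(\Gamma)$). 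Then the source summands for $\alpha$ and $\beta$ map into the \emph{same} summand of the target, your splitting claim fails, and your final ``hence $\phi^{\ast}$ is injective'' does not follow from per-summand injectivity: a sum of nonzero classes coming from different $\bf T$-characters with equal restriction to $\Gamma$ could a priori be exact. Note that if Mostow's condition held, Corollary \ref{ISOS} would already give the stronger isomorphism statement; this proposition exists precisely to handle the degenerate case your argument assumes away.

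Your Poincar\'e-duality mechanism also cannot dismiss these cross-terms for free. If $\alpha\neq\beta$ agree on $\Gamma$ and you pair $\phi(a_{\beta})$ with $\phi(b)$, $b\in\bigwedge\g^{\ast}_{\C}\otimes V_{\alpha^{-1}}$, the wedge lands in top-degree forms with values in the \emph{trivial} flat bundle (since $\beta\alpha^{-1}$ is trivial on $\Gamma$), so the integral is perfectly well defined and equals $c\int_{G/\Gamma}(\beta/\alpha)({\rm Ad}_{s}(x))\,d\mu$ for a constant $c$; the integrand is a nonconstant function on $G/\Gamma$, and nothing formal forces this to vanish. Proving that it does vanish is exactly the key step of the paper's proof: there one writes $\frac{\alpha}{\beta}\,d\mu=d\bigl(\frac{\alpha}{\beta}\eta\bigr)$ for a suitable invariant $(n-1)$-form $\eta$, checks that $\frac{\alpha}{\beta}\eta$ descends to $G/\Gamma$, and applies Stokes' theorem. (The paper packages this as averaging retractions $\varphi_{\beta}$ with $\varphi_{\beta}\circ\phi\vert_{\bigwedge\g^{\ast}_{\C}\otimes V_{\alpha}}$ equal to the identity for $\beta=\alpha$ and to $0$ for $\beta\neq\alpha$.) Your per-summand argument via the compatibility $\langle\phi(a),\phi(b)\rangle_{G/\Gamma}=\mathrm{vol}(G/\Gamma)\langle a,b\rangle_{\g}$ is a legitimate alternative to the paper's retraction $\varphi_{\alpha}\circ\phi={\rm id}$ for that part, but without the Stokes vanishing for characters that merge on $\Gamma$, the proof is incomplete at its crux.
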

\begin{proof}
Since $G$ has a lattice $\Gamma$, $G$ is unimodular(see \cite[Remark 1.9]{R}).
Choose a Haar measure $d\mu$ such that the volume of $G/\Gamma$ is $1$.
We define a map $\varphi_{\alpha}: (A^{\ast}_{\C}(G)\otimes V_{\alpha})^{\Gamma}\to \bigwedge \g^{\ast}_{\C}\otimes V_{\alpha}$ as 
\[\varphi _{\alpha}(\omega\otimes v_{\alpha})(X_{1},\dots X_{p})=\int_{G/\Gamma}\frac{\omega_{x}}{\alpha(x)}(X_{1},\dots X_{p}) d\mu \cdot v_{\alpha}
\]
for $\omega\otimes v_{\alpha}\in (A^{p}_{\C}(G)\otimes V_{\alpha})^{\Gamma}$, $X_{1},\dots , X_{p}\in \g_{C}$.
Then each $\varphi _{\alpha}$ is a morphism of cochain complexes and we have $\varphi_{\alpha}\circ \phi_{ \vert_{\bigwedge \g^{\ast}_{\C}\otimes V_{\alpha}}}={\rm id}_{\vert_{ \bigwedge \g^{\ast}_{\C}\otimes V_{\alpha}}}$ (see \cite[Remark 7.30]{R}).
Thus the restriction 
\[\phi^{\ast} :H^{\ast}(\bigwedge \g^{\ast}_{\C}\otimes V_{\alpha})\to H^{\ast}(A^{\ast}(G/\Gamma, E_{\alpha}))
\]
is injective.
By this it is sufficient to show that two distinct characters $\alpha, \beta$ with $ \alpha\circ {\rm Ad}_{s\vert_{\Gamma}}=\beta\circ {\rm Ad}_{s\vert_{\Gamma}}$ satisfy $\varphi_{\beta}\circ\phi_{\vert_{\bigwedge \g^{\ast}_{\C}\otimes V_{\alpha}}}=0$.
For $\omega\otimes v_{\alpha}\in \bigwedge \g^{\ast}_{\C}\otimes V_{\alpha}$, we have
\[\varphi_{\beta}\circ\phi_{\vert_{\bigwedge \g^{\ast}_{\C}\otimes V_{\alpha}}}(\omega\otimes v_{\alpha})=\int_{G/\Gamma}\frac{\alpha(x)}{\beta(x)}\omega_{x}(X_{1},\dots X_{p}) d\mu \cdot v_{\alpha}.\]
Since $\omega\in \bigwedge \g^{\ast}_{\C}$, $\omega_{x}(X_{1},\dots X_{p})$ is constant on $G/\Gamma$.
Let $\lambda=\frac{\beta}{\alpha}d(\frac{\alpha}{\beta})$.
Then $\lambda$ is a $G$-invariant form.
Choose $\eta \in  \bigwedge \g^{\ast}_{\C}$ such that $\lambda \wedge \eta=d\mu$.
Then we have
\[d\left(\frac{\alpha}{\beta}\eta\right)=\frac{\alpha}{\beta} \lambda \wedge \eta=\frac{\alpha}{\beta}d\mu.
\]
By $ \alpha\circ {\rm Ad}_{s\vert_{\Gamma}}=\beta\circ {\rm Ad}_{s\vert_{\Gamma}}$, $\frac{\alpha}{\beta}\eta$ is $\Gamma$-invariant and 
we can consider $\frac{\alpha}{\beta}\eta$ a differential form on $G/\Gamma$.
Hence by Stokes' theorem, we have
\begin{multline*}
\int_{G/\Gamma}\frac{\alpha(x)}{\beta(x)}\omega_{x}(X_{1},\dots X_{p}) d\mu=\omega (X_{1},\dots X_{p}) \int_{G/\Gamma}\frac{\alpha(x)}{\beta(x)} d\mu \\
=\omega (X_{1},\dots X_{p}) \int_{G/\Gamma} d\left(\frac{\alpha}{\beta}\eta\right)=0.
\end{multline*}
This prove the proposition.
\end{proof}

\begin{corollary}\label{ISOS}
Let $G$ be a simply connected solvable Lie group with a lattice $\Gamma$.
We suppose ${\rm Ad}(G)$ and ${\rm Ad}(\Gamma)$ have the same Zariski-closure in ${\rm Aut}(\g_{\C})$.
Then we have an isomorphism
\[H^{\ast}(A^{\ast}(G/\Gamma,{\mathcal O}_{{\rm Ad}_{s\vert_{\Gamma}}}))\cong H^{\ast}( A^{\ast}(\g_{\C},{\rm ad}_{s})).
\]
\end{corollary}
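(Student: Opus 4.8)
The plan is to reduce the statement to Mostow's theorem (Theorem \ref{Moss}) applied one character at a time, once I have checked that, under the hypothesis, the two DGAs decompose along the \emph{same} indexing set of characters.

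First I would record that the hypothesis forces ${\bf T}$ to be the Zariski-closure of ${\rm Ad}_{s\vert_{\Gamma}}(\Gamma)$ and not merely of ${\rm Ad}_{s}(G)$. Writing ${\bf G}$ for the common Zariski-closure of ${\rm Ad}(G)$ and ${\rm Ad}(\Gamma)$, the proof of Proposition \ref{adddm} exhibits ${\bf T}$ as a maximal torus together with a splitting ${\bf G}={\bf T}\ltimes {\bf U}({\bf G})$; the quotient morphism $q:{\bf G}\to {\bf G}/{\bf U}({\bf G})\cong {\bf T}$ carries ${\rm Ad}(g)$ to its semi-simple part ${\rm Ad}_{s}(g)$. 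Since $q$ is a surjective morphism of algebraic groups and ${\rm Ad}(\Gamma)$ is Zariski-dense in ${\bf G}$, its image ${\rm Ad}_{s\vert_{\Gamma}}(\Gamma)=q({\rm Ad}(\Gamma))$ is Zariski-dense in ${\bf T}$. Hence two characters of ${\bf T}$ that agree on ${\rm Ad}_{s\vert_{\Gamma}}(\Gamma)$ agree on all of ${\bf T}$, so the assignment $\alpha\mapsto \alpha\circ {\rm Ad}_{s\vert_{\Gamma}}$ is injective on characters.

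With this in hand both sides decompose over the same set of characters $\alpha$ of ${\bf T}$: we have $A^{\ast}(\g_{\C},{\rm ad}_{s})=\bigoplus_{\alpha} \bigwedge \g^{\ast}_{\C}\otimes V_{\alpha}$ and $A^{\ast}(G/\Gamma,{\mathcal O}_{{\rm Ad}_{s\vert_{\Gamma}}})=\bigoplus_{\alpha} A^{\ast}(G/\Gamma, E_{\alpha\circ {\rm Ad}_{s\vert_{\Gamma}}})$, the latter summands now being genuinely distinct by the injectivity just established. The morphism $\phi$ is the direct sum of its restrictions $\phi_{\alpha}:\bigwedge \g^{\ast}_{\C}\otimes V_{\alpha}\to A^{\ast}(G/\Gamma, E_{\alpha\circ {\rm Ad}_{s\vert_{\Gamma}}})$, each of which is precisely the inclusion of invariant forms from Section 3. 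By Proposition \ref{adddm} every representation $\alpha\circ {\rm Ad}_{s}$ is $\Gamma$-admissible, so Theorem \ref{Moss} shows that each $\phi_{\alpha}$ induces an isomorphism on cohomology.

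Finally I would pass to cohomology: both complexes being direct sums of the subcomplexes indexed by $\alpha$, and cohomology commuting with direct sums, the induced map $\phi^{\ast}=\bigoplus_{\alpha}\phi_{\alpha}^{\ast}$ is an isomorphism, which is the assertion of the corollary. Injectivity of $\phi^{\ast}$ is in any case already furnished by Proposition \ref{cohoin}, so the genuine content over that proposition is surjectivity, supplied exactly by the character-by-character application of Mostow's theorem. The main obstacle is the bookkeeping of the indexing sets: one must be sure that after imposing the equal-Zariski-closure hypothesis the target splits into one summand per character of ${\bf T}$, with no two source summands collapsing onto a single flat bundle on $G/\Gamma$. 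This is the role of the density argument in the first step, and it is the only place where the hypothesis of the corollary is used.
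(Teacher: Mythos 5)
Your proof is correct and takes essentially the same route as the paper's: Proposition \ref{adddm} supplies $\Gamma$-admissibility of each $\alpha\circ{\rm Ad}_{s}$, Theorem \ref{Moss} then gives the cohomology isomorphism summand by summand, and the direct-sum decompositions are matched up. The only difference is that you make explicit the Zariski-density argument showing both DGAs are indexed by the same set of characters of ${\bf T}$ (so no two source summands collapse onto one flat bundle), a point the paper compresses into ``by the definitions \dots the corollary follows.''
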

\begin{proof}
Let $\bf T$ be the Zariski-closure of ${\rm Ad}_{s}(G)$.
For any $1$-dimensional representation $V_{\alpha}$ of $\bf T$ given by a character $\alpha$ of $\bf T$, we consider a flat bundle $E_{\alpha}$ on $G/\Gamma$ given by the representation $\alpha\circ {\rm Ad}_{s}$ and the two cochain complex $A^{\ast}(G/\Gamma, E_{\alpha})$ and  $\bigwedge \g^{\ast}_{\C}\otimes V_{\alpha}$ as above.
Then since $\alpha\circ {\rm Ad}_{s}$ is $\Gamma$-admissible, by Theorem \ref{Moss} we have an isomorphism
\[H^{\ast}(\bigwedge \g^{\ast}\otimes V_{\alpha})\cong H^{\ast}(A^{\ast}(G/\Gamma, E_{\alpha})).\]
By the definitions of $A^{\ast}(G/\Gamma,{\mathcal O}_{{\rm Ad}_{s\vert_{\Gamma}}})$ and $ A^{\ast}(\g_{\C},{\rm ad}_{s})$ the corollary follows.
\end{proof}

\section{Extensions}\label{Extens}
In this Section we extend Corollary \ref{ISOS} to the case of general sovmanifolds.
To do this we consider infra-solvmanifolds which are generalizations of solvmanifolds.
\subsection{Infra-solvmanifold}
Let $G$ be a simply connected solvable Lie group.
We consider the affine transformation group ${\rm Aut}(G)\ltimes G$ and the projection $p:{\rm Aut}(G)\ltimes G\to{\rm Aut}(G)$.
Let $\Gamma\subset {\rm Aut}(G)\ltimes G$ be a discrete subgroup such that $p(\Gamma)$ is contained in a compact subgroup of ${\rm Aut}(G)$ and the quotient $G/\Gamma$ is compact.
We call $G/\Gamma$ an infra-solvmanifold.
\begin{theorem}\cite[Theorem 1.5]{B}\label{Bau}
For two infra-solvmanifolds $G_{1}/\Gamma_{1}$ and $G_{2}/\Gamma_{2}$, if $\Gamma_{1}$ is isomorphic to $\Gamma_{2}$, then $G_{1}/\Gamma_{1}$ is diffeomorphic to $G_{2}/\Gamma_{2}$.
\end{theorem}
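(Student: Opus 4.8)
The plan is to treat the algebraic hull as a canonical invariant of the abstract fundamental group and to show that each infra-solvmanifold is diffeomorphic to a standard model built solely from that group. First I would record the group-theoretic structure. The fundamental group $\Gamma$ of an infra-solvmanifold $G/\Gamma$ is torsion-free (the action is free) and contains the subgroup $\Delta=\Gamma\cap G$ of pure translations; since $p(\Gamma)$ is a discrete subgroup of a compact group it is finite, so $\Delta$ has finite index, and $G/\Delta$ is a finite cover of the compact $G/\Gamma$. Thus $\Delta$ is a torsion-free polycyclic lattice in $G$ with finite holonomy $\Phi=\Gamma/\Delta$, and $\Gamma$ is virtually polycyclic. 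In particular $\Gamma$ falls within the scope of Proposition \ref{ttt}, so it possesses an algebraic hull ${\bf H}_{\Gamma}$ with unipotent radical ${\bf U}_{\Gamma}$, and ${\bf U}_{\Gamma}={\bf U}_{\Delta}={\bf U}_{G}$.

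Next I would exploit functoriality. By the uniqueness asserted in Proposition \ref{ttt} (and the argument of Lemma \ref{inde}), the assignment $\Gamma\mapsto{\bf H}_{\Gamma}$ is canonical: given an abstract isomorphism $\Gamma_{1}\cong\Gamma_{2}$, transporting the embedding $\Gamma_{2}\hookrightarrow{\bf H}_{\Gamma_{2}}(\R)$ back to $\Gamma_{1}$ produces an algebraic hull of $\Gamma_{1}$, which by uniqueness is isomorphic to ${\bf H}_{\Gamma_{1}}$. Hence $\Gamma_{1}\cong\Gamma_{2}$ induces an isomorphism ${\bf H}_{\Gamma_{1}}\cong{\bf H}_{\Gamma_{2}}$ carrying ${\bf U}_{\Gamma_{1}}$ onto ${\bf U}_{\Gamma_{2}}$. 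Writing $U_{i}={\bf U}_{\Gamma_{i}}(\R)$, the embedding $\Gamma_{i}\hookrightarrow{\bf H}_{\Gamma_{i}}(\R)$ together with the semidirect splitting of the hull into its maximal torus and unipotent radical (as in Lemma \ref{HGHG}) realizes $\Gamma_{i}$ as a cocompact discrete subgroup of $\Aut(U_{i})\ltimes U_{i}$ with holonomy inside a maximal compact subgroup of the reductive quotient ${\bf H}_{\Gamma_{i}}/{\bf U}_{\Gamma_{i}}$. Since property $(3)$ of the hull gives $\dim U_{i}=\mathrm{rank}\,\Gamma_{i}=\dim G_{i}$, the quotient $U_{i}/\Gamma_{i}$ is a \emph{standard} infra-solvmanifold depending only on the isomorphism type of $\Gamma_{i}$, and the isomorphism of hulls identifies $U_{1}/\Gamma_{1}$ with $U_{2}/\Gamma_{2}$.

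The remaining and decisive step is to show that each given $G_{i}/\Gamma_{i}$ is diffeomorphic to its standard model $U_{i}/\Gamma_{i}$. Here $G_{i}$ and $U_{i}$ need not be isomorphic as Lie groups ($U_{i}$ is nilpotent while $G_{i}$ may not be), so one cannot use a homomorphism; instead I would build a $\Gamma_{i}$-equivariant diffeomorphism $G_{i}\to U_{i}$ directly. The point is that both $G_{i}$ and $U_{i}={\bf U}_{G_{i}}(\R)$ are cocompact subgroups of the common real hull $H_{G_{i}}={\bf T}(\R)\ltimes G_{i}={\bf T}(\R)\ltimes U_{i}$ supplied by Lemma \ref{HGHG} and Proposition \ref{u=n}. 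Using the exponential-type coordinates of Lemma \ref{exponnn} on both groups, one straightens the affine $\Gamma_{i}$-action on $G_{i}$ to the standard action on $U_{i}$ by conjugating inside $H_{G_{i}}$ along the maximal torus ${\bf T}(\R)$; the resulting conjugacy is polynomial along the unipotent directions and smooth overall, and its $\Gamma_{i}$-equivariance lets it descend to the compact quotients. Chaining the three diffeomorphisms $G_{1}/\Gamma_{1}\cong U_{1}/\Gamma_{1}\cong U_{2}/\Gamma_{2}\cong G_{2}/\Gamma_{2}$ then proves the theorem.

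The main obstacle is precisely this last straightening step: comparing two cocompact affine actions with the same algebraic hull while keeping track of the finite holonomy $\Phi_{i}$. The difficulty is that the comparison is not between Lie groups but between two sections of one hull, so the argument must combine the uniqueness of the hull, a common maximal compact subgroup housing both holonomy representations, and a smoothing of the conjugacy; verifying that the descended map is a genuine diffeomorphism of the quotients, rather than merely a continuous equivariant bijection, is the technical heart of the proof.
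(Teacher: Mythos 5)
You are comparing against a statement the paper never proves: Theorem \ref{Bau} is quoted from Baues \cite{B} as an external result, so the only fair comparison is with Baues's own argument. Your outline (attach to the abstract group $\Gamma$ a standard model built from its algebraic hull, use uniqueness of the hull to make it depend only on the isomorphism type of $\Gamma$, then show every infra-solvmanifold with fundamental group $\Gamma$ is diffeomorphic to that model) is indeed the strategy of Baues's proof. But your execution has a genuine error at the very first step. From discreteness of $\Gamma$ in ${\rm Aut}(G)\ltimes G$ you cannot conclude that $p(\Gamma)$ is discrete --- projections of discrete subgroups can be dense --- and in fact the holonomy of an infra-solvmanifold need \emph{not} be finite; this is precisely where infra-solvmanifolds differ from infra-nilmanifolds, for which generalized Bieberbach theorems do give finiteness. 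Concretely, let $G=\R\ltimes_{\theta}\C$ with $\theta(t)z=e^{\sqrt{-1}t}z$, let $\alpha\in{\rm Aut}(G)$ be $\alpha(t,z)=(t,e^{-\sqrt{-1}}z)$, and let $\Gamma\subset{\rm Aut}(G)\ltimes G$ be generated by $(\alpha,(1,0))$, $({\rm id},(0,1))$, $({\rm id},(0,\sqrt{-1}))$. One checks $\Gamma\cong\Z^{3}$, it is discrete, and it acts on $G\cong\R^{3}$ by the standard unit translations, so $G/\Gamma$ is a compact infra-solvmanifold (a $3$-torus); yet $p(\Gamma)$ is an infinite dense subgroup of a circle in ${\rm Aut}(G)$, and $\Gamma\cap G\cong\Z^{2}$ is neither a lattice in $G$ nor of finite index in $\Gamma$. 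Hence your $\Delta$, your finite holonomy $\Phi$, and the identification ${\bf U}_{\Gamma}={\bf U}_{\Delta}={\bf U}_{G}$ all collapse. (What is true, and what Section \ref{Extens} of this paper uses following \cite{B}, is that a finite-index subgroup of $\Gamma$ is a lattice in a simply connected solvable group constructed \emph{inside} ${\bf H}_{\Gamma}$, which need not be the given $G$; note also that $\pi_{1}$ of an infra-solvmanifold is in general only virtually polycyclic, so even invoking Proposition \ref{ttt} requires Baues's extension of the hull construction beyond the torsion-free polycyclic case.)

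The second gap is that your decisive "straightening" step is asserted rather than proved, and the assertion cannot even be set up as stated: since $\Gamma_{i}$ is a subgroup of ${\rm Aut}(G_{i})\ltimes G_{i}$ but not of $G_{i}$, Lemma \ref{HGHG} does not place $\Gamma_{i}$ inside $H_{G_{i}}={\bf T}(\R)\ltimes G_{i}$, so there is no common ambient group in which to "conjugate along the maximal torus." Relating the affine data $\Gamma_{i}\subset{\rm Aut}(G_{i})\ltimes G_{i}$ (with merely precompact, possibly infinite holonomy) to the algebraic hull of the abstract group $\Gamma_{i}$, and then producing a smooth $\Gamma_{i}$-equivariant identification of $G_{i}$ with the unipotent radical, is exactly the content of Baues's theorem. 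As you yourself concede, this is the technical heart; in the proposal it amounts to restating what has to be shown, so the argument reduces the statement to itself rather than proving it.
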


\subsection{Extensions for infra-solvmanifolds}
Let $\Gamma$ be a torsion-free polycyclic group.
 and ${\bf H}_{\Gamma}$ be the algebraic hull.
Then there exists a finite  index normal subgroup $\Delta$ of $\Gamma$ and a simply connected solvable  subgroup $G$ of ${\bf H}_{\Gamma}$ such that $\Delta$ is a lattice of $G$, and $G$ and $\Delta$ have the same Zariski-closure in ${\bf H}_{\Gamma}$ (see \cite[Proposition 2.9]{B}). 
Since the Zariski-closure of $\Delta$ in ${\bf H}_{\Gamma}$ is finite index normal subgroup of ${\bf H}_{\Gamma}$,  this group is the algebraic hull ${\bf H}_{\Delta}$ of $\Delta$ by the properties in Proposition \ref{ttt}.
By ${\rm rank}\, \Gamma=\dim G$,  ${\bf H}_{\Delta}$ is also the algebraic hull ${\bf H}_{G}$ of $G$.
Hence we have the commutative diagram
\[\xymatrix{
	G \ar[r]& {\bf H}_{\Delta}(={\bf H}_{G}) \ar[r]&{\bf H}_{\Gamma}   \\
	\Delta\ar[u]\ar[ru]	 \ar[r]&\Gamma \ar[ru]
 }.
\] 
Since $\Delta$ is a finite index normal subgroup of $\Gamma$, by this diagram ${\bf H}_{\Delta}$ is a finite index normal subgroup of ${\bf H}_{\Gamma}$.
We suppose ${\bf H}_{\Gamma}/{\bf U}_{\Gamma}$ is diagonalizable.
Let $\bf T$ and ${\bf T}^{\prime}$ be  maximal daiagonalizable subgroups of ${\bf H}_{\Gamma}$ and ${\bf H}_{\Delta}$.
Then we have decompositions ${\bf H}_{\Gamma}={\bf T}\ltimes {\bf U}_{\Gamma}$, ${\bf H}_{\Delta}={\bf T}^{\prime}\ltimes {\bf U}_{\Gamma}$.
Since ${\bf T}/{\bf T}^{\prime}={\bf H}_{\Gamma}/{\bf H}_{\Delta}$ is a finite group,
we have a finite subgroup ${\bf T}^{\prime\prime}$ of $\bf T$ such that ${\bf T}={\bf T}^{\prime\prime} {\bf T}^{\prime}$(see \cite[Proposition 8.7]{Bor}).

\begin{lemma}\label{GAMM}
${\bf H}_{\Gamma}=\Gamma {\bf H}_{\Delta}$.
\end{lemma}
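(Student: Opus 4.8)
The plan is to prove $\mathbf{H}_\Gamma = \Gamma \mathbf{H}_\Delta$ by exhibiting both inclusions, where the nontrivial content is the inclusion $\mathbf{H}_\Gamma \subset \Gamma \mathbf{H}_\Delta$. First I would record the structural decompositions already established: from the setup we have $\mathbf{H}_\Gamma = \mathbf{T} \ltimes \mathbf{U}_\Gamma$ and $\mathbf{H}_\Delta = \mathbf{T}^\prime \ltimes \mathbf{U}_\Gamma$ with the \emph{same} unipotent radical $\mathbf{U}_\Gamma$, together with the finite subgroup $\mathbf{T}^{\prime\prime} \subset \mathbf{T}$ satisfying $\mathbf{T} = \mathbf{T}^{\prime\prime}\mathbf{T}^\prime$. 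Since $\Delta$ is a lattice in $G$ whose Zariski-closure is $\mathbf{H}_\Delta$, we have $\Delta \subset \mathbf{H}_\Delta$, and hence $\Gamma \mathbf{H}_\Delta \subset \mathbf{H}_\Gamma$, giving one inclusion immediately.

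For the reverse inclusion, the key reduction is that because $\mathbf{H}_\Gamma = \mathbf{T}\ltimes \mathbf{U}_\Gamma = \mathbf{T}^{\prime\prime}\mathbf{T}^\prime \ltimes \mathbf{U}_\Gamma = \mathbf{T}^{\prime\prime}\mathbf{H}_\Delta$, it suffices to show that the finite part $\mathbf{T}^{\prime\prime}$ is contained in $\Gamma\mathbf{H}_\Delta$; once that is done, $\mathbf{H}_\Gamma = \mathbf{T}^{\prime\prime}\mathbf{H}_\Delta \subset \Gamma\mathbf{H}_\Delta \cdot \mathbf{H}_\Delta = \Gamma\mathbf{H}_\Delta$ since $\mathbf{H}_\Delta$ is a subgroup. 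Equivalently, since $\mathbf{T}^{\prime\prime}$ surjects onto the finite quotient $\mathbf{T}/\mathbf{T}^\prime \cong \mathbf{H}_\Gamma/\mathbf{H}_\Delta$, I would instead argue at the level of this finite quotient group: I want to show that the composite $\Gamma \hookrightarrow \mathbf{H}_\Gamma \twoheadrightarrow \mathbf{H}_\Gamma/\mathbf{H}_\Delta$ is surjective. This is where the hypothesis that $\Delta$ is a finite-index \emph{normal} subgroup of $\Gamma$ with the commutative diagram relating $\Delta \subset G$, $\Gamma$, and the hulls does the real work.

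The main step is therefore to verify this surjectivity. I would use that $\Delta = \Gamma \cap \mathbf{H}_\Delta$ (or at least that the image of $\Gamma$ in $\mathbf{H}_\Gamma/\mathbf{H}_\Delta$ has the right size): the natural map $\Gamma/\Delta \to \mathbf{H}_\Gamma/\mathbf{H}_\Delta$ induced by $\Gamma \subset \mathbf{H}_\Gamma$ and $\Delta \subset \mathbf{H}_\Delta$ is a homomorphism of finite groups, and I must show it is onto. Surjectivity follows from Zariski-density: $\mathbf{H}_\Gamma$ is the Zariski-closure of $\Gamma$, so the image $\Gamma \cdot \mathbf{H}_\Delta$ is Zariski-dense in $\mathbf{H}_\Gamma$; but $\Gamma\mathbf{H}_\Delta$ is a union of finitely many cosets of $\mathbf{H}_\Delta$ (as $[\mathbf{H}_\Gamma:\mathbf{H}_\Delta]$ is finite), hence is already Zariski-closed, and a Zariski-dense closed subset of $\mathbf{H}_\Gamma$ must be all of $\mathbf{H}_\Gamma$. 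This yields $\mathbf{H}_\Gamma = \Gamma\mathbf{H}_\Delta$ directly.

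The step I expect to be the main obstacle is the density-to-closedness argument: I must be careful that $\Gamma\mathbf{H}_\Delta$ really is a finite union of $\mathbf{H}_\Delta$-cosets and therefore Zariski-closed. This uses that $\mathbf{H}_\Delta$ is a finite-index (normal) subgroup of $\mathbf{H}_\Gamma$, which was established from the commutative diagram together with $\mathrm{rank}\,\Gamma = \dim G$ forcing $\mathbf{H}_\Delta = \mathbf{H}_G$. Granting that each coset $\gamma\mathbf{H}_\Delta$ is Zariski-closed and there are finitely many of them, their finite union is closed, and Zariski-density of $\Gamma$ in $\mathbf{H}_\Gamma$ (part (1) of Proposition \ref{ttt}) forces the union to exhaust $\mathbf{H}_\Gamma$. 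Combined with the trivial inclusion $\Gamma\mathbf{H}_\Delta \subset \mathbf{H}_\Gamma$, this completes the proof.
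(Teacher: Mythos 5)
Your proof is correct and follows essentially the same route as the paper: both arguments rest on the Zariski-density of $\Gamma$ in ${\bf H}_{\Gamma}$ combined with the finiteness of the index of ${\bf H}_{\Delta}$ in ${\bf H}_{\Gamma}$. The only difference is cosmetic — the paper passes to the finite quotient ${\bf H}_{\Gamma}/{\bf H}_{\Delta}$ and notes that the Zariski-dense image $q(\Gamma)$ must be the whole finite group, whereas you stay inside ${\bf H}_{\Gamma}$ and observe that $\Gamma{\bf H}_{\Delta}$, being a finite union of closed cosets, is Zariski-closed and contains the dense subgroup $\Gamma$.
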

\begin{proof}
Consider the quotient $q:{\bf H}_{\Gamma}\to {\bf H}_{\Gamma}/{\bf H}_{\Delta}$.
Since $\Gamma$ is Zariski-dense in ${\bf H}_{\Gamma}$, $q(\Gamma)$ is Zariski-dense in ${\bf H}_{\Gamma}/{\bf H}_{\Delta}$.
Since ${\bf H}_{\Gamma}/{\bf H}_{\Delta}$ is a finite group, $q(\Gamma)={\bf H}_{\Gamma}/{\bf H}_{\Delta}$.
Thus we have 
\[\Gamma  {\bf H}_{\Delta}=\Gamma {\bf T}^{\prime}\ltimes {\bf U}_{\Gamma}={\bf T}^{\prime\prime} {\bf T}^{\prime}\ltimes {\bf U}_{\Gamma}={\bf H}_{\Gamma}.
\]
\end{proof}
Let $H_{\Gamma}={\bf H}_{\Gamma}(\R)$, $T^{\prime}={\bf T}^{\prime}(\R)$ and $ T^{\prime\prime} ={\bf T}^{\prime\prime} (\R)$.
Then by Lemma \ref{HGHG} and $ H_{G}=H_{\Delta}$,  we have $H_{\Gamma}=T^{\prime}T^{\prime\prime}\ltimes G$.
Hence we have $\Gamma\subset H_{\Gamma}\subset {\rm Aut}(G)\ltimes G$.
Since $\Delta$ is a lattice of $G$ and a finite index normal subgroup of $\Gamma$,
$\Gamma$ is a discrete subgroup of ${\rm Aut}(G)\ltimes G$ and $G/\Gamma$ is compact and hence an infra-sovmanifold.

\begin{theorem}\label{INFRAI}
Let $\Gamma$ be a torsion-free polycyclic group and  $\Gamma\to {\bf H}_{\Gamma}$ be the algebraic hull of $\Gamma$.
Suppose $ {\bf H}_{\Gamma}/ {\bf U}_{\Gamma}$ is diagonalizable.
Let ${\frak u}$ be the Lie algebra of ${\bf U}_{\Gamma}$.
Let $\rho$ be the composition 
\[\Gamma\to {\bf H}_{\Gamma}\to {\bf H}_{\Gamma}/ {\bf U}_{\Gamma}.
\]
Then we have a quasi-isomorphism
\[\bigwedge {\frak u}^{\ast}\to A^{\ast}(G/\Gamma, {\mathcal O}_{\rho}).\]
\end{theorem}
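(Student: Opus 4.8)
The plan is to deduce the statement from the solvmanifold case, Theorem \ref{GENN1}, along the finite covering built into the construction. By the discussion preceding the theorem, $\Delta$ is a finite-index normal lattice in the simply connected solvable group $G$, the solvmanifold $G/\Delta$ carries the algebraic hull ${\bf H}_{\Delta}={\bf T}^{\prime}\ltimes{\bf U}_{\Gamma}$ with the same unipotent hull ${\bf U}_{\Gamma}$ as ${\bf H}_{\Gamma}={\bf T}\ltimes{\bf U}_{\Gamma}$, and $\Gamma\subset H_{\Gamma}\subset{\rm Aut}(G)\ltimes G$ with the finite deck group $F:=\Gamma/\Delta$ realized through the finite part ${\bf T}^{\prime\prime}$ of ${\bf T}={\bf T}^{\prime\prime}{\bf T}^{\prime}$. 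Thus $\pi\colon G/\Delta\to G/\Gamma$ is a regular covering with group $F$, and since everything is over $\C$, taking $F$-invariants is exact on cohomology. The whole argument is to identify $A^{\ast}(G/\Gamma,{\mathcal O}_{\rho})$ with the $F$-invariants of the pulled-back complex on $G/\Delta$ and then to compute those invariants by means of invariant forms.

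First I would record the covering identity $A^{\ast}(G/\Gamma,{\mathcal O}_{\rho})\cong A^{\ast}(G/\Delta,\pi^{\ast}{\mathcal O}_{\rho})^{F}$, where $F$ acts by deck transformations together with the equivariant structure on the local system. Writing $B=\bigoplus_{\alpha\in X({\bf T})}\bigwedge\g^{\ast}_{\C}\otimes V_{\alpha}$ for the invariant-forms DGA formed with all characters of the full torus ${\bf T}$, I would show that the natural inclusion $B\to A^{\ast}(G/\Delta,\pi^{\ast}{\mathcal O}_{\rho})$ is an $F$-equivariant quasi-isomorphism. Since $\rho|_{\Delta}$ factors through ${\bf T}^{\prime}={\bf H}_{\Delta}/{\bf U}_{\Gamma}$ and hence coincides with ${\rm Ad}_{s\vert_{\Delta}}$ by Lemma \ref{inde}, each pulled-back summand is one of the flat bundles treated in Corollary \ref{ISOS}; applying that corollary character by character gives the quasi-isomorphism, while $F$-equivariance is automatic because $F\subset{\bf T}$ normalizes $G$ and so preserves the $G$-invariant forms. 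Passing to $F$-invariants yields a quasi-isomorphism $B^{F}\to A^{\ast}(G/\Gamma,{\mathcal O}_{\rho})$.

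It then remains to compare $B^{F}$ with $\bigwedge{\frak u}^{\ast}$. Running the computation of Lemma \ref{uniso} verbatim but with the full torus ${\bf T}$ shows $B^{{\bf T}}\cong\bigwedge{\frak u}^{\ast}$: only the structure constants of $\g$ and the weights of ${\rm ad}_{s}$ enter, and these are unchanged when one enlarges ${\bf T}^{\prime}$ to ${\bf T}$, so the ${\bf T}$-invariant subalgebra is again the free DGA dual to ${\frak u}=\{X-{\rm ad}_{sX}\mid X\in\g_{\C}\}$. Because $F$ acts through ${\bf T}$, one has $B^{{\bf T}}\subseteq B^{F}$, and I claim this inclusion is a quasi-isomorphism. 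Indeed, by the argument of Lemma \ref{incso} the connected group ${\rm Ad}_{s}(G)$, and hence its Zariski-closure ${\bf T}^{\prime}$, acts trivially on $H^{\ast}(B)$; since ${\bf T}={\bf T}^{\prime\prime}{\bf T}^{\prime}$ and $F$ acts through ${\bf T}^{\prime\prime}$, adjoining the cohomologically trivial factor ${\bf T}^{\prime}$ does not alter the invariants, so $H^{\ast}(B)^{F}=H^{\ast}(B)^{{\bf T}}$. As ${\bf T}$ and $F$ act semisimply, invariants commute with cohomology, so $H^{\ast}(B^{{\bf T}})\cong H^{\ast}(B^{F})$, and the composite $\bigwedge{\frak u}^{\ast}\cong B^{{\bf T}}\to B^{F}\to A^{\ast}(G/\Gamma,{\mathcal O}_{\rho})$ is the desired quasi-isomorphism.

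The point that needs the most care is this last comparison, and it is genuinely subtle: $F$ does \emph{not} act trivially on the cohomology of a single twisted summand — for the untwisted complex this is precisely why the Betti numbers of an infra-solvmanifold $G/\Gamma$ drop below those of its solvmanifold cover $G/\Delta$. What rescues the statement is that ${\mathcal O}_{\rho}$ assembles all characters of ${\bf T}$ at once, so that $F$ merely permutes and twists the summands and, after taking invariants of the entire sum, the coincidence $H^{\ast}(B)^{F}=H^{\ast}(B)^{{\bf T}}$ forced by the triviality of the connected part ${\bf T}^{\prime}$ restores the full algebra $\bigwedge{\frak u}^{\ast}$. The remaining technical obligations — verifying that ${\rm Ad}(G)$ and ${\rm Ad}(\Delta)$ share a Zariski-closure so that Corollary \ref{ISOS} applies (which follows from the property that $G$ and $\Delta$ are Zariski-dense in the same subgroup of ${\bf H}_{\Gamma}$ together with the equality of unipotent radicals noted before Proposition \ref{ttt}), and keeping track of the multiplicities in $\pi^{\ast}{\mathcal O}_{\rho}$ coming from the restriction $X({\bf T})\to X({\bf T}^{\prime})$ — are routine once this structural picture is in place.
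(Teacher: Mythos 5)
Your proposal follows the same skeleton as the paper's own proof: pass to the finite normal cover $G/\Delta\to G/\Gamma$, use Corollary \ref{ISOS} (i.e.\ Mostow's theorem, Theorem \ref{Moss}) summand by summand to replace the twisted complex on $G/\Delta$ by the invariant-forms DGA $B$, kill the connected torus on cohomology via the argument of Lemma \ref{incso}, and finish with a weight-pairing computation (your ``Lemma \ref{uniso} run with the full torus'' is literally the paper's identification $(\bigoplus_{\alpha''}\bigwedge{\frak u}^{\ast}\otimes V_{\alpha''})^{T''}=\bigoplus_{\alpha''}A_{(\alpha'')^{-1}}\otimes V_{\alpha''}\cong\bigwedge{\frak u}^{\ast}$). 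Most of these steps are sound, modulo small slips (e.g.\ $F\not\subset{\bf T}$; what is true is that $\Gamma$ normalizes $G$ inside $H_{\Gamma}=T'T''\ltimes G$ and $\Delta\subset G$ acts trivially on $B$, so $F$ acts on $B$; also $\gamma=t_{\gamma}g_{\gamma}$ acts on the $V_{\alpha}$-factors through $\rho(\gamma)$, not through $t_{\gamma}$ alone, so the compatibility of the $F$-action with the torus action needs a word).

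The genuine gap is the assertion $H^{\ast}(B)^{F}=H^{\ast}(B)^{\bf T}$, which you justify only by the triviality of ${\bf T}'$ on $H^{\ast}(B)$ plus ``$F$ acts through ${\bf T}''$''. That gives $H^{\ast}(B)^{F}=H^{\ast}(B)^{F{\bf T}'}$, i.e.\ an upper bound on what $F$ can do, but not the needed lower bound: to conclude $H^{\ast}(B)^{F{\bf T}'}=H^{\ast}(B)^{{\bf T}''{\bf T}'}=H^{\ast}(B)^{\bf T}$ you must know that the image of $F=\Gamma/\Delta$ in the finite group ${\bf T}/{\bf T}'\cong{\bf H}_{\Gamma}/{\bf H}_{\Delta}$ is the \emph{whole} group. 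If that image were proper, the invariants $H^{\ast}(B)^{F}$ would be strictly larger than $H^{\ast}(B)^{\bf T}$; indeed the failure is visible already in degree zero, since $H^{0}(B)$ contains one line $V_{\alpha}$ for every character $\alpha$ of ${\bf T}/{\bf T}'$, and $H^{0}(B)^{F}=\C$ exactly when no nontrivial such $\alpha$ kills $\rho(\Gamma)$, i.e.\ exactly when $\rho(\Gamma)$ fills up ${\bf T}/{\bf T}'$. This surjectivity is precisely the paper's Lemma \ref{GAMM} (${\bf H}_{\Gamma}=\Gamma{\bf H}_{\Delta}$), proved from the Zariski-density of $\Gamma$ in its algebraic hull together with the finiteness of ${\bf H}_{\Gamma}/{\bf H}_{\Delta}$ --- a density input your argument never invokes, and without which the statement you are proving is simply false for a subgroup of $\Gamma$ containing $\Delta$ with smaller image. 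Once you add this one lemma (the paper applies it at the DGA level, rewriting $((\cdots)^{H_{\Delta}})^{\Gamma/\Delta}=(\cdots)^{\Gamma H_{\Delta}}=(\cdots)^{H_{\Gamma}}$, whereas you would apply it on cohomology), your proof closes up and coincides in substance with the paper's.
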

\begin{proof}
In this proof for a DGA $A$ with a group $G$-action, we denote $(A)^{G}$ the sub DGA which consists of $G$-invariant elements of $A$.
Consider decompositions ${\bf H}_{\Gamma}={\bf T}\ltimes {\bf U}_{\Gamma}$, ${\bf H}_{\Delta}={\bf T}^{\prime}\ltimes {\bf U}_{\Gamma}$ as above.
Let $\{V_{\alpha}\}$ be the set of $1$-dimensional representations of $\bf T$ for  all characters $ \alpha$ of ${\bf T}$.
Consider the DGA $\bigoplus_{\alpha} A^{\ast}(G)\otimes V_{\alpha}$ with the derivation $d$
given by 
\[d(\omega\otimes v_{\alpha})=(d\omega )\otimes v_{\alpha} \ \ \ \ \ \ \omega\in A^{\ast}(G), \ \ v_{\alpha}\in V_{\alpha}\]
and the products given by 
\[(\omega_{1}\otimes v_{\alpha})\wedge (\omega_{2}\otimes v_{\beta})=(\omega_{1}\wedge \omega_{2}) \otimes (v_{\alpha}\otimes v_{\beta}).
\]
Then by the definition, we have 
\[A^{\ast}(G/\Gamma, {\mathcal O}_{\rho})=(\bigoplus_{\alpha} A^{\ast}(G)\otimes V_{\alpha})^{\Gamma}.
\]
Let $\{V_{\alpha^{\prime}}\}$ and $\{ V_{\alpha^{\prime\prime}}\}$ be the sets of $1$-dimensional representations of ${\bf T}^{\prime}$ and ${\bf T}^{\prime\prime}$ for all characters $\alpha^{\prime}$ of ${\bf T}^{\prime}$ and $\alpha^{\prime\prime}$ of  ${\bf T}^{\prime\prime}$.
By ${\bf T}={\bf T}^{\prime}{\bf T}^{\prime\prime}$, we have $\{V_{\alpha}\}=\{V_{\alpha^{\prime}}\otimes V_{\alpha^{\prime\prime}}\}$.
Then we have
\[H^{\ast}(A^{\ast}(G/\Gamma, {\mathcal O}_{\rho}))=H^{\ast}\left(\left(\bigoplus_{\alpha^{\prime},\alpha^{\prime\prime}} A^{\ast}(G)\otimes (V_{\alpha^{\prime}}\otimes V_{\alpha^{\prime\prime}})\right)^{\Gamma}\right).
\]
Since $\Delta$ is a finite index normal subgroup of $\Gamma$, we have 
\begin{multline*}
H^{\ast}\left(\left(\bigoplus_{\alpha^{\prime},\alpha^{\prime\prime}} A^{\ast}(G)\otimes (V_{\alpha^{\prime}}\otimes V_{\alpha^{\prime\prime}})\right)^{\Gamma}\right)\\
\cong H^{\ast}\left( \left(\bigoplus_{\alpha^{\prime},\alpha^{\prime\prime}} A^{\ast}(G)\otimes (V_{\alpha^{\prime}}\otimes V_{\alpha^{\prime\prime}})\right)^{\Delta}\right)^{\Gamma/\Delta}.
\end{multline*}
Since $\Delta \subset {\bf T}^{\prime}\ltimes {\bf U}_{\Gamma}$, for a character $\alpha^{\prime\prime}$ of ${\bf T}^{\prime\prime}$ $\Delta$ acts trivially on $V_{\alpha^{\prime\prime}}$.
Hence we have
\begin{multline*}H^{\ast}\left( \left(\bigoplus_{\alpha^{\prime},\alpha^{\prime\prime}} A^{\ast}(G)\otimes (V_{\alpha^{\prime}}\otimes V_{\alpha^{\prime\prime}})\right)^{\Delta}\right)^{\Gamma/\Delta}\\
=H^{\ast}\left(\bigoplus_{\alpha^{\prime\prime}}\left(\bigoplus_{\alpha^{\prime}} A^{\ast}(G)\otimes V_{\alpha^{\prime}}\right)^{\Delta}\otimes V_{\alpha^{\prime\prime}}\right)^{\Gamma/\Delta}.
\end{multline*}
Since $\Delta$ is a lattice of $G$ and we assume that ${\rm Ad}(G)$ and ${\rm Ad}(\Delta)$ have the same Zariski-closure, by  Corollary \ref{ISOS} we have
\[H^{\ast}\left( \left(\bigoplus_{\alpha^{\prime}} A^{\ast}(G)\otimes V_{\alpha^{\prime}}\right)^{\Delta}\right)
\cong H^{\ast}\left(\left(\bigoplus_{\alpha^{\prime}} A^{\ast}(G)\otimes V_{\alpha^{\prime}}\right)^{G}\right).
\]
By Lemma \ref{incso} and \ref{TIN}, we have
\begin{multline*}H^{\ast}\left(\left(\bigoplus_{\alpha^{\prime}} A^{\ast}(G)\otimes V_{\alpha^{\prime}}\right)^{G}\right)
\cong 
H^{\ast}\left(\left(\left(\bigoplus_{\alpha^{\prime}} A^{\ast}(G)\otimes V_{\alpha^{\prime}}\right)^{G}\right)^{T^{\prime}}\right).
\end{multline*}
Hence we have 
\begin{multline*}H^{\ast}\left(\bigoplus_{\alpha^{\prime\prime}}\left(\bigoplus_{\alpha^{\prime}} A^{\ast}(G)\otimes V_{\alpha^{\prime}}\right)^{\Delta}\otimes V_{\alpha^{\prime\prime}}\right)^{\Gamma/\Delta}\\
\cong 
H^{\ast}\left(\bigoplus_{\alpha^{\prime\prime}}\left(\left(\bigoplus_{\alpha^{\prime}} A^{\ast}(G)\otimes V_{\alpha^{\prime}}\right)^{G}\right)^{T^{\prime}}\otimes V_{\alpha^{\prime\prime}}\right)^{\Gamma/\Delta}.
\end{multline*}
Since $H_{\Delta}=T^{\prime}\ltimes G$, we have
\begin{multline*}H^{\ast}\left(\bigoplus_{\alpha^{\prime\prime}}\left(\left(\bigoplus_{\alpha^{\prime}} A^{\ast}(G)\otimes V_{\alpha^{\prime}}\right)^{G}\right)^{T^{\prime}}\otimes V_{\alpha^{\prime\prime}}\right)^{\Gamma/\Delta}\\
=H^{\ast}\left(\bigoplus_{\alpha^{\prime\prime}}\left(\bigoplus_{\alpha^{\prime}} A^{\ast}(G)\otimes V_{\alpha^{\prime}}\right)^{H_{\Delta}}\otimes V_{\alpha^{\prime\prime}}\right)^{\Gamma/\Delta}
\\
\cong H^{\ast}\left(\left(\bigoplus_{\alpha^{\prime\prime}}\bigoplus_{\alpha^{\prime}} A^{\ast}(G)\otimes V_{\alpha^{\prime}}\otimes V_{\alpha^{\prime\prime}}\right)^{\Gamma H_{\Delta}}\right).
\end{multline*}
By Lemma \ref{GAMM}, we have 
\begin{multline*}
H^{\ast}\left(\left(\bigoplus_{\alpha^{\prime\prime}}\bigoplus_{\alpha^{\prime}} A^{\ast}(G)\otimes V_{\alpha^{\prime}}\otimes V_{\alpha^{\prime\prime}}\right)^{\Gamma H_{\Delta}}\right)\\
=
H^{\ast}\left(\left(\bigoplus_{\alpha^{\prime\prime}}\bigoplus_{\alpha^{\prime}} A^{\ast}(G)\otimes V_{\alpha^{\prime}}\otimes V_{\alpha^{\prime\prime}}\right)^{H_{\Gamma}}\right).
\end{multline*}
Since $H_{\Gamma}=T^{\prime}T^{\prime\prime}\ltimes G$, as above we have
\begin{multline*}H^{\ast}\left(\left(\bigoplus_{\alpha^{\prime\prime}}\bigoplus_{\alpha^{\prime}} A^{\ast}(G)\otimes V_{\alpha^{\prime}}\otimes V_{\alpha^{\prime\prime}}\right)^{H_{\Gamma}}\right)\\
=H^{\ast}\left(\left(\bigoplus_{\alpha^{\prime\prime}}\left(\left(\bigoplus_{\alpha^{\prime}} A^{\ast}(G)\otimes V_{\alpha^{\prime}}\right)^{G}\right)^{T^{\prime}}\otimes V_{\alpha^{\prime\prime}}\right)^{T^{\prime \prime}}\right).
\end{multline*}
Thus it is sufficient to show that the DGA 
\[\left(\bigoplus_{\alpha^{\prime\prime}}\left(\left(\bigoplus_{\alpha^{\prime}} A^{\ast}(G)\otimes V_{\alpha^{\prime}}\right)^{G}\right)^{T^{\prime}}\otimes V_{\alpha^{\prime\prime}}\right)^{T^{\prime \prime}}
\]
is isomorphic to $\bigwedge {\frak u}^{\ast}$.
By Lemma \ref{TIN} we have 
\[\left(\bigoplus_{\alpha^{\prime\prime}}\left(\left(\bigoplus_{\alpha^{\prime}} A^{\ast}(G)\otimes V_{\alpha^{\prime}}\right)^{G}\right)^{T^{\prime}}\otimes V_{\alpha^{\prime\prime}}\right)^{T^{\prime \prime}}\cong (\bigoplus_{\alpha^{\prime\prime}}\bigwedge {\frak u}^{\ast}\otimes V_{\alpha^{\prime\prime}})^{T^{\prime \prime}}.
\]
Now let $\bigwedge {\frak u}^{\ast}=\bigoplus_{\beta}A_{\beta^{\prime\prime}}$ be the weight decomposition of $T^{\prime\prime}$ for characters $\beta^{\prime\prime}$ of ${\bf T}^{\prime\prime}$.
Then we have 
\[ (\bigoplus_{\alpha^{\prime\prime}}\bigwedge {\frak u}^{\ast}\otimes V_{\alpha^{\prime\prime}})^{T^{\prime \prime}}
=(\bigoplus_{\beta^{\prime\prime}}\bigoplus_{\alpha^{\prime\prime}}A_{\beta^{\prime\prime}}\otimes V_{\alpha^{\prime\prime}} )^{T^{\prime\prime}}=
\bigoplus_{\alpha^{\prime\prime}}A_{(\alpha^{\prime\prime})^{-1}}\otimes V_{\alpha^{\prime\prime}}.
\]
It is easily seen  that 
\[\bigoplus_{\alpha^{\prime\prime}}A_{(\alpha^{\prime\prime})^{-1}}\otimes V_{\alpha^{\prime\prime}}\cong \bigwedge {\frak u}^{\ast}.
\]
Hence the theorem follows.
\end{proof}

Obviously a solvmanifold $G/\Gamma$ is a infra-solvmanifold with polycyclic fundamental group $\Gamma$.
Since $\bf T$ is the Zariski-closure of ${\rm Ad}_{s}(\Gamma)$ and diagonalizable, we have:
\begin{corollary}\label{GENN}
Let $G$ be a simply connected solvable Lie group with a lattice $\Gamma$ and ${\bf U}_{G}$ be the unipotent hull of $G$.
Let $\frak u$ be the Lie algebra of ${\bf U}_{G}$.
Then we have a quasi-isomorphism
\[\bigwedge {\frak u}^{\ast}\to A^{\ast}(G/\Gamma,{\mathcal O}_{{\rm Ad}_{s\vert_{\Gamma}}}).
\]
Thus $\bigwedge {\frak u}^{\ast}$ is the minimal model of $A^{\ast}(G/\Gamma,{\mathcal O}_{{\rm Ad}_{s\vert_{\Gamma}}})$.
\end{corollary}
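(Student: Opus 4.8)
The plan is to obtain this corollary as the special case of Theorem~\ref{INFRAI} in which the torsion-free polycyclic group is already a lattice in a simply connected solvable Lie group. First I would record that, since $G/\Gamma$ is a solvmanifold, $\Gamma$ is torsion-free polycyclic with ${\rm rank}\,\Gamma=\dim G$ (see \cite[Proposition 3.7]{R}), so that in the notation of Theorem~\ref{INFRAI} one may simply take $\Delta=\Gamma$ and take the given group $G$ as the simply connected solvable subgroup of ${\bf H}_{\Gamma}$; no passage to a proper finite-index subgroup is needed. The one hypothesis of Theorem~\ref{INFRAI} to be verified is that ${\bf H}_{\Gamma}/{\bf U}_{\Gamma}$ is diagonalizable. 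For this I would use that the algebraic hull ${\bf H}_{\Gamma}$ of $\Gamma$ coincides with the algebraic hull ${\bf H}_{G}$ of $G$, which follows from the equality ${\bf U}({\bf G})={\bf U}({\bf G}^{\prime})$ of unipotent radicals of the relevant Zariski-closures (see \cite[Theorem 3.2]{R}) together with $\dim G={\rm rank}\,\Gamma$. By Proposition~\ref{hulll} we have ${\bf H}_{G}={\bf T}\ltimes\bar{\bf N}$ with ${\bf U}_{G}=\bar{\bf N}$ and ${\bf T}$ the Zariski-closure of ${\rm Ad}_{s}(G)$, which is diagonalizable; hence ${\bf H}_{\Gamma}/{\bf U}_{\Gamma}\cong{\bf T}$ is diagonalizable.

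Next I would identify the coefficient system. By definition the representation $\rho$ of Theorem~\ref{INFRAI} is the composition $\Gamma\to{\bf H}_{\Gamma}\to{\bf H}_{\Gamma}/{\bf U}_{\Gamma}\cong{\bf T}$, and by Lemma~\ref{inde} this composition agrees, under the canonical isomorphism ${\bf H}_{G}/{\bf U}_{G}\cong{\bf T}$, with ${\rm Ad}_{s\vert_{\Gamma}}$. Consequently the DGA $A^{\ast}(G/\Gamma,{\mathcal O}_{\rho})$ is exactly $A^{\ast}(G/\Gamma,{\mathcal O}_{{\rm Ad}_{s\vert_{\Gamma}}})$, and Theorem~\ref{INFRAI} delivers the desired quasi-isomorphism $\bigwedge{\frak u}^{\ast}\to A^{\ast}(G/\Gamma,{\mathcal O}_{{\rm Ad}_{s\vert_{\Gamma}}})$. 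To upgrade this to the minimal-model statement I would observe that ${\frak u}$ is the Lie algebra of the unipotent group ${\bf U}_{G}$, hence nilpotent, so its Chevalley--Eilenberg complex $\bigwedge{\frak u}^{\ast}$ is a free graded-commutative algebra whose differential is purely quadratic and therefore decomposable; thus $\bigwedge{\frak u}^{\ast}$ is a minimal Sullivan algebra and, being quasi-isomorphic to $A^{\ast}(G/\Gamma,{\mathcal O}_{{\rm Ad}_{s\vert_{\Gamma}}})$, is its minimal model.

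The proof is essentially a specialization, so the only real subtlety lies in matching the hypotheses and data of Theorem~\ref{INFRAI} to the present situation. The main point to be careful about is the clean identification ${\bf H}_{\Gamma}={\bf H}_{G}$ and the resulting diagonalizability of ${\bf H}_{\Gamma}/{\bf U}_{\Gamma}$, together with the verification via Lemma~\ref{inde} that the abstractly defined $\rho$ really is ${\rm Ad}_{s\vert_{\Gamma}}$. In the general infra-solvmanifold statement one must account for the finite group ${\bf T}^{\prime\prime}$ and the descent through $\Gamma/\Delta$; the merit of the lattice case is precisely that these finite-index complications collapse, so that Theorem~\ref{INFRAI} applies directly with $\Delta=\Gamma$.
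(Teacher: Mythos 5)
Your overall strategy—specializing Theorem \ref{INFRAI}—is the same as the paper's, but the identification on which you base everything is false in general: ${\bf H}_{\Gamma}\neq{\bf H}_{G}$, and one cannot take $\Delta=\Gamma$. The fact ${\bf U}({\bf G})={\bf U}({\bf G}^{\prime})$ from \cite[Theorem 3.2]{R}, combined with $\dim G={\rm rank}\,\Gamma$ and the uniqueness of hulls, shows only that the Zariski closure of $\Gamma$ inside ${\bf H}_{G}$ \emph{is} ${\bf H}_{\Gamma}$, with ${\bf U}_{\Gamma}={\bf U}_{G}$; this closure can be a proper, even disconnected, subgroup of ${\bf H}_{G}$. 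The paper's own example in Section \ref{APP} makes this concrete: for $G=\R\ltimes_{\phi}\R^{2}$ with $\phi(t)$ the rotation by angle $\pi t$ and $\Gamma=\Z\ltimes\Z^{2}$, one has ${\rm Ad}_{s}(\Gamma)=\{{\rm diag}(1,1,1),{\rm diag}(1,-1,-1)\}$, so ${\bf H}_{\Gamma}/{\bf U}_{\Gamma}$ is a finite group of order $2$ while ${\bf H}_{G}/{\bf U}_{G}\cong{\bf T}\cong\C^{\ast}$; moreover the identity component of ${\bf H}_{\Gamma}(\R)$ is abelian, so the non-abelian group $G$ does not even embed in ${\bf H}_{\Gamma}$, and no simply connected solvable subgroup of ${\bf H}_{\Gamma}$ contains $\Gamma$ as a lattice—the passage to a proper finite-index subgroup $\Delta$ is unavoidable. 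This is not a peripheral defect: your argument is valid exactly when $\Gamma$ is Zariski-dense in ${\bf H}_{G}$, i.e.\ in the Mostow-type situation where the corollary already follows from Theorem \ref{qua} and Corollary \ref{ISOS}; the new content of Corollary \ref{GENN}, and the reason Theorem \ref{INFRAI} carries the machinery of $\Delta$, ${\bf T}^{\prime\prime}$ and the $\Gamma/\Delta$-descent, lies precisely in the cases your identification excludes.

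The correct specialization, which is what the paper does (very tersely), runs as follows. The solvmanifold $G/\Gamma$ is an infra-solvmanifold, so by Baues' rigidity (Theorem \ref{Bau}) it is diffeomorphic to the infra-solvmanifold built from ${\bf H}_{\Gamma}$ in Section \ref{Extens}; this step cannot be skipped, because the group constructed there need not be $G$. The hypothesis of Theorem \ref{INFRAI} holds because ${\bf H}_{\Gamma}/{\bf U}_{\Gamma}$ embeds as a closed subgroup of ${\bf H}_{G}/{\bf U}_{G}\cong{\bf T}$ (using ${\bf H}_{\Gamma}\subset{\bf H}_{G}$ and ${\bf U}_{\Gamma}={\bf U}_{G}$), and a closed subgroup of a diagonalizable group is diagonalizable—so your conclusion survives, but not via your claimed isomorphism ${\bf H}_{\Gamma}/{\bf U}_{\Gamma}\cong{\bf T}$, which fails in the example above. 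The Lie algebra ${\frak u}$ is the right one because ${\bf U}_{\Gamma}={\bf U}_{G}$, and $\rho$ agrees with ${\rm Ad}_{s\vert_{\Gamma}}$ (valued in the Zariski closure of ${\rm Ad}_{s}(\Gamma)\subset{\bf T}$) by restricting Lemma \ref{inde} along ${\bf H}_{\Gamma}\subset{\bf H}_{G}$. Finally, a small point: for a DGA generated in degree one, decomposability of the differential alone does not give minimality; what you need is nilpotency of ${\frak u}$, which you do have since ${\bf U}_{G}$ is unipotent.
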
 

Consider the injection $\phi: A^{\ast}(\g_{\C},{\rm ad}_{s})\to A^{\ast}(G/\Gamma,{\mathcal O}_{{\rm Ad}_{s\vert_{\Gamma}}})$.
By Theorem \ref{qua}, Proposition \ref{cohoin} and above Corollary, $\phi: A^{\ast}(\g_{\C},{\rm ad}_{s})\to A^{\ast}(G/\Gamma,{\mathcal O}_{{\rm Ad}_{s\vert_{\Gamma}}})$ is a quasi-isomorphism.
Hence we have:
\begin{corollary}\label{ISOINV}
Let $G$ be a simply connected solvable Lie group with a lattice $\Gamma$.
Then we have an isomorphism
\[H^{\ast}(A^{\ast}(G/\Gamma,{\mathcal O}_{{\rm Ad}_{s\vert_{\Gamma}}}))\cong H^{\ast}( A^{\ast}(\g_{\C},{\rm ad}_{s})).
\]
\end{corollary}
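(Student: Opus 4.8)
The plan is to prove the sharper statement that the natural inclusion $\phi: A^{\ast}(\g_{\C},{\rm ad}_{s})\to A^{\ast}(G/\Gamma,{\mathcal O}_{{\rm Ad}_{s\vert_{\Gamma}}})$ is itself a quasi-isomorphism; the asserted isomorphism of cohomology is then the map $\phi^{\ast}$ it induces. All of the substantive work is already packaged in three earlier results, and the proof is simply their combination: Theorem \ref{qua} exhibits $\bigwedge {\frak u}^{\ast}$ as the minimal model of the source, Corollary \ref{GENN} exhibits the \emph{same} $\bigwedge {\frak u}^{\ast}$ as the minimal model of the target, and Proposition \ref{cohoin} supplies injectivity of $\phi^{\ast}$ in every degree.

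First I would record a dimension count. Since ${\frak u}$ is a finite-dimensional Lie algebra, its Chevalley--Eilenberg cohomology $H^{\ast}(\bigwedge {\frak u}^{\ast})$ is finite-dimensional in each degree. By Theorem \ref{qua} this degreewise dimension equals $\dim_{\C} H^{k}(A^{\ast}(\g_{\C},{\rm ad}_{s}))$, and by Corollary \ref{GENN} it equals $\dim_{\C} H^{k}(A^{\ast}(G/\Gamma,{\mathcal O}_{{\rm Ad}_{s\vert_{\Gamma}}}))$ as well. Hence source and target of $\phi$ have equal and finite Betti numbers in every degree, without any need to check that the two quasi-isomorphisms into $\bigwedge {\frak u}^{\ast}$ are compatible with $\phi$. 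Next I would invoke Proposition \ref{cohoin}, which tells us that $\phi^{\ast}: H^{k}(A^{\ast}(\g_{\C},{\rm ad}_{s}))\to H^{k}(A^{\ast}(G/\Gamma,{\mathcal O}_{{\rm Ad}_{s\vert_{\Gamma}}}))$ is injective for every $k$. An injective linear map between finite-dimensional vector spaces of equal dimension is an isomorphism, so $\phi^{\ast}$ is an isomorphism in each degree, $\phi$ is a quasi-isomorphism, and the statement of the corollary follows at once.

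There is essentially no obstacle here, since the analytic input (the integration/averaging argument behind Proposition \ref{cohoin}) and the algebraic input (the identification of the minimal models in Theorem \ref{qua} and Corollary \ref{GENN}) have all been carried out beforehand. The only point demanding a moment's care is the finite-dimensionality that turns ``$\phi^{\ast}$ injective'' into ``$\phi^{\ast}$ surjective'', and this is guaranteed precisely because both cohomologies are computed by the finite-dimensional complex $\bigwedge {\frak u}^{\ast}$. I note that one could alternatively just compose the two quasi-isomorphisms of Theorem \ref{qua} and Corollary \ref{GENN} to obtain the abstract isomorphism $H^{\ast}(A^{\ast}(\g_{\C},{\rm ad}_{s}))\cong H^{\ast}(A^{\ast}(G/\Gamma,{\mathcal O}_{{\rm Ad}_{s\vert_{\Gamma}}}))$ directly; the advantage of routing through Proposition \ref{cohoin} is that it realizes this isomorphism concretely as the map induced by the geometric inclusion $\phi$ of invariant forms into all twisted forms.
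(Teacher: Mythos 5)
Your proposal is correct and is essentially the paper's own argument: the paper deduces that $\phi$ is a quasi-isomorphism by combining exactly the same three ingredients — Theorem \ref{qua}, Corollary \ref{GENN} (both cohomologies computed by the finite-dimensional complex $\bigwedge {\frak u}^{\ast}$) and the injectivity of $\phi^{\ast}$ from Proposition \ref{cohoin}. Your write-up merely makes explicit the dimension count and the remark that no compatibility between the two quasi-isomorphisms is needed, which the paper leaves implicit.
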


We can apply this corollary to computations of the  untwisted de Rham cohomology of solvmanifolds by invariant forms.
We have an extension of Mostow's theorem(=Theorem \ref{Moss} ) for the untwisted cohomology.
\begin{corollary}\label{untt}
Let $G$ be a simply connected solvable Lie group with a lattice $\Gamma$.
Let $\bf T$ be the Zariski-closure of ${\rm Ad}_{s}(G)$ in ${\rm Aut}(\g_{\C})$.
Denote $A_{\Gamma}$ a set of characters of $\bf T$ such that for $\alpha\in A_{\Gamma}$ the restriction of $\alpha\circ {\rm Ad}_{s}$ on $\Gamma$ is trivial.
Consider the sub-DGA $\bigoplus_{\alpha\in A_{\Gamma}} \bigwedge \g^{\ast}_{\C}\otimes V_{\alpha}$ of $A^{\ast}(\g_{\C},{\rm ad}_{s})$.
Then we have a quasi-isomorphisms
\[\left(\bigoplus_{\alpha\in A_{\Gamma}} \bigwedge \g^{\ast}_{\C}\otimes V_{\alpha}\right)^{\bf T}\to\bigoplus_{\alpha\in A_{\Gamma}} \bigwedge \g^{\ast}_{\C}\otimes V_{\alpha}\to A^{\ast}_{\C}(G/\Gamma).
\]
Moreover the DGA $\left(\bigoplus_{\alpha\in A_{\Gamma}} \bigwedge \g^{\ast}_{\C}\otimes V_{\alpha}\right)^{\bf T}$ is a sub-DGA of $\bigwedge {\frak u}^{\ast}$.
\end{corollary}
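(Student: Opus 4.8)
The plan is to read the statement off the quasi-isomorphism $\phi\colon A^{\ast}(\g_{\C},{\rm ad}_{s})\to A^{\ast}(G/\Gamma,{\mathcal O}_{{\rm Ad}_{s\vert_{\Gamma}}})$ established just before Corollary \ref{ISOINV}, by tracking how $\phi$ is graded by monodromy. The central observation is that $A_{\Gamma}$ is a \emph{subgroup} of the character group of $\bf T$: by definition $\alpha\in A_{\Gamma}$ means precisely that $\alpha\circ {\rm Ad}_{s\vert_{\Gamma}}$ is trivial, and such characters are closed under multiplication and inversion. Hence $\bigoplus_{\alpha\in A_{\Gamma}}\bigwedge\g^{\ast}_{\C}\otimes V_{\alpha}$ is closed under the wedge product (characters multiply) and under every $d_{\alpha}$, so it is a sub-DGA of $A^{\ast}(\g_{\C},{\rm ad}_{s})$ that is stable under the diagonalizable $\bf T$-action.

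First I would dispose of the left-hand map together with the final assertion. Since $A_{\Gamma}$ indexes a $\bf T$-stable summand, the argument of Lemma \ref{incso} applies to it verbatim: the $\bf T$-action on each $H^{\ast}(\bigwedge\g^{\ast}_{\C}\otimes V_{\alpha})$ is trivial and the $\bf T$-action on the complex is diagonalizable, so passing to $\bf T$-invariants preserves cohomology, which gives the first quasi-isomorphism. Because invariants commute with direct sums, $(\bigoplus_{\alpha\in A_{\Gamma}}\bigwedge\g^{\ast}_{\C}\otimes V_{\alpha})^{\bf T}=\bigoplus_{\alpha\in A_{\Gamma}}(\bigwedge\g^{\ast}_{\C}\otimes V_{\alpha})^{\bf T}$ is the $A_{\Gamma}$-graded piece of the full invariant DGA, which by Lemma \ref{uniso} is $\bigwedge{\frak u}^{\ast}$; as $A_{\Gamma}$ is a subgroup this piece is closed under products, hence a sub-DGA, which proves the ``moreover'' statement.

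Next I would treat the right-hand map. The map $\phi$ sends $\bigwedge\g^{\ast}_{\C}\otimes V_{\alpha}$ into the summand of the target whose monodromy is $\alpha\circ {\rm Ad}_{s\vert_{\Gamma}}$, and this monodromy depends only on the coset $\alpha A_{\Gamma}$, distinct cosets giving distinct monodromies. Thus source and target both split as direct sums of subcomplexes indexed by the cosets in $\widehat{\bf T}/A_{\Gamma}$, and $\phi$ is the direct sum of its restrictions $\phi_{c}$ to these cosets. Since cohomology commutes with direct sums, the fact that $\phi$ is a quasi-isomorphism forces each $\phi_{c}$ to be one as well. The trivial coset $c=A_{\Gamma}$ has trivial monodromy, so its target summand is the untwisted complex $A^{\ast}_{\C}(G/\Gamma)$ (a single copy, since the corresponding one-dimensional representation is trivial), and $\phi_{A_{\Gamma}}$ is exactly the asserted map $\bigoplus_{\alpha\in A_{\Gamma}}\bigwedge\g^{\ast}_{\C}\otimes V_{\alpha}\to A^{\ast}_{\C}(G/\Gamma)$; hence it is a quasi-isomorphism.

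The step I expect to be most delicate is the identification of the trivial-coset target with a single copy of $A^{\ast}_{\C}(G/\Gamma)$. When ${\rm Ad}_{s}(\Gamma)$ fails to be Zariski-dense in $\bf T$ the subgroup $A_{\Gamma}$ is infinite, so $\bigoplus_{\alpha\in A_{\Gamma}}\bigwedge\g^{\ast}_{\C}\otimes V_{\alpha}$ is an infinite direct sum mapping onto a single finite-dimensional complex; the compatibility rests on the fact that only finitely many $\alpha\in A_{\Gamma}$ carry nonzero cohomology, which is itself a consequence of the coset-refinement of Corollary \ref{ISOINV} rather than an independent input. To keep the target unambiguous I would index $A^{\ast}(G/\Gamma,{\mathcal O}_{{\rm Ad}_{s\vert_{\Gamma}}})$ by the characters of the Zariski-closure of ${\rm Ad}_{s}(\Gamma)$, i.e. by the distinct monodromies $\widehat{\bf T}/A_{\Gamma}$, so that the trivial monodromy contributes exactly one untwisted summand and the decomposition used in the previous paragraph is literally a splitting into subcomplexes.
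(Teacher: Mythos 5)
Your proposal is correct and follows essentially the same route as the paper: the paper also obtains the right-hand quasi-isomorphism by restricting the quasi-isomorphism $\phi$ of Corollary \ref{ISOINV} to the trivial-monodromy part (noting $\phi^{-1}(A^{\ast}_{\C}(G/\Gamma))=\bigoplus_{\alpha\in A_{\Gamma}}\bigwedge\g^{\ast}_{\C}\otimes V_{\alpha}$ and using that $\phi$ respects the direct-sum decomposition into monodromy classes), then gets the left-hand quasi-isomorphism from Lemma \ref{incso} and the sub-DGA claim from Lemma \ref{uniso}. Your coset-by-coset splitting over $\widehat{\bf T}/A_{\Gamma}$ and the explicit care about indexing the target are just a more systematic phrasing of the same argument.
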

\begin{proof}
Since we can consider $A^{\ast}_{\C}(G/\Gamma)=A^{\ast}(G/\Gamma, E_{\bf 1})$  for the trivial character $\bf 1$, $A^{\ast}_{\C}(G/\Gamma)$ is a sub-DGA of $A^{\ast}(G/\Gamma,{\mathcal O}_{{\rm Ad}_{s\vert_{\Gamma}}})$.
Then we have 
\[\phi^{-1}(A^{\ast}_{\C}(G/\Gamma))=\bigoplus_{\alpha\in A_{\Gamma}} \bigwedge \g^{\ast}_{\C}\otimes V_{\alpha}.\]
Since we define $A^{\ast}(G/\Gamma,{\mathcal O}_{{\rm Ad}_{s\vert_{\Gamma}}})=\bigoplus A^{\ast}(G/\Gamma, E_{\alpha\circ {\rm Ad}_{s\vert_{\Gamma} }})$ as a direct sum of cochain complexes and $\phi: A^{\ast}(\g_{\C},{\rm ad}_{s})\to A^{\ast}(G/\Gamma,{\mathcal O}_{{\rm Ad}_{s\vert_{\Gamma}}})$ is a quasi-isomorphism by Corollary \ref{ISOINV}, the restriction $\phi: \phi^{-1}(A^{\ast}_{\C}(G/\Gamma))\to A^{\ast}_{\C}(G/\Gamma)$ is also a quasi-isomorphism.
By Lemma \ref{incso}, the inclusion
\[\left(\bigoplus_{\alpha\in A_{\Gamma}} \bigwedge \g^{\ast}_{\C}\otimes V_{\alpha}\right)^{\bf T}\to\bigoplus_{\alpha\in A_{\Gamma}} \bigwedge \g^{\ast}_{\C}\otimes V_{\alpha}\]
is a quasi-isomorphism.
By Lemma \ref{uniso}, $\left(\bigoplus_{\alpha\in A_{\Gamma}} \bigwedge \g^{\ast}_{\C}\otimes V_{\alpha}\right)^{\bf T}$ is a sub-DGA of $\bigwedge {\frak u}^{\ast}$.
Hence the corollary follows.
\end{proof}

\section{Formality and hard Lefschetz properties}

In \cite{H}, Hasegawa proved the following theorem.
\begin{theorem}{\rm (\cite{H})}
Consider a DGA $\bigwedge {\frak n}^{\ast}$ which is the dual of a nilpotent Lie algebra $\frak n$.
Then $\bigwedge {\frak n}^{\ast}$ is formal if and only if $\frak n$ is abelian.

\end{theorem}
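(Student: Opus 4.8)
The plan is to prove the two implications separately; the forward direction is immediate and the converse carries all the content. If $\mathfrak{n}$ is abelian, then the Chevalley--Eilenberg differential, being dual to the bracket, vanishes identically on $\bigwedge\mathfrak{n}^{\ast}$. Hence $\bigwedge\mathfrak{n}^{\ast}$ coincides with its own cohomology and the identity is a quasi-isomorphism $\bigwedge\mathfrak{n}^{\ast}\to (H^{\ast}(\bigwedge\mathfrak{n}^{\ast}),0)$, so $\bigwedge\mathfrak{n}^{\ast}$ is formal.

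For the converse I would argue by contradiction, assuming $[\mathfrak{n},\mathfrak{n}]\neq 0$ and producing an obstruction to formality. The first observation is that $\bigwedge\mathfrak{n}^{\ast}$ is already a minimal Sullivan algebra: it is free on $V=\mathfrak{n}^{\ast}$ concentrated in degree $1$, and $dV\subset\bigwedge^{2}V$ because $dx(X,Y)=-x([X,Y])$ lands in the decomposables. Thus $\bigwedge\mathfrak{n}^{\ast}$ is its own minimal model, and I would invoke the DGMS criterion: a minimal Sullivan algebra $(\bigwedge V,d)$ is formal if and only if, writing $V=C\oplus N$ with $C=\ker(d|_{V})$ the closed $1$-forms and $N$ any complement, every closed element of the ideal $I(N)=N\cdot\bigwedge V$ is exact. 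Here $C=([\mathfrak{n},\mathfrak{n}])^{\perp}$, so $\dim N=\dim[\mathfrak{n},\mathfrak{n}]>0$ exactly when $\mathfrak{n}$ is non-abelian, and the goal becomes exhibiting a closed, non-exact element of $I(N)$.

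To build such an element I would use the lower central series $\mathfrak{n}\supset\mathfrak{n}^{(1)}=[\mathfrak{n},\mathfrak{n}]\supset\cdots\supset\mathfrak{n}^{(s)}\supset 0$ and pick $0\neq Z\in\mathfrak{n}^{(s)}$; this $Z$ is central and lies in $[\mathfrak{n},\mathfrak{n}]$, so a dual functional $z$ can be chosen inside $N$, and $dz\neq 0$. The model case is $\mathfrak{n}$ two-step, where $dz\in\bigwedge^{2}C$ with, say, $dz=-x\wedge y$ for closed generators $x,y$; then $z\wedge x$ lies in $I(N)$, is closed because $d(z\wedge x)=dz\wedge x=-x\wedge y\wedge x=0$, and is not exact since the exact $2$-forms are $d(N)\subset\bigwedge^{2}C$, which contain no factor from $N$. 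Equivalently this says the triple Massey product $\langle[x],[y],[y]\rangle=[-z\wedge y]$ is a nonzero class with vanishing indeterminacy, while a formal DGA has all Massey products zero. For the general step-$s$ case I would reduce to this situation through the two-step quotient $\mathfrak{n}/\mathfrak{n}^{(2)}$, which is two-step, non-abelian, and has the same $H^{1}$, transporting the non-exact closed element back; the non-exactness in the full algebra I would control using Poincar\'e duality for $H^{\ast}(\bigwedge\mathfrak{n}^{\ast})$, available because nilpotent Lie algebras are unimodular.

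The main obstacle is precisely this last step. In the two-step case the closed non-exact element (equivalently the nonzero Massey product) is completely explicit, but for higher nilpotency class one must guarantee that the element produced from the top of the lower central series remains non-exact in all of $\bigwedge\mathfrak{n}^{\ast}$ and that the Massey-product indeterminacy genuinely vanishes. Managing the passage from the two-step quotient back to $\mathfrak{n}$---or, alternatively, verifying the DGMS ideal condition directly by an induction on the nilpotency class $s$---is where the real work lies, and the unimodularity/Poincar\'e-duality input is what I expect to make the non-exactness verification go through.
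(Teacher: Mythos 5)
First, a remark on the comparison itself: the paper contains no proof of this statement --- it is quoted verbatim from Hasegawa \cite{H} --- so your attempt can only be measured against Hasegawa's argument, which combines the criterion of \cite{DGMS} with unimodularity of nilpotent Lie algebras. Your forward implication is complete, and your ``model'' two-step case is correct as far as it goes, but the converse has two genuine gaps, the second of which is fatal to the strategy as stated. (i) Even for two-step algebras, your closed element $z\wedge x$ exists only when $dz$ is a single monomial: for the five-dimensional two-step algebra with $dz=x_{1}\wedge x_{2}+x_{3}\wedge x_{4}$ and $dx_{i}=0$, one has $d(z\wedge x_{1})=x_{3}\wedge x_{4}\wedge x_{1}\neq 0$, so no generator $x_{i}$ makes $z\wedge x_{i}$ closed. (ii) More seriously, the reduction of the general case to the two-step quotient $\mathfrak{n}/[\mathfrak{n},[\mathfrak{n},\mathfrak{n}]]$ fails: the quotient's Chevalley--Eilenberg complex is a sub-DGA of $\bigwedge\mathfrak{n}^{\ast}$, and a closed form that is non-exact in the subalgebra can perfectly well become exact in the full algebra. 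Concretely, for the filiform algebra with $dx_{1}=dx_{2}=0$, $dx_{3}=-x_{1}\wedge x_{2}$, $dx_{4}=-x_{1}\wedge x_{3}$, the two-step quotient is the Heisenberg algebra and your obstruction there is $x_{3}\wedge x_{1}$; but in $\bigwedge\mathfrak{n}^{\ast}$ one has $x_{3}\wedge x_{1}=dx_{4}$, so the transported element dies. Invoking Poincar\'e duality ``to control non-exactness'' is at this point a hope, not an argument, and this is exactly the step you flagged as the real work.

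The missing idea is to aim at the top degree instead of degree two. A nilpotent Lie algebra is unimodular ($\mathrm{ad}_{X}$ is nilpotent, hence traceless), so $H^{n}(\bigwedge\mathfrak{n}^{\ast})\neq 0$ where $n=\dim\mathfrak{n}$; since $\bigwedge^{n}\mathfrak{n}^{\ast}$ is one-dimensional, this means no nonzero top-degree form is exact. Now let $V=C\oplus N$ be \emph{any} decomposition with $C=\ker(d|_{V})$; if $\mathfrak{n}$ is non-abelian then $N\neq 0$, and the volume form $x_{1}\wedge\dots\wedge x_{n}$ (a) is closed for trivial degree reasons, (b) lies in the ideal $I(N)$ because any $0\neq y\in N$ can be taken as one of its factors, and (c) is not exact, by unimodularity. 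This produces a closed non-exact element of $I(N)$ for \emph{every} choice of complement $N$ simultaneously --- which is what the criterion of \cite{DGMS} actually requires, since that criterion asserts formality is equivalent to the \emph{existence} of a good complement --- and it does so in one stroke for arbitrary nilpotency class, with no induction, no Massey products, and no passage to quotients. This is in essence the cited proof. Your instinct that unimodularity/Poincar\'e duality is the decisive input was correct; it just has to be applied to the fundamental class itself rather than used to repair a degree-two element transported from a quotient.
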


By Hasegawa's theorem, Theorem \ref{qua}, Proposition \ref{abab} and  Corollary \ref{GENN}, we have the following theorem.
\begin{theorem}\label{SOLFORM}
Let $G$ be a simply connected solvable Lie group.
Then the following conditions are equivalent:\\
$(A)$ The DGA $A^{\ast}(\g_{\C},{\rm ad}_{s})$ is formal \\
$(B)$  ${\bf U}_{G}$ is abelian.\\
$(C)$ $G=\R^{n}\ltimes_{\phi} \R^{m}$ such that the action $\phi:\R^{n}\to {\rm  Aut} (\R^{m})$ is semi-simple.\\
Moreover suppose $G$ has a lattice $\Gamma$.
Then the above three conditions are equivalent to the following condition:\\
$(D)$ $A^{\ast}(G/\Gamma,{\mathcal O}_{{\rm Ad}_{s\vert_{\Gamma}}})$ is formal.
\end{theorem}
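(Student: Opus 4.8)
The plan is to push everything through the common minimal model $\bigwedge {\frak u}^{\ast}$ and then invoke Hasegawa's theorem. The equivalence $(B)\Leftrightarrow(C)$ requires no new work: it is exactly Proposition \ref{abab}. So the substance lies in tying formality of the two DGAs to the abelianness of ${\bf U}_{G}$.

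For $(A)\Leftrightarrow(B)$ I would argue as follows. By Theorem \ref{qua} the map $\bigwedge {\frak u}^{\ast}\to A^{\ast}(\g_{\C},{\rm ad}_{s})$ is a quasi-isomorphism, so $\bigwedge {\frak u}^{\ast}$ is the minimal model of $A^{\ast}(\g_{\C},{\rm ad}_{s})$. Because formality is an invariant of the quasi-isomorphism type of a DGA---equivalently, a DGA is formal precisely when its minimal model is---it follows that $A^{\ast}(\g_{\C},{\rm ad}_{s})$ is formal if and only if $\bigwedge {\frak u}^{\ast}$ is formal. Now $\frak u$ is the Lie algebra of the unipotent radical ${\bf U}_{G}$ of the algebraic hull and is therefore nilpotent, so Hasegawa's theorem applies and shows $\bigwedge {\frak u}^{\ast}$ is formal if and only if $\frak u$ is abelian, that is, if and only if ${\bf U}_{G}$ is abelian. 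This establishes $(A)\Leftrightarrow(B)$.

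The equivalence with $(D)$ in the presence of a lattice $\Gamma$ is entirely parallel. By Corollary \ref{GENN} the map $\bigwedge {\frak u}^{\ast}\to A^{\ast}(G/\Gamma,{\mathcal O}_{{\rm Ad}_{s\vert_{\Gamma}}})$ is again a quasi-isomorphism, exhibiting $\bigwedge {\frak u}^{\ast}$ as the minimal model of $A^{\ast}(G/\Gamma,{\mathcal O}_{{\rm Ad}_{s\vert_{\Gamma}}})$. The same invariance of formality then gives that $A^{\ast}(G/\Gamma,{\mathcal O}_{{\rm Ad}_{s\vert_{\Gamma}}})$ is formal if and only if $\bigwedge {\frak u}^{\ast}$ is, and the Hasegawa step above closes the loop $(D)\Leftrightarrow(B)$.

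The only delicate point is the claim that formality transfers across the quasi-isomorphisms of Theorem \ref{qua} and Corollary \ref{GENN}; since these are genuine quasi-isomorphisms of DGAs and formality is detected on the minimal model, this is the standard invariance of formality under quasi-isomorphism, and I expect it to pose no real obstacle beyond recording it cleanly. One should simply note that $\frak u$ is nilpotent, being the Lie algebra of a unipotent group, so that Hasegawa's theorem---stated for $\bigwedge {\frak n}^{\ast}$ with $\frak n$ nilpotent---applies verbatim to the $\C$-DGA $\bigwedge {\frak u}^{\ast}$.
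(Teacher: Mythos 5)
Your proposal is correct and follows exactly the paper's own route: the paper derives Theorem \ref{SOLFORM} directly from Hasegawa's theorem, Theorem \ref{qua}, Proposition \ref{abab} and Corollary \ref{GENN}, which is precisely the chain of reductions you carry out. You have merely made explicit the two points the paper leaves implicit (invariance of formality under quasi-isomorphism, in the paper's zigzag sense, and nilpotency of $\frak u$ so that Hasegawa's theorem applies), and both are handled correctly.
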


In \cite{BG}, Benson and Gordon proved:
\begin{theorem}{\rm (\cite{BG}, see also \cite[Section 4.6.4]{Fe})}
Consider a DGA $\bigwedge {\frak n}^{\ast}$ which is the cochain complex of the dual of a nilpotent Lie algebra $\frak n$.
Suppose we have $[\omega]\in H^{2}(\bigwedge {\frak n}^{\ast})$ such that $[\omega]^{n}\not=0$ where $2n=\dim \frak n$.
Then for any $0\leq i\leq n$ the linear operator
\[[\omega]^{n-i}\wedge: H^{i}(\bigwedge {\frak n}^{\ast})\to H^{2n-i}(\bigwedge {\frak n}^{\ast})
\]
is an isomorphism if and only if $\frak n$ is abelian. 
\end{theorem}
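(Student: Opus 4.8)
The plan is to prove the two implications separately, treating the easy direction first and then isolating the structural obstruction that forces abelianness. For the direction $\n$ abelian $\Rightarrow$ hard Lefschetz, if $\n$ is abelian the Chevalley--Eilenberg differential on $\bigwedge\n^{\ast}$ vanishes, so $H^{\ast}(\bigwedge\n^{\ast})=\bigwedge\n^{\ast}$, and the hypothesis $[\omega]^{n}\neq 0$ just says that $\omega$ is a nondegenerate (linear) symplectic form on the $2n$-dimensional vector space $\n$. Then the statement is the classical hard Lefschetz theorem of linear symplectic algebra: the operators $L=\omega\wedge(\cdot)$, its contraction adjoint $\Lambda$, and the degree-counting operator form an $\mathfrak{sl}_{2}$-triple acting on $\bigwedge\n^{\ast}$, and $\mathfrak{sl}_{2}$-representation theory gives that $L^{n-i}\colon\bigwedge^{i}\n^{\ast}\to\bigwedge^{2n-i}\n^{\ast}$ is an isomorphism for every $i\le n$. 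I would simply reproduce or cite this.

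For the converse, assume hard Lefschetz and suppose, for contradiction, that $[\n,\n]\neq 0$. First I record two standard facts. Since a nilpotent Lie algebra is unimodular, the top cohomology is $H^{2n}\cong\R$ and Poincar\'e duality makes the pairing $H^{1}\times H^{2n-1}\to H^{2n}\cong\R$ nondegenerate. Combining this with the $i=1$ case of hard Lefschetz, namely that $L^{n-1}\colon H^{1}\to H^{2n-1}$ is an isomorphism, the bilinear form
\[Q(\alpha,\beta)=[\alpha\wedge\beta\wedge\omega^{n-1}]\in H^{2n}\cong\R\]
is nondegenerate on $H^{1}$. Second, because $d$ vanishes on $\bigwedge^{0}\n^{\ast}$ there are no nonzero exact $1$-forms, so $H^{1}$ is exactly the space of closed $1$-forms, i.e.\ $H^{1}\cong(\n/[\n,\n])^{\ast}$, the annihilator of $[\n,\n]$.

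The heart of the argument is to produce a nonzero class in the radical of $Q$. Using the lower central series, its last nonzero term $C^{k}\n$ (with $k\ge 2$ since $\n$ is non-abelian) is central and contained in $[\n,\n]$; choose $0\neq Z\in C^{k}\n$. Contracting $\omega$ with $Z$ gives a $1$-form $\iota_{Z}\omega$, nonzero because $\omega$ is nondegenerate. Evaluating $d\omega=0$ on $(X,Y,Z)$ and using the centrality of $Z$ yields $\omega(Z,[X,Y])=0$ for all $X,Y$, which says precisely that $d(\iota_{Z}\omega)=0$; hence $\alpha_{Z}:=\iota_{Z}\omega$ is a nonzero class in $H^{1}$. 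Now I would compute $Q(\alpha_{Z},\beta)$ for any $\beta\in H^{1}$: writing $\omega^{n}=c\,\nu$ for a volume form $\nu$ with $c\neq 0$, and using the derivation identity $(\iota_{Z}\omega)\wedge\omega^{n-1}=\tfrac1n\,\iota_{Z}(\omega^{n})$ together with $\beta\wedge\iota_{Z}\nu=\beta(Z)\,\nu$, one finds $Q(\alpha_{Z},\beta)=\pm\tfrac{c}{n}\,\beta(Z)$. But every $\beta\in H^{1}$ annihilates $[\n,\n]$ and $Z\in[\n,\n]$, so $\beta(Z)=0$ for all $\beta$. Thus $\alpha_{Z}$ is a nonzero element in the radical of $Q$, contradicting nondegeneracy. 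Therefore $[\n,\n]=0$ and $\n$ is abelian.

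The main obstacle is the converse, and within it the decisive point is the simultaneous use of two properties of $Z$: centrality, which via $d\omega=0$ is exactly what makes $\iota_{Z}\omega$ a closed form, and membership in $[\n,\n]$, which is exactly what annihilates its $Q$-pairing against all of $H^{1}$. Locating an element enjoying both properties through the terminal stage of the lower central series, and verifying the contraction identities and the nondegeneracy of $Q$ inherited from the $i=1$ Lefschetz isomorphism, are the steps that require care; the remainder is formal.
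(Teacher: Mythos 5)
Your proof is correct, but note that the paper itself contains no proof of this statement to compare against: it is quoted verbatim from Benson--Gordon \cite{BG} (see also \cite[Section 4.6.4]{Fe}) and used as a black box to derive Theorem \ref{SOLHARD}. What you have written is essentially a faithful reconstruction of the classical Benson--Gordon argument. The easy direction (abelian case) via the $\mathfrak{sl}_2$-triple of linear symplectic algebra is standard and fine, since $[\omega]^n\neq 0$ forces any closed representative $\omega$ to be nondegenerate. For the converse, your two key observations are exactly the right ones: an element $Z\neq 0$ of the last nonzero term of the lower central series is simultaneously central (which, together with $d\omega=0$, gives $d(\iota_Z\omega)=0$) and lies in $[\n,\n]$ (which kills $\beta(Z)$ for every closed $1$-form $\beta$, since $H^1$ is the annihilator of $[\n,\n]$ and degree-one classes have unique representatives); the contraction identity $\iota_Z(\omega^n)=n\,(\iota_Z\omega)\wedge\omega^{n-1}$ and $\beta\wedge\iota_Z\nu=\beta(Z)\nu$ then place the nonzero class $[\iota_Z\omega]$ in the radical of your pairing $Q$, contradicting the nondegeneracy obtained from the $i=1$ Lefschetz isomorphism combined with Poincar\'e duality. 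The only external input you should flag explicitly is that Poincar\'e duality for Lie algebra cohomology requires unimodularity (automatic here since $\mathrm{ad}_X$ is nilpotent, hence traceless); this is a genuine citation, due essentially to Dixmier, not a formality. Two small bonuses of your write-up worth noting: you only use the $i=1$ case of the hard Lefschetz hypothesis, so you prove a slightly stronger statement than the one quoted, and your argument works entirely at the level of the Chevalley--Eilenberg complex, with no need to invoke a lattice or Nomizu's theorem.
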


By this theorem, we have:
\begin{theorem}\label{SOLHARD}
Let $G$ be a simply connected solvable Lie group.
Suppose $\dim G=2n$ and $G$ has an $G$-invariant symplectic form $\omega$.
Then the following conditions equivalent:\\
$(A)$\[[\omega]^{n-i}\wedge: H^{i}(A^{\ast}(\g_{\C},{\rm ad}_{s}))\to H^{2n-i}(A^{\ast}(\g_{\C},{\rm ad}_{s}))
\]
is an isomorphic for any $i\le n$.\\
$(B)$  ${\bf U}_{G}$ is abelian.\\
$(C)$  $G=\R^{n}\ltimes_{\phi} \R^{m}$ such that the action $\phi:\R^{n}\to {\rm  Aut} (\R^{m})$ is semi-simple.

Suppose $G$ has a lattice $\Gamma$ and $G/\Gamma$ has a symplectic form(not necessarily $G$-invariant)  $\omega$.
Then the  conditions $(B)$ and $(C)$ are equivalent to the following condition:\\
$(D)$\[[\omega]^{n-i}\wedge: H^{i}(A^{\ast}(G/\Gamma,{\mathcal O}_{{\rm Ad}_{s\vert_{\Gamma}}}))\to H^{2n-i}(A^{\ast}(G/\Gamma,{\mathcal O}_{{\rm Ad}_{s\vert_{\Gamma}}}))
\] 
is an isomorphism  for any $i\le n$ \\
\end{theorem}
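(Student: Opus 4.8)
The plan is to deduce all of the equivalences from the Benson--Gordon theorem quoted above, applied to the nilpotent Lie algebra ${\frak u}$, by transporting the Lefschetz operators along the quasi-isomorphisms already established. The equivalence $(B)\Leftrightarrow(C)$ is nothing but Proposition \ref{abab} and needs no further argument. For the rest I would use two structural facts about ${\frak u}$: it is the Lie algebra of the unipotent group ${\bf U}_{G}$, hence nilpotent, and $\dim{\frak u}=\dim G=2n$. Thus $\bigwedge{\frak u}^{\ast}$ is exactly the kind of DGA to which Benson--Gordon applies, and the vanishing of its bracket is the same as condition $(B)$.

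For $(A)\Leftrightarrow(B)$, recall from Theorem \ref{qua} that $\bigwedge{\frak u}^{\ast}\to A^{\ast}(\g_{\C},{\rm ad}_{s})$ is a quasi-isomorphism of DGAs, so it induces an isomorphism of cohomology algebras respecting the cup product. Hence, letting $\tilde\omega$ be a cocycle in $\bigwedge{\frak u}^{\ast}$ whose class corresponds to $[\omega]$, the operator $[\omega]^{n-i}\wedge$ on $H^{\ast}(A^{\ast}(\g_{\C},{\rm ad}_{s}))$ is conjugate to $[\tilde\omega]^{n-i}\wedge$ on $H^{\ast}(\bigwedge{\frak u}^{\ast})$. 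Therefore $(A)$ holds if and only if the hard Lefschetz property holds for $[\tilde\omega]$ on $\bigwedge{\frak u}^{\ast}$, which by Benson--Gordon is equivalent to ${\frak u}$ being abelian, i.e. to $(B)$.

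For $(D)\Leftrightarrow(B)$ under the lattice hypothesis I would repeat this verbatim, using Corollary \ref{GENN} (equivalently Corollary \ref{ISOINV}) in place of Theorem \ref{qua}: the quasi-isomorphism $\bigwedge{\frak u}^{\ast}\to A^{\ast}(G/\Gamma,{\mathcal O}_{{\rm Ad}_{s\vert_{\Gamma}}})$ carries the class of the symplectic form on $G/\Gamma$ to a class $[\tilde\omega]\in H^{2}(\bigwedge{\frak u}^{\ast})$ and conjugates the two Lefschetz operators, so $(D)$ is again equivalent to hard Lefschetz for $[\tilde\omega]$ on $\bigwedge{\frak u}^{\ast}$, hence to $(B)$.

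The main obstacle is the verification of the hypothesis $[\tilde\omega]^{n}\neq0$ under which Benson--Gordon is stated: without it the theorem does not apply, and in the converse implications $(B)\Rightarrow(A)$ and $(B)\Rightarrow(D)$ it is precisely the nondegeneracy of the transported class that is required. In the forward directions this is free, since the $i=0$ instance of $(A)$ (resp. $(D)$) asserts that $[\omega]^{n}\wedge\colon H^{0}\to H^{2n}$ is an isomorphism, forcing $[\tilde\omega]^{n}\neq0$, after which the abelian conclusion follows. For the converse in the compact case $(D)$ this is again immediate, because $\omega^{n}$ is a volume form on $G/\Gamma$, so $[\omega]^{n}\neq0$ in ordinary de Rham cohomology; I would then check that this class stays nonzero after transport to the one-dimensional space $H^{2n}(\bigwedge{\frak u}^{\ast})$ by comparing with the trivial-character (untwisted) summand, whose top cohomology injects into the full complex as in Corollary \ref{untt}. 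Once nondegeneracy of $[\tilde\omega]$ is secured, both implications of Benson--Gordon are available and the chains $(A)\Leftrightarrow(B)\Leftrightarrow(C)$ and $(B)\Leftrightarrow(C)\Leftrightarrow(D)$ close.
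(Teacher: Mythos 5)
Your proposal follows exactly the paper's route: the paper's entire proof of this theorem is the single line ``By this theorem, we have:'' after the quoted Benson--Gordon statement, i.e.\ it combines Benson--Gordon applied to the nilpotent Lie algebra ${\frak u}$ (nilpotent since ${\bf U}_{G}$ is unipotent, and of dimension $\dim G=2n$) with the quasi-isomorphisms of Theorem \ref{qua} and Corollary \ref{GENN}, and with Proposition \ref{abab} for $(B)\Leftrightarrow(C)$. Your transport-of-structure argument is the intended one, and your treatment of the lattice case is actually more careful than the paper: the observation that the forward directions need only the $i=0$ instance, and that $(B)\Rightarrow(D)$ gets the required nondegeneracy because $[\omega^{n}]$ is a nonzero de Rham class on the compact manifold $G/\Gamma$ and $A^{\ast}_{\C}(G/\Gamma)=A^{\ast}(G/\Gamma,E_{\bf 1})$ sits inside $A^{\ast}(G/\Gamma,{\mathcal O}_{{\rm Ad}_{s\vert_{\Gamma}}})$ as a direct summand of cochain complexes (so its cohomology injects), is precisely what is needed and is nowhere spelled out in the paper.

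There is, however, a genuine gap: you call the hypothesis $[\tilde\omega]^{n}\neq0$ ``the main obstacle'' for both converses $(B)\Rightarrow(A)$ and $(B)\Rightarrow(D)$, but you then secure it only for $(D)$; your closing sentence (``Once nondegeneracy of $[\tilde\omega]$ is secured\ldots'') leaves $(B)\Rightarrow(A)$ unproven. Moreover this step cannot be carried out in the stated generality, because without a lattice $G$ need not be unimodular and a $G$-invariant symplectic form can then be exact in $A^{\ast}(\g_{\C},{\rm ad}_{s})$. Concretely, take $G=\R\ltimes_{\phi}\R$ with $\phi(t)=e^{t}$, so $\g=\langle X,Y\rangle$ with $[X,Y]=Y$ and dual basis $x^{\ast},y^{\ast}$: then $\omega=x^{\ast}\wedge y^{\ast}=-dy^{\ast}$ is an invariant symplectic form, conditions $(B)$ and $(C)$ hold (the action is semi-simple, so ${\frak u}$ is abelian), yet $[\omega]=0$ in $H^{2}(A^{\ast}(\g_{\C},{\rm ad}_{s}))\cong\C$, so $[\omega]\wedge:H^{0}\to H^{2}$ is the zero map and $(A)$ fails at $i=0$. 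The missing nondegeneracy does hold when $\g$ is unimodular (then $H^{2n}(\bigwedge\g_{\C}^{\ast})\cong\C$, no invariant top-degree form is exact, and this untwisted summand injects into the twisted cohomology), and unimodularity is automatic in the lattice case, which is why your $(D)$ argument succeeds. To be fair, this defect is inherited from the paper itself: its one-line proof passes over the same point, and the first half of the statement is only correct under an extra hypothesis such as unimodularity of $G$ or $[\omega]^{n}\neq0$ in $H^{2n}(A^{\ast}(\g_{\C},{\rm ad}_{s}))$. The part of your proof concerning $(B)$, $(C)$, $(D)$ is complete.
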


For infra-solvmanifolds, by Theorem \ref{INFRAI} and  Proposition \ref{abab}   we have:
\begin{theorem}\label{INFORM}
Let $M$ be a infra-solvmanifold with the torsion-free polycyclic fundamental group $\Gamma$ 
and $\Gamma\to {\bf H}_{\Gamma}$ be the algebraic hull of $\Gamma$.
Suppose $ {\bf H}_{\Gamma}/ {\bf U}_{\Gamma}$ is diagonalizable.
Let $\rho$ be the composition 
\[\Gamma\to {\bf H}_{\Gamma}\to  {\bf H}_{\Gamma}/ {\bf U}_{\Gamma}.
\]
Then following conditions are equivalent:\\
$(A)$  $A^{\ast}(M, {\mathcal O}_{\rho})$ is formal.\\
$(B)$  ${\bf U}_{\Gamma}$ is abelian.\\
$(C)$  $M$ is finitely covered by a solvmanifold $G/\Gamma$ such that
$G=\R^{n}\ltimes_{\phi} \R^{m}$ with a semi-simple action $\phi:\R^{n}\to {\rm  Aut} (\R^{m})$  and $\Gamma$ is a lattice of $G$.\\
If $\dim M=2n $ and $M$ has a symplectic form $\omega$, the conditions $(A)$, $(B)$ and $(C)$ are equivalent to the following condition:\\
$(D)$ \[[\omega]^{n-i}\wedge: H^{i}(A^{\ast}(M,{\mathcal O}_{\rho}))\to H^{2n-i}(A^{\ast}(M,{\mathcal O}_{\rho}))
\] 
is an isomorphism  for any $i\le n$.
\end{theorem}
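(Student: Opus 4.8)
The plan is to transport all four conditions to the nilpotent Lie algebra $\mathfrak{u}$ of ${\bf U}_{\Gamma}$ via the minimal model $\bigwedge\mathfrak{u}^{\ast}$ supplied by Theorem \ref{INFRAI}, and then to invoke the theorems of Hasegawa and Benson--Gordon recalled above together with Proposition \ref{abab}. Since ${\bf U}_{\Gamma}$ is unipotent, $\mathfrak{u}$ is nilpotent and $\bigwedge\mathfrak{u}^{\ast}$ is precisely the Chevalley--Eilenberg complex to which those two theorems apply.

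For $(A)\Leftrightarrow(B)$ I would use that Theorem \ref{INFRAI} provides a quasi-isomorphism $\bigwedge\mathfrak{u}^{\ast}\to A^{\ast}(M,{\mathcal O}_{\rho})$. As formality depends only on the quasi-isomorphism type of a DGA, $A^{\ast}(M,{\mathcal O}_{\rho})$ is formal if and only if $\bigwedge\mathfrak{u}^{\ast}$ is. By Hasegawa's theorem the latter holds exactly when $\mathfrak{u}$ is abelian, i.e. when ${\bf U}_{\Gamma}$ is abelian.

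For $(B)\Leftrightarrow(C)$ I would descend to the finite-index normal subgroup $\Delta$ of $\Gamma$ and the simply connected solvable group $G\subset{\bf H}_{\Gamma}$ constructed in Section \ref{Extens}, for which $\Delta$ is a lattice and ${\bf H}_{\Delta}={\bf H}_{G}$. As recorded there, the decompositions ${\bf H}_{\Gamma}={\bf T}\ltimes{\bf U}_{\Gamma}$ and ${\bf H}_{\Delta}={\bf T}^{\prime}\ltimes{\bf U}_{\Gamma}$ share the same unipotent radical, so ${\bf U}_{G}={\bf U}_{\Gamma}$. Applying Proposition \ref{abab} to $G$ shows that ${\bf U}_{\Gamma}$ is abelian if and only if $G=\R^{n}\ltimes_{\phi}\R^{m}$ with $\phi$ semi-simple; in that case the solvmanifold $G/\Delta$ finitely covers $M$ (as $\Delta$ has finite index in $\Gamma=\pi_{1}(M)$), which is condition $(C)$. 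Conversely, if $M$ is finitely covered by such a solvmanifold, then its fundamental group is a finite-index subgroup of $\Gamma$, so by the commensurability invariance of the unipotent hull (cf. Section 2) it again has unipotent radical ${\bf U}_{\Gamma}$, abelian by Proposition \ref{abab}. Here Theorem \ref{Bau} guarantees that the diffeomorphism type of the cover is pinned down by its fundamental group.

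For the symplectic case I would transfer the hard Lefschetz property through the same quasi-isomorphism. Being a map of DGAs, it induces a ring isomorphism $H^{\ast}(\bigwedge\mathfrak{u}^{\ast})\cong H^{\ast}(A^{\ast}(M,{\mathcal O}_{\rho}))$, carrying the untwisted class $[\omega]$ to a class $[\tilde\omega]\in H^{2}(\bigwedge\mathfrak{u}^{\ast})$. Since $\dim\mathfrak{u}={\rm rank}\,\Gamma=\dim M=2n$ and $\omega^{n}$ is a volume form, $[\omega]^{n}\neq0$ already in the untwisted summand $H^{2n}_{\rm dR}(M)\subset H^{2n}(A^{\ast}(M,{\mathcal O}_{\rho}))$, whence $[\tilde\omega]^{n}\neq0$. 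Benson--Gordon's theorem then applies to $(\bigwedge\mathfrak{u}^{\ast},[\tilde\omega])$ and shows that the operators $[\tilde\omega]^{n-i}\wedge$ are isomorphisms for all $i\le n$ if and only if $\mathfrak{u}$ is abelian. As the cohomology ring isomorphism intertwines $[\omega]^{n-i}\wedge$ with $[\tilde\omega]^{n-i}\wedge$, condition $(D)$ is equivalent to $\mathfrak{u}$ being abelian, i.e. to $(B)$. The step I expect to require the most care is the passage to $\Delta$ and $G$ in $(B)\Leftrightarrow(C)$: one must verify that the unipotent hull is genuinely unchanged under finite-index passage and that $G/\Delta$ is literally a finite cover of the given $M$, both of which rest on the algebraic-hull construction of Section \ref{Extens} and on Theorem \ref{Bau}.
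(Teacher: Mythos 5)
Your proposal is correct and takes essentially the same route as the paper: the paper deduces Theorem \ref{INFORM} directly from Theorem \ref{INFRAI} and Proposition \ref{abab}, combined with the quoted theorems of Hasegawa and Benson--Gordon, exactly as you do. Your elaborations (quasi-isomorphism invariance of formality, the identification ${\bf U}_{G}={\bf U}_{\Gamma}$ via the finite-index subgroup $\Delta$ from Section \ref{Extens}, the use of Theorem \ref{Bau} to realize the finite cover, and the nonvanishing of $[\omega]^{n}$ needed for Benson--Gordon) simply make explicit what the paper leaves implicit.
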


\section{Examples and  remarks} 
Let $G$ be a simply connected solvable Lie group with a lattice $\Gamma$.
Suppose ${\bf U}_{G}$ is abelian.
In \cite{K} the author showed that $G/\Gamma$ is formal and if $G/\Gamma$ has a symplectic form, then $G/\Gamma$ is hard Lefschetz.
But the converses of these results are not true.
See the following examples.

\begin{example}(\cite{Saw}) \\
We consider a $8$-dimensional solvable Lie group $G=G_{1}\times \R$ such that : $G_{1}$ is the matrix group as
\begin{multline*}
\left \{\left( \begin{array}{ccccccc}
e^{a_{1}t}&0&0&0&0&e^{-a_{3}t}x_{2}&z_{1}\\
0&e^{a_{2}t}&0&e^{-a_{1}t}x_{3}&0&0&z_{2}\\
0&0&e^{a_{3}t}&0&e^{-a_{2}t}x_{1}&0&z_{3}\\
0&0&0&e^{-a_{1}t}&0&0&x_{1}\\
0&0&0&0&e^{-a_{2}t}&0&x_{2}\\
0&0&0&0&0&e^{-a_{3}t}&x_{3}\\
0&0&0&0&0&0&1
\end{array}
\right) :t, x_{i}, y_{i}\in\R\right\},
\end{multline*}
where $a_{1}, a_{2}, a_{3}$ are distinct real numbers such that $a_{1}+a_{2}+a_{3}=0$.

Let $\g$ be the Lie algebra of $G$ and $\g^{\ast}$ the dual of $\g$.
The cochain complex $(\bigwedge \g^{\ast}, d)$ is generated by a basis $\{\alpha ,\beta, \zeta _{i}, \eta _{i}\}$ of $\g^{\ast}$ such that:
\[d\alpha =0,\,\, d\beta =0,\]
\[d\zeta _{i}= a_{i}\alpha \wedge \zeta _{i},\]
\[d\eta _{1}=-a_{1}\alpha \wedge \eta _{1}- \zeta _{2}\wedge\zeta _{3},\]
\[d\eta _{2}=-a_{2}\alpha \wedge \eta _{2}- \zeta _{3}\wedge\zeta _{1},\]
\[d\eta _{3}=-a_{3}\alpha \wedge \eta _{3}- \zeta _{1}\wedge\zeta _{2}.\]
In \cite{Saw} Sawai showed that for some $a_{1}, a_{2}, a_{3}$, $G$ has a lattice $\Gamma$ and $G/\Gamma$ satisfies formality and has a $G$-invariant symplectic form 
\[\omega=\alpha\wedge \beta +p(\zeta _{1}\wedge \eta _{1}-\zeta _{2}\wedge \eta _{2})+q(-\zeta _{2}\wedge \eta _{2}+\zeta _{3}\wedge \eta _{3})\]
 satisfying the hard Lefschetz property where $pq\not=0$ and $p+q\not=0$.
We have
\[{\rm Ad}_{s}(G)=
\left \{\left( \begin{array}{cccccccc}
e^{a_{1}t}&0&0&0&0&0&0&0\\
0&e^{a_{2}t}&0&0&0&0&0&0\\
0&0&e^{a_{3}t}&0&0&0&0&0\\
0&0&0&e^{-a_{1}t}&0&0&0&0\\
0&0&0&0&e^{-a_{2}t}&0&0&0\\
0&0&0&0&0&e^{-a_{3}t}&0&0\\
0&0&0&0&0&0&1&0\\
0&0&0&0&0&0&0&1
\end{array}
\right) :t\in\R\right\}.
\]
Let $\bf T$ is the Zariski closure of ${\rm Ad}_{s}(G)$.
Then for some characters $\alpha_{1}, \alpha_{2},\alpha_{3}$ of  $\bf T$ ,
the cochain complexes $(\bigwedge \g^{\ast} \otimes V_{\alpha_{i}}, d_{\alpha_{i}})$  are given by:
\[d_{\alpha_{i}}(v_{\alpha_{i}})=-a_{i}\alpha \otimes v_{\alpha_{i}}
\] 
for $v_{\alpha_{i}}\in V_{\alpha_{i}}$.

We have
\[d_{\alpha_{2}}(\zeta_{2}\otimes v_{\alpha_{2}})=a_{2}\alpha \wedge\zeta _{2}\otimes v_{\alpha_{2}}+\zeta _{2}\wedge a_{2}\alpha \otimes v_{\alpha_{2}}=0,\]
\[d_{\alpha_{3}}(\zeta_{3} \otimes v_{\alpha_{3}})=a_{3}\alpha \wedge\zeta _{3}\otimes v_{\alpha_{3}}+\zeta _{3}\wedge a_{3}\alpha\otimes v_{\alpha_{3}}=0,\]
\begin{multline*}
d_{\alpha_{2}\alpha_{3}}(\eta_{1}\otimes v_{\alpha_{2}\alpha_{3}})\\=
-(a_{1}+a_{2}+a_{3})\alpha \wedge\eta_{1}\otimes v_{\alpha_{2}\alpha_{3}}-\zeta _{2}\wedge\zeta _{3}\otimes v_{\alpha_{2}\alpha_{3}}
\\ =-\zeta _{2}\wedge\zeta _{3}\otimes v_{\alpha_{2}\alpha_{3}}.
\end{multline*}
Hence in $H^{2}(\bigwedge \g_{\C}^{\ast} \otimes V_{\alpha_{2}\alpha_{3}})$,
\[[\zeta_{2} \otimes v_{\alpha_{2}}]\cdot [\zeta_{3}\otimes v_{\alpha_{3}}]=0\]
and we have the Massey triple product
\[\langle[\zeta_{2}\otimes v_{\alpha_{2}} ], [\zeta_{3}\otimes v_{\alpha_{3}}],  [\zeta _{3}\otimes v_{\alpha_{3}}]\rangle=[\eta _{1}\wedge \zeta _{3}\otimes v_{\alpha_{2}\alpha^{2}_{3}}] \]
in the quotient of 
\[
H^{2}(\bigwedge \g_{\C}^{\ast} \otimes V_{\alpha_{2}\alpha^{2}_{3}})\]
by \[
([\zeta_{2}\otimes  v_{\alpha_{2}}]\cdot H^{1}(\bigwedge \g_{\C}^{\ast} \otimes V_{\alpha^{2}_{3}})+[\zeta _{3}\otimes v_{\alpha_{3}}]\cdot H^{1}(\bigwedge \g_{\C}^{\ast} \otimes V_{\alpha_{2}\alpha_{3}})).\]
This Massey product is not zero.
Hence the  DGA $\bigoplus_{\alpha} \bigwedge \g^{\ast}\otimes V_{\alpha}$ has a non-zero Massey product and it is not formal.
\begin{remark}
In \cite{Nar}, Narkawicz gave examples of complements $X$ of  hyperplane arrangements which are formal but for some diagonal representations of $\pi_{1}(X,x)$ the DGA $A^{\ast}(X,{\mathcal O}_{\rho})$ is non-formal.
\end{remark}
We have $d_{\alpha_{1}}(\zeta _{1}\otimes v_{\alpha_{1}})=0$ and the cohomology class $[ \zeta _{1}\otimes v_{\alpha_{1}}]\in
 H^{1}(\bigwedge \g_{\C}^{\ast} \otimes V_{\alpha_{1}} )$ is not zero.
We have 
\begin{multline*}
\omega^{3}=-6p(q+p) \alpha \wedge \beta\wedge \zeta _{1}\wedge \eta _{1}\wedge \zeta _{2}\wedge \eta _{2}\\
-6(p+q)q\alpha \wedge \beta\wedge \zeta _{2}\wedge \eta _{2}\wedge \zeta _{3}\wedge \eta _{3}\\
+6pq \alpha \wedge \beta\wedge \zeta _{1}\wedge \eta _{1}\wedge \zeta _{3}\wedge \eta _{3}\\
-6pq(p+q) \zeta _{1}\wedge \eta _{1}\wedge \zeta _{2}\wedge \eta _{2}
\wedge \zeta _{3}\wedge \eta _{3},
\end{multline*}
and 
\[\omega^{3}\wedge \zeta _{1}\otimes v_{\alpha_{1}}=-6(p+q)q\alpha \wedge \beta\wedge \zeta _{1}\wedge \zeta _{2}\wedge \eta _{2}\wedge \zeta _{3}\wedge \eta _{3}\otimes v_{\alpha_{1}}.
\]
Otherwise we have
\[ d_{\alpha_{1}}(\alpha \wedge \beta\wedge \zeta _{1}\wedge \eta _{1}\wedge \eta _{2}\wedge \eta _{3}\otimes v_{\alpha_{1}})
=-\alpha \wedge \beta\wedge \zeta _{1}\wedge \zeta _{2}\wedge \eta _{2}\wedge \zeta _{3}\wedge \eta _{3}\otimes v_{\alpha_{1}}.
\]
Hence $[\omega]^{3}\wedge([\zeta _{1}\otimes v_{\alpha_{1}}])=0$ and the operator $[\omega]^{3}\wedge$  is not injective.
\begin{theorem}
For $G/\Gamma$, the DGA $A^{\ast}(G/\Gamma,{\mathcal O}_{{\rm Ad}_{s\vert_{\Gamma}}}))$ is not formal and the linear operator
\[[\omega]^{3}\wedge:H^{1}(A^{\ast}(G/\Gamma,{\mathcal O}_{{\rm Ad}_{s\vert_{\Gamma}}}))\to H^{7}(A^{\ast}(G/\Gamma,{\mathcal O}_{{\rm Ad}_{s\vert_{\Gamma}}}))
\]
is not an isomorphism.
Thus ${\bf U}_{G}$ is not abelian.
In particular $G/\Gamma$ is not K\"ahler.
\end{theorem}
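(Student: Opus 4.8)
The plan is to transfer the two computations just performed on the invariant model $A^{\ast}(\g_{\C},{\rm ad}_{s})$ to the manifold DGA $A^{\ast}(G/\Gamma,{\mathcal O}_{{\rm Ad}_{s\vert_{\Gamma}}})$ by means of the quasi-isomorphism furnished by Corollary \ref{ISOINV}. Recall from Theorem \ref{qua}, Proposition \ref{cohoin} and Corollary \ref{GENN} that the DGA morphism
\[\phi\colon A^{\ast}(\g_{\C},{\rm ad}_{s})\to A^{\ast}(G/\Gamma,{\mathcal O}_{{\rm Ad}_{s\vert_{\Gamma}}})\]
is a quasi-isomorphism. Because formality and the vanishing of Massey products are invariants of the quasi-isomorphism type of a DGA, and because the Lefschetz operator is compatible with the induced cohomology isomorphism $\phi^{\ast}$, both the failure of formality and the failure of hard Lefschetz will descend from the source to the target.

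For non-formality, the example computation already exhibits a non-vanishing Massey triple product
\[\langle[\zeta_{2}\otimes v_{\alpha_{2}}],[\zeta_{3}\otimes v_{\alpha_{3}}],[\zeta_{3}\otimes v_{\alpha_{3}}]\rangle=[\eta_{1}\wedge\zeta_{3}\otimes v_{\alpha_{2}\alpha_{3}^{2}}]\]
in $H^{\ast}(A^{\ast}(\g_{\C},{\rm ad}_{s}))$. Since $\phi$ is an honest morphism of DGAs inducing an isomorphism on cohomology, it carries this Massey product, together with its indeterminacy, isomorphically onto a Massey product in $H^{\ast}(A^{\ast}(G/\Gamma,{\mathcal O}_{{\rm Ad}_{s\vert_{\Gamma}}}))$; the image is therefore again non-zero. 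As every Massey product of a formal DGA vanishes, I conclude that $A^{\ast}(G/\Gamma,{\mathcal O}_{{\rm Ad}_{s\vert_{\Gamma}}})$ is not formal. For hard Lefschetz, I would use that $\omega$ is $G$-invariant, so $[\omega]$ is the image under $\phi^{\ast}$ of the invariant class appearing in the example; since $\phi^{\ast}$ is a ring isomorphism, multiplication by $[\omega]^{3}$ intertwines the Lefschetz maps of the two complexes. The example shows $[\omega]^{3}\wedge[\zeta_{1}\otimes v_{\alpha_{1}}]=0$ with $[\zeta_{1}\otimes v_{\alpha_{1}}]\neq0$ in $H^{1}$, so the map on the source is not injective; hence neither is the map on the target, and the operator $[\omega]^{3}\wedge\colon H^{1}\to H^{7}$ fails to be an isomorphism.

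The two structural conclusions then follow formally. Non-formality of $A^{\ast}(\g_{\C},{\rm ad}_{s})$ negates condition $(A)$ of Theorem \ref{SOLFORM}, so by the equivalence $(A)\Leftrightarrow(B)$ there the unipotent hull ${\bf U}_{G}$ cannot be abelian; the failure of hard Lefschetz together with Theorem \ref{SOLHARD} gives the same conclusion. Finally, ${\rm Ad}_{s\vert_{\Gamma}}$ is a representation of $\pi_{1}(G/\Gamma)=\Gamma$ into the diagonalizable group ${\bf T}\subset(\C^{\ast})^{n}$, so Theorem \ref{fol1} applies with $\rho={\rm Ad}_{s\vert_{\Gamma}}$: were $G/\Gamma$ K\"ahler, the DGA $A^{\ast}(G/\Gamma,{\mathcal O}_{{\rm Ad}_{s\vert_{\Gamma}}})$ would be both formal and hard Lefschetz, contradicting the above, so $G/\Gamma$ admits no K\"ahler metric.

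The step I expect to require the most care is the transfer of the Massey product: one must check that the indeterminacy subspace of the source product, generated by cup products with $H^{1}$, is matched bijectively under $\phi^{\ast}$ with that of the target, so that a class non-zero modulo indeterminacy upstairs remains non-zero modulo indeterminacy downstairs. This matching is automatic precisely because $\phi$ is a genuine DGA quasi-isomorphism rather than a mere additive cohomology isomorphism, but it is the point at which the full strength of Corollary \ref{ISOINV} is used, and it is worth spelling out.
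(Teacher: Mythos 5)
Your proposal is correct and matches the paper's (implicit) argument: the paper derives this theorem directly from the Massey product and Lefschetz computations on $A^{\ast}(\g_{\C},{\rm ad}_{s})$, transferred to $A^{\ast}(G/\Gamma,{\mathcal O}_{{\rm Ad}_{s\vert_{\Gamma}}})$ via the quasi-isomorphism $\phi$ of Corollary \ref{ISOINV}, and then invokes Theorems \ref{SOLFORM}/\ref{SOLHARD} for non-abelianness of ${\bf U}_{G}$ and Theorem \ref{fol} for the non-K\"ahler conclusion. Your added care about the indeterminacy of the Massey product being matched under $\phi^{\ast}$ is a point the paper leaves tacit, but it is the standard quasi-isomorphism invariance argument and raises no gap.
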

\end{example}

As above examples, comparing with untwisted versions, formality and the hard Lefschetz properties of the DGA $A^{\ast}(G/\Gamma,{\mathcal O}_{{\rm Ad}_{s\vert_{\Gamma}}})$ are useful criteria for formal and hard Lefschetz solvmanifolds to be not  K\"ahler.
But we have a non-K\"ahler symplectic solvmanifold such that $A^{\ast}(G/\Gamma,{\mathcal O}_{{\rm Ad}_{s\vert_{\Gamma}}})$ is formal and hard Lefschetz.
In \cite{Ara} Arapura showed that for a  simply connected solvable Lie group $G$ with a lattice $\Gamma$ if a solvmanifold $G/\Gamma$ admits a K\"ahler structure then $\Gamma$ is virtually abelian.
In \cite{Aus} it was proved that a lattice of a simply connected solvable Lie group $G$ is virtually nilpotent if and only if $G$ is type (I) i.e. for any $g\in G$ the all eigenvalues of ${\rm Ad}_{g}$ have absolute value 1.
Thus by Theorem \ref{SOLFORM} and \ref{SOLHARD},  we have:
\begin{corollary}
Let $G=\R^{n}\ltimes_{\phi} \R^{m}$ such that the action $\phi:\R^{n}\to {\rm Aut}(\R^{m})$ is semi-simple.
Suppose $G$ is not type (I) and has a lattice $\Gamma$.
Then $A^{\ast}(G/\Gamma,{\mathcal O}_{{\rm Ad}_{s\vert_{\Gamma}}})$ is formal but $G/\Gamma$  has no K\"ahler structure.
If $G/\Gamma$ has a symplectic form $\omega$, then the operator
\[[\omega]^{n-i}\wedge: H^{i}(A^{\ast}(G/\Gamma,{\mathcal O}_{{\rm Ad}_{s\vert_{\Gamma}}}))\to H^{2n-i}(A^{\ast}(G/\Gamma,{\mathcal O}_{{\rm Ad}_{s\vert_{\Gamma}}}))
\] 
is an isomorphism  for any $i\le n$ where $\dim G=2n$.
\end{corollary}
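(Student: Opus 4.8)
The plan is to read off formality and the hard Lefschetz property directly from Theorems \ref{SOLFORM} and \ref{SOLHARD}, and to derive the failure of the K\"ahler condition by combining the results of Arapura and Auslander quoted just above. First I would observe that the standing hypothesis---that $G=\R^{n}\ltimes_{\phi}\R^{m}$ with $\phi$ semi-simple---is literally condition $(C)$ of both Theorem \ref{SOLFORM} and Theorem \ref{SOLHARD}. By Proposition \ref{abab} this is equivalent to condition $(B)$, namely that ${\bf U}_{G}$ is abelian, so both of these equivalent conditions are in force throughout.

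Granting this, formality is immediate. Since $G$ has a lattice $\Gamma$ and condition $(C)$ holds, the equivalence $(C)\Leftrightarrow(D)$ in Theorem \ref{SOLFORM} yields condition $(D)$, i.e. that $A^{\ast}(G/\Gamma,{\mathcal O}_{{\rm Ad}_{s\vert_{\Gamma}}})$ is formal. Likewise, if $G/\Gamma$ carries a symplectic form $\omega$ (not necessarily $G$-invariant), then condition $(C)$ together with the equivalence $(C)\Leftrightarrow(D)$ of Theorem \ref{SOLHARD} gives condition $(D)$ of that theorem, which is precisely the assertion that $[\omega]^{n-i}\wedge$ is an isomorphism on $H^{\ast}(A^{\ast}(G/\Gamma,{\mathcal O}_{{\rm Ad}_{s\vert_{\Gamma}}}))$ for every $i\le n$, where $\dim G=2n$.

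For the non-K\"ahler claim I would argue by contradiction. Since $G$ is not of type (I), the characterization proved in \cite{Aus} shows that the lattice $\Gamma$ is not virtually nilpotent; as every abelian group is in particular nilpotent, it follows that $\Gamma$ cannot be virtually abelian either. On the other hand, Arapura's theorem in \cite{Ara} asserts that if the solvmanifold $G/\Gamma$ admitted a K\"ahler structure then its fundamental group $\Gamma$ would be virtually abelian. These two statements are incompatible, so $G/\Gamma$ has no K\"ahler metric.

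All three assertions are essentially formal consequences of the machinery already assembled, so I expect no serious computational obstacle. The only point demanding care is the logical passage in the last paragraph: one must be certain that ``$\Gamma$ not virtually nilpotent'' already rules out ``$\Gamma$ virtually abelian'' (using the containment of abelian groups among nilpotent ones) before applying Arapura's obstruction, since it is the \emph{virtual abelianness} and not virtual nilpotence that his result contradicts.
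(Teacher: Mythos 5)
Your proposal is correct and follows exactly the paper's intended argument: the paper states this corollary as an immediate consequence of Theorems \ref{SOLFORM} and \ref{SOLHARD} (condition $(C)\Rightarrow(D)$ in each) together with the quoted results of Arapura and Auslander, which is precisely the chain you assembled. Your extra care about the implication ``not virtually nilpotent $\Rightarrow$ not virtually abelian'' is the correct (and only) logical glue needed, just as the paper implicitly uses it.
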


We give complex examples.

\begin{example}(\cite{Na})\\
Let $G=\C\ltimes_{\phi} \C^{2}$ with $\phi(x)= \left(
\begin{array}{ccc}
e^{x}&0\\
0&e^{-x}
\end{array}
\right)$.
Then $G$ has an invariant symplectic form.
 In \cite{Na}, it was shown that $G$ has a lattice $\Gamma$.
Thus $G/\Gamma$ is a non-K\"ahler complex solvmanifold but $A^{\ast}(G/\Gamma,{\mathcal O}_{{\rm Ad}_{s\vert_{\Gamma}}})$ is formal and hard Lefschetz.
\end{example}

\section{On isomorphism $H^{\ast}(G/\Gamma,\C)\cong H^{\ast}(\g_{\C})$} \label{APP}
Let $G$ be a simply connected solvable Lie group with a lattice $\Gamma$ and $\g$ be the Lie algebra of $G$.
We give new criteria for the isomorphism $H^{\ast}(G/\Gamma,\C)\cong H^{\ast}(\g_{\C})$ to hold by using Corollary \ref{GENN}.
Take a basis $X_{1},\dots,X_{n}$ of $\g_{\C}$ such that ${\rm Ad}_{s}$ is represented by diagonal matrices as ${\rm Ad}_{sg}=diag(\alpha_{1}(g),\dots,\alpha_{n}(g))$.
For $\{i_{1},\dots ,i_{p}\}\subset \{1,\dots ,n\}$ write $\alpha_{i_{1}\dots i_{p}}$ as the product of characters $\alpha_{i_{1}},\dots ,\alpha_{i_{p}}$.
\begin{corollary}\label{MOSEX}
Let $G$ be a simply connected solvable Lie group with a lattice $\Gamma$ and $\g$ be the Lie algebra of $G$. 
Suppose $(G,\Gamma)$ satisfies the following condition  :\\
 $(C_{G, \Gamma})$: For any $\{i_{1},\dots ,i_{p}\}\subset \{1,\dots ,n\}$ if the character $\alpha_{i_{1}\dots i_{p}}$ is non-trivial then the restriction of   $\alpha_{i_{1}\dots i_{p}\vert \Gamma}$ on $\Gamma$ is also non-trivial.\\
Then an isomorphism $H^{\ast}(G/\Gamma,\C)\cong H^{\ast}(\g_{\C})$ holds.
\end{corollary}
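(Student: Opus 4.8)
The plan is to run the untwisted-coefficient machinery of Corollary \ref{untt} and then use the hypothesis $(C_{G,\Gamma})$ to collapse the relevant sub-DGA down to the ${\bf T}$-invariant part of the ordinary Chevalley--Eilenberg complex $\bigwedge\g^{\ast}_{\C}$. Recall that by Corollary \ref{untt} the composite
\[\left(\bigoplus_{\alpha\in A_{\Gamma}}\bigwedge\g^{\ast}_{\C}\otimes V_{\alpha}\right)^{\bf T}\to A^{\ast}_{\C}(G/\Gamma)\]
is a quasi-isomorphism, where $A_{\Gamma}$ is the set of characters $\alpha$ of ${\bf T}$ with $\alpha\circ{\rm Ad}_{s}$ trivial on $\Gamma$. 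Hence $H^{\ast}(G/\Gamma,\C)$ is computed by the ${\bf T}$-invariant sub-DGA on the left.

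First I would make that invariant part explicit using the weight computation from the proof of Lemma \ref{uniso}: with $\{x_{1},\dots,x_{n}\}$ the basis of $\g^{\ast}_{\C}$ dual to $\{X_{1},\dots,X_{n}\}$, one has $t\cdot(x_{i_{1}}\wedge\dots\wedge x_{i_{p}})=\alpha^{-1}_{i_{1}\dots i_{p}}(t)\,x_{i_{1}}\wedge\dots\wedge x_{i_{p}}$ for $t\in{\bf T}$, so that a monomial $x_{i_{1}}\wedge\dots\wedge x_{i_{p}}\otimes v_{\alpha}$ is ${\bf T}$-invariant precisely when $\alpha=\alpha_{i_{1}\dots i_{p}}$. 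Since ${\bf T}$ is diagonalizable, the invariants are the span of these weight-zero monomials, so the left-hand DGA is spanned exactly by the $x_{i_{1}}\wedge\dots\wedge x_{i_{p}}\otimes v_{\alpha_{i_{1}\dots i_{p}}}$ subject to the single constraint $\alpha_{i_{1}\dots i_{p}}\in A_{\Gamma}$.

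The key step is to feed in the hypothesis through its contrapositive. By definition $\alpha_{i_{1}\dots i_{p}}\in A_{\Gamma}$ is literally the statement that $\alpha_{i_{1}\dots i_{p}\vert\Gamma}$ is trivial, and $(C_{G,\Gamma})$ asserts that a nontrivial $\alpha_{i_{1}\dots i_{p}}$ can never have trivial restriction to $\Gamma$; hence every surviving monomial has $\alpha_{i_{1}\dots i_{p}}$ equal to the trivial character ${\bf 1}$. For such a monomial $v_{\alpha_{i_{1}\dots i_{p}}}=v_{\bf 1}$ is a scalar and $x_{i_{1}}\wedge\dots\wedge x_{i_{p}}$ is already ${\bf T}$-invariant inside $\bigwedge\g^{\ast}_{\C}$, giving an isomorphism of DGAs
\[\left(\bigoplus_{\alpha\in A_{\Gamma}}\bigwedge\g^{\ast}_{\C}\otimes V_{\alpha}\right)^{\bf T}\cong\left(\bigwedge\g^{\ast}_{\C}\right)^{\bf T}.\]
Finally, applying Lemma \ref{incso} to the trivial character ${\bf 1}$ yields $H^{\ast}((\bigwedge\g^{\ast}_{\C})^{\bf T})\cong H^{\ast}(\bigwedge\g^{\ast}_{\C})=H^{\ast}(\g_{\C})$, and stringing the isomorphisms together gives $H^{\ast}(G/\Gamma,\C)\cong H^{\ast}(\g_{\C})$.

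The only real subtlety — the step I would be most careful about — is the bookkeeping of the three notions of triviality: $\alpha_{i_{1}\dots i_{p}}$ trivial as a character of ${\bf T}$, membership $\alpha_{i_{1}\dots i_{p}}\in A_{\Gamma}$, and $\alpha_{i_{1}\dots i_{p}\vert\Gamma}$ trivial. Once one verifies that the last two are the same condition, the contrapositive of $(C_{G,\Gamma})$ does all the work and no genuine computation remains; no Massey-product or explicit cohomology analysis is needed, only the weight decomposition and the quasi-isomorphisms already established.
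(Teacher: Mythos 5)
Your proposal is correct and follows essentially the same route as the paper's own proof: both identify the ${\bf T}$-invariant sub-DGA $\left(\bigoplus_{\alpha\in A_{\Gamma}}\bigwedge\g^{\ast}_{\C}\otimes V_{\alpha}\right)^{\bf T}$ via the weight computation of Lemma \ref{uniso}, use the contrapositive of $(C_{G,\Gamma})$ to reduce it to $\left(\bigwedge\g^{\ast}_{\C}\right)^{\bf T}$, and conclude with the quasi-isomorphism of Corollary \ref{untt} together with the triviality of the ${\bf T}$-action on cohomology (Lemma \ref{incso}). Your explicit bookkeeping of the three notions of triviality and the explicit citation of Lemma \ref{incso} at the final step merely spell out what the paper leaves implicit.
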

\begin{proof}
Let $x_{1},\dots, x_{n}$ be a basis of $ \g_{ \C}^{\ast}$ which is dual to $X_{1},\dots ,X_{n}$.
Consider the DGA $\left(\bigoplus_{\alpha\in A_{\Gamma}} \bigwedge \g^{\ast}_{\C}\otimes V_{\alpha}\right)^{\bf T}$ as Corollary \ref{GENN}.
By ${\rm Ad}_{sg}^{\ast}\cdot x_{i}=\alpha_{i}(g)^{-1}x_{i}$ we have 
\begin{multline*}
\left(\bigoplus_{\alpha\in A_{\Gamma}} \bigwedge \g^{\ast}_{\C}\otimes V_{\alpha}\right)^{\bf T}\\
=\left\langle x_{i_{1}}\wedge \dots \wedge x_{i_{p}}\otimes v_{ \alpha_{i_{1}\dots i_{p}}}{\Big \vert} \begin{array}{cc}1\le i_{1}<i_{2}<\dots <i_{p}\le n,\\  {\rm the \, \,  restriction \, \,  of \, \, \alpha_{i_{1}\dots i_{p}}\, \,  on  \, \, \Gamma \, \, is\, \,  trivial}\end{array}\right\rangle
\end{multline*}
as  the proof of Lemma \ref{uniso}.
Suppose $(G,\Gamma)$ satisfies the condition  $(C_{G, \Gamma})$.
Then we have
\[\left(\bigoplus_{\alpha\in A_{\Gamma}} \bigwedge \g^{\ast}_{\C}\otimes V_{\alpha}\right)^{\bf T}= \left(\bigwedge \g_{ \C}^{\ast}\right)^{\bf T}.
\]
Hence by Corollary \ref{GENN}, we have an isomorphism
\[H^{\ast}\left(  \left(\bigwedge \g_{ \C}^{\ast}\right)^{\bf T}\right)\cong H^{\ast}(G/\Gamma,\C).
\]
This implies that the inclusion $\bigwedge (\g_{\C})^{\ast} \subset A_{\C}^{\ast}(G/\Gamma)$ induces an isomorphism
\[H^{\ast}(\g_{ \C})\cong H^{\ast}(G/\Gamma,\C).\]
\end{proof}
\begin{remark}
We have examples such that  we can apply of this corollary but can not use Mostow's theorem(=Theorem \ref{Moss} ).
\end{remark}

\begin{example}
Let $G=\R\ltimes_{\phi}\R^{2}$ with $\phi(t)=\left(\begin{array}{cc} \cos \pi t & -\sin \pi t \\ \sin \pi t &\cos \pi t\end{array}\right)$.
Then $G$ has a lattice $\Gamma=\Z\ltimes \Z^{2}$.
In this case $G$ is not completely solvable and $(G,\Gamma)$ does not satisfies the Mostow's condition.
But diagonalization of ${\rm Ad}_{s}$ is given by ${\rm Ad}_{s}(t,x,y)=diag(1,e^{\pi  t\sqrt{-1}}, e^{-\pi t \sqrt{-1}})$ and hence  $(G,\Gamma)$ satisfies the condition  $(C_{, G, \Gamma})$.
Thus we have an isomorphism $H^{\ast}(\g_{\C})\cong H^{\ast}(G/\Gamma,\C)$.
\end{example}

For a character $\alpha$ of $G$, if the restriction of $\alpha$ on $\Gamma$ is trivial, then the image $\alpha(G) =\alpha(G/\Gamma)$ is compact and hence $\alpha$ is a unitary character.
Hence the above corollary reduce to the following corollary.
\begin{corollary}
Let $G$ be a simply connected solvable Lie group with a lattice $\Gamma$ and $\g$ be the Lie algebra of $G$. 
Suppose $G$ satisfies the following condition  :\\
 $(D_{ G})$:  For each $\{i_{1},\dots ,i_{p}\}\subset \{1,\dots ,n\}$  the character $\alpha_{i_{1}\dots i_{p}}$ is not a non-trivial unitary character.\\
Then an isomorphism $H^{\ast}(G/\Gamma,\C)\cong H^{\ast}(\g_{\C})$ holds.
\end{corollary}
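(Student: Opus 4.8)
The plan is to derive this corollary directly from the preceding Corollary~\ref{MOSEX} by verifying that the hypothesis $(D_{G})$ implies the hypothesis $(C_{G,\Gamma})$ of that corollary. Once $(C_{G,\Gamma})$ is established, the isomorphism $H^{\ast}(G/\Gamma,\C)\cong H^{\ast}(\g_{\C})$ is exactly the conclusion of Corollary~\ref{MOSEX}, so no further cohomological computation is required; the whole task reduces to translating a condition phrased in terms of the pair $(G,\Gamma)$ into one phrased in terms of $G$ alone.

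First I would record the elementary fact about characters already noted just before the statement. Each $\alpha_{i_{1}\dots i_{p}}$ is a character of $G$, being a product of the diagonal characters $\alpha_{i_{j}}$ of ${\bf T}$ composed with ${\rm Ad}_{s}$. If the restriction $\alpha_{i_{1}\dots i_{p}\vert_{\Gamma}}$ is trivial, then $\alpha_{i_{1}\dots i_{p}}$ factors through the compact quotient $G/\Gamma$, so its image $\alpha_{i_{1}\dots i_{p}}(G)=\alpha_{i_{1}\dots i_{p}}(G/\Gamma)$ is a compact subgroup of $\C^{\ast}$, which forces $\alpha_{i_{1}\dots i_{p}}$ to be unitary.

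Next I would argue by contraposition to obtain $(C_{G,\Gamma})$. Suppose $(C_{G,\Gamma})$ failed: there would then exist an index set with $\alpha_{i_{1}\dots i_{p}}$ non-trivial but $\alpha_{i_{1}\dots i_{p}\vert_{\Gamma}}$ trivial. By the fact just recorded, $\alpha_{i_{1}\dots i_{p}}$ would be unitary, and being non-trivial it would be a \emph{non-trivial unitary character}, contradicting $(D_{G})$. Hence $(D_{G})$ implies $(C_{G,\Gamma})$, and Corollary~\ref{MOSEX} yields the desired isomorphism.

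I do not expect any serious obstacle here: the argument is a purely formal reduction, and the only points demanding care are the logical reading of the phrase ``not a non-trivial unitary character'' (meaning trivial or non-unitary) and the compactness-implies-unitary step for characters of $G$ that descend to $G/\Gamma$. All of the substantive work — the identification of the ${\bf T}$-invariant subcomplex $\left(\bigoplus_{\alpha\in A_{\Gamma}}\bigwedge\g^{\ast}_{\C}\otimes V_{\alpha}\right)^{\bf T}$ with $\left(\bigwedge\g^{\ast}_{\C}\right)^{\bf T}$ and the resulting cohomology isomorphism under $(C_{G,\Gamma})$ — has already been carried out in Corollary~\ref{MOSEX}, so this final statement is essentially a repackaging of that result under a hypothesis intrinsic to $G$.
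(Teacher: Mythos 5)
Your proposal is correct and matches the paper's own argument exactly: the paper likewise notes that a character of $G$ whose restriction to $\Gamma$ is trivial has compact image $\alpha(G)=\alpha(G/\Gamma)$ and is therefore unitary, so condition $(D_{G})$ implies condition $(C_{G,\Gamma})$ and the result follows from the preceding corollary. The only cosmetic difference is that you phrase the reduction by contraposition, while the paper states it directly; the mathematical content is identical.
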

Since the condition  $(D_{ G})$ does not concern with $\Gamma$, this corollary is more useful than the above corollary.
Clearly a completely solvable Lie group satisfies the condition $(D_{ G})$.
Hence this corollary is a generalization of Hattori's result in \cite{Hatt}.

\begin{example}
Let $G=\R^{s}\ltimes _{\phi}(\R^{s}\times \C) $ such that 
\[\phi(t_{1},\dots ,t_{s})=\left(
\begin{array}{ccccc}
e^{t_{1}}&  0&  \dots&0&0  \\
0&     \ddots &\ddots   &\vdots&\vdots  \\
\vdots& \ddots &e^{t_{s}}&0&0 \\
0&\dots&0& e^{-\frac{1}{2}(t_{1}+\dots t_{s})}\cos\varphi &-e^{-\frac{1}{2}(t_{1}+\dots t_{s})}\sin\varphi \\
0 &\dots &0& e^{-\frac{1}{2}(t_{1}+\dots t_{s})}\sin\varphi& e^{-\frac{1}{2}(t_{1}+\dots t_{s})}\cos\varphi  
\end{array}
\right),
\]
where $\varphi=c_{1}t_{1}+\dots +c_{s}t_{s}$.
Then a diagonalization of ${\rm Ad}_{s}$ is given by
\[{\rm Ad}_{s}=diag(e^{t_{1}},\dots, e^{t_{s}}, e^{-\frac{1}{2}(t_{1}+\dots t_{s})+\varphi\sqrt{-1}}, e^{-\frac{1}{2}(t_{1}+\dots t_{s})-\varphi\sqrt{-1}},1,\dots,1).
\]
By this, $G$ satisfies the condition $(D_{G})$ for any.
\begin{proposition}
For any lattice $\Gamma$, we have $b_{p}(G/\Gamma)=b_{2s+2-p}(G/\Gamma)={}_s{\rm C}_{p}$ for $1\le p\le s$ and $b_{s+1}(G/\Gamma)=0$.
\end{proposition}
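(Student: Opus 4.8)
The plan is to reduce the untwisted Betti numbers $b_p(G/\Gamma)$ to the cohomology of the Chevalley--Eilenberg complex $\bigwedge\g^{\ast}_{\C}$ and to compute the latter through its weight decomposition under $\bf T$. First I would fix the basis of $\g_{\C}$ diagonalizing ${\rm Ad}_{s}$: write $A_{1},\dots,A_{s}$ for the abelian base $\R^{s}$ (trivial weight), $B_{1},\dots,B_{s}$ for $\R^{s}\subset\n$ with $[A_{j},B_{k}]=\delta_{jk}B_{k}$, and $W_{+},W_{-}$ for the two complex eigenlines of $\C$ with $[A_{j},W_{\pm}]=(-\tfrac12\pm\sqrt{-1}\,c_{j})W_{\pm}$, all remaining brackets vanishing. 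For the dual generators $a_{j},b_{k},w_{\pm}$ the differential is $da_{j}=0$, $db_{k}=-a_{k}\wedge b_{k}$ and $dw_{\pm}=-\lambda_{\pm}\wedge w_{\pm}$ with $\lambda_{\pm}=\sum_{j}(-\tfrac12\pm\sqrt{-1}\,c_{j})\,a_{j}$, and I record the identity $\sum_{k}a_{k}+\lambda_{+}+\lambda_{-}=0$ (this is just unimodularity of $\g$).

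Since the example already checks condition $(D_{G})$, for every lattice $\Gamma$ a nontrivial character $\alpha_{i_{1}\dots i_{p}}$ cannot restrict trivially to $\Gamma$ (otherwise it would be a nontrivial unitary character), so condition $(C_{G,\Gamma})$ of Corollary \ref{MOSEX} holds. Hence $H^{\ast}(G/\Gamma,\C)\cong H^{\ast}(\g_{\C})$, and, as in that proof via Lemma \ref{incso}, this is computed by the $\bf T$-invariant subcomplex $(\bigwedge\g^{\ast}_{\C})^{\bf T}$, i.e.\ the weight-zero part. A direct Leibniz computation gives $d(f_{1}\wedge\cdots\wedge f_{m})=-(\mu_{1}+\cdots+\mu_{m})\wedge f_{1}\wedge\cdots\wedge f_{m}$ for fiber generators $f_{i}$ with $df_{i}=-\mu_{i}\wedge f_{i}$, so a monomial is $\bf T$-invariant exactly when the sum of the weight $1$-forms of its fiber factors vanishes, and on such a monomial $d=0$ automatically. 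It therefore remains to count weight-zero monomials.

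The heart of the argument is the bookkeeping of weight-zero fiber monomials $\bigwedge_{k\in S}b_{k}$ possibly wedged with $w_{+}$ and/or $w_{-}$, encoded by $\epsilon_{\pm}\in\{0,1\}$. Collecting the coefficient of each $a_{j}$ in the total weight $\sum_{k\in S}a_{k}+\epsilon_{+}\lambda_{+}+\epsilon_{-}\lambda_{-}$, its real part equals $1-\tfrac12(\epsilon_{+}+\epsilon_{-})$ for $j\in S$ and $-\tfrac12(\epsilon_{+}+\epsilon_{-})$ for $j\notin S$; these all vanish only when $\epsilon_{+}+\epsilon_{-}\in\{0,2\}$, forcing $S=\emptyset$ or $S=\{1,\dots,s\}$ respectively, the case $\epsilon_{+}+\epsilon_{-}=1$ being impossible since it would require the value $\tfrac12$. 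In both surviving cases $\epsilon_{+}=\epsilon_{-}$, so the imaginary parts $(\epsilon_{+}-\epsilon_{-})c_{j}$ vanish as well; thus the only weight-zero fiber monomials are $1$ and $\Omega=b_{1}\wedge\cdots\wedge b_{s}\wedge w_{+}\wedge w_{-}$, the latter of degree $s+2$. Consequently $(\bigwedge\g^{\ast}_{\C})^{\bf T}=\bigwedge\langle a_{1},\dots,a_{s}\rangle\ \oplus\ \bigwedge\langle a_{1},\dots,a_{s}\rangle\wedge\Omega$. The first summand contributes $\binom{s}{p}$ in degree $0\le p\le s$; the second, a class $a_{J}\wedge\Omega$ with $|J|=q$, sits in degree $q+s+2$ with multiplicity $\binom{s}{q}$, so taking $q=s-p$ gives $b_{2s+2-p}=\binom{s}{s-p}=\binom{s}{p}$ for $1\le p\le s$. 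Degree $s+1$ receives nothing (the first summand stops at $s$, the second starts at $s+2$), so $b_{s+1}=0$, which is the claim.

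The step I expect to be the main obstacle is precisely this weight count: one must verify that no further fiber monomial is $\bf T$-invariant, and the delicate point is handling the rotation parameters $c_{j}$ alongside the scaling weight $-\tfrac12$. The clean observation that rescues the computation is that the real (scaling) parts already force $\epsilon_{+}+\epsilon_{-}$ to be even, so the rotation parts never have to be assumed generic; this also makes the answer, and the Poincar\'e-duality symmetry $b_{p}=b_{2s+2-p}$, independent of the particular $c_{j}$ and of the chosen lattice $\Gamma$.
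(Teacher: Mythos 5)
Your proof is correct and follows essentially the same route as the paper: deduce condition $(C_{G,\Gamma})$ from $(D_{G})$ (any character trivial on $\Gamma$ is unitary), reduce $H^{\ast}(G/\Gamma,\C)$ to the $\bf T$-invariant part of $\bigwedge \g^{\ast}_{\C}$ via Corollary \ref{MOSEX} and Lemma \ref{incso}, note the differential vanishes on invariants, and count weight-zero monomials. The only divergence is at the end: the paper computes invariants only in degrees $\le s+1$ and invokes Poincar\'e duality for the upper degrees, whereas you determine the full invariant algebra $\bigwedge\langle a_{1},\dots,a_{s}\rangle \oplus \bigwedge\langle a_{1},\dots,a_{s}\rangle\wedge\Omega$ explicitly (your weight bookkeeping with $\epsilon_{\pm}$ is accurate), so the symmetry $b_{p}=b_{2s+2-p}$ comes out of the count rather than being assumed.
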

\begin{proof}
For a coordinate $(t_{1},\dots t_{s},x_{1},\dots x_{s}, z)\in \R^{s}\ltimes _{\phi}(\R^{s}\times \C)$, the cochin complex $\bigwedge \g^{\ast}_{ \C}$  is generated by 
\[\{dt_{1},\dots ,dt_{s}, e^{-t_{1}}dx_{1},\dots, e^{-t_{s}}dx_{s}, e^{\frac{1}{2}(t_{1}+\dots t_{s})-\varphi\sqrt{-1}}dz, e^{\frac{1}{2}(t_{1}+\dots t_{s})+\varphi\sqrt{-1}}d{\bar z}\}.
\]
Since $G$ satisfies the condition $(D_{p, G})$, we have an isomorphism
\[ H^{p}(G/\Gamma,\C)\cong H^{\ast}\left(\left(\bigwedge^{p} \g_{ \C}^{\ast}\right)^{\bf T}\right).
\]
We have 
\[\left(\bigwedge^{p} \g_{ \C}^{\ast}\right)^{\bf T}=\langle dt_{i_{1}}\wedge\dots \wedge dt_{i_{p}}\vert 1\le i_{1}<\dots <i_{p}\le s\rangle\]
for $1\le p\le s$
and  $\left(\bigwedge^{s+1} \g_{ \C}^{\ast}\right)^{\bf T}=0$.
Since the restriction of the derivation on $\left(\bigwedge^{p} \g_{ \C}^{\ast}\right)^{\bf T}$ is $0$ for $1\le p\le s+1$, we have 
\[H^{\ast}\left(\left(\bigwedge^{p} \g_{ \C}^{\ast}\right)^{\bf T}\right)\cong \langle dt_{i_{1}}\wedge\dots \wedge dt_{i_{p}}\vert 1\le i_{1}<\dots <i_{p}\le s\rangle.\]
By the Poincar\'e duality, we have the proposition.
\end{proof}

We can construct a lattice of $G$ by using of number theory.
Let $K$ be a finite extension field of $\Q$ with the degree $s+2$($s>0$).
Suppose $K$ admits embeddings $\sigma_{1},\dots \sigma_{s},\sigma_{s+1}, \sigma_{s+2}$ into $\C$ such that $\sigma_{1},\dots ,\sigma_{s}$ are real embeddings and $\sigma_{s+1}, \sigma_{s+2}$ are complex ones satisfying $\sigma_{s+1}=\bar \sigma_{s+2}$. 
We can choose $K$ admitting such embeddings(see \cite{OT}).
Denote ${\mathcal O}_{K}$ the ring of algebraic integers of $K$, ${\mathcal O}_{K}^{\ast}$ the group of units in ${\mathcal O}_{K}$ and 
\[{\mathcal O}_{K}^{\ast\, +}=\{a\in {\mathcal O}_{K}^{\ast}: \sigma_{i}>0 \,\, {\rm for \,\,  all}\,\, 1\le i\le s\}.
\]  
Define $\sigma :{\mathcal O}_{K}\to \R^{s}\times \C$ by
\[\sigma(a)=(\sigma_{1}(a),\dots ,\sigma_{s}(a),\sigma_{s+1}(a))
\]
for $a\in {\mathcal O}_{K}$.
Then the image $\sigma({\mathcal O}_{K})$ is a lattice in $\R^{s}\times \C$.
We denote 
\begin{multline*}
\sigma(a)\cdot \sigma(b)\\
=(\sigma_{1}(a)\sigma_{1}(b),\dots ,\sigma_{s}(a)\sigma_{s}(b),\sigma_{s+1}(a)\sigma_{s+1}(b),\dots ,\sigma_{s+t}(a)\sigma_{s+t}(b))
\end{multline*}
for $a,b\in  {\mathcal O}_{K}$.
Define $l:{\mathcal O}_{K}^{\ast\, +}\to \R^{s+1}$ by 
\[l(a)=(\log \vert \sigma_{1}(a)\vert,\dots ,\log \vert \sigma_{s}(a)\vert , 2\log \vert \sigma_{s+1}(a)\vert)
\]
for $a\in {\mathcal O}_{K}^{\ast\, +}$.
Then by Dirichlet's units theorem, $l({\mathcal O}_{K}^{\ast\, +})$ is a lattice in the vector space $L=\{x\in \R^{s+1}\vert \sum_{i=1}^{s+1} x_{i}=0\}$.
By this we have a geometrical representation of the semi-direct product  $l({\mathcal O}_{K}^{\ast\, +})\ltimes_{\phi} \sigma({\mathcal O}_{K})$ with 
\[\phi(t_{1},\dots, t_{s+1})(\sigma(a))=\sigma(l^{-1}(t_{1},\dots, t_{s+1}))\cdot\sigma (a)
\]
for $(t_{1},\dots ,t_{s+1})\in l({\mathcal O}_{K}^{\ast\, +})$.
Since $l({\mathcal O}_{K}^{\ast\, +})$ and $\sigma({\mathcal O}_{K})$ are lattices of $L$ and $\R^{s}\times \C$ respectively, we have a extension $\bar \phi:L\to {\rm Aut} (\R^{s}\times \C) $ of $\phi$ and $l({\mathcal O}_{K}^{\ast\, +})\ltimes_{\phi} \sigma({\mathcal O}_{K})$ can be seen as a lattice of $L\ltimes_{\bar \phi} (\R^{s}\times \C)$.
Since we have $\phi(t_{1},\dots t_{s+1})=diag(e^{t_{1}},\dots, e^{t_{s}}, \sigma_{s+1}(l^{-1}(t_{1},\dots, t_{s+1})))$ and $\sigma_{s+1}$ is a complex embedding of $K$, 
for some $c_{1},\dots ,c_{s}\in \R$,  the Lie group $L\ltimes_{\bar \phi} (\R^{s}\times \C)$ is identified with the Lie group $G$ as above.
\begin{remark}
In \cite{OT}, for each $s$ Oeljeklaus and  Toma constructed a LCK (locally conformal K\"ahler) structure on the manifold $G/l({\mathcal O}_{K}^{\ast\, +})\ltimes_{\phi} \sigma({\mathcal O}_{K})$ and showed that for $s=2$ this LCK manifold is a counter example of Vaisman's conjecture(i.e. Every compact LCK manifold has odd odd Betti number).
By the above proposition, for $s=2m$ the Betti number $b_{p}=b_{4m+2-p}={}_{2m}C_{p}$ is even for odd number $1\le p < 2m$.
Hence for any even $s$, $G/l({\mathcal O}_{K}^{\ast\, +})\ltimes_{\phi} \sigma({\mathcal O}_{K})$ is also a counter example of Vaisman's conjecture.
\end{remark}
\end{example}

\ \\
{\bf  Acknowledgements.} 

The author would like to express his gratitude to   Toshitake Kohno for helpful suggestions and stimulating discussions.
He would also like to thank    Oliver Baues,  Keizo Hasegawa and Takumi Yamada for their active interests in this paper.
This research is supported by JSPS Research Fellowships for Young Scientists.
The author would like to express many thanks to the referee for his careful reading of the earlier version of manuscript with several important remarks, which lead to many improvements in the revised version.

\end{document}